\numberwithin{equation}{section}
\newtheorem{Theorem}{Theorem}[section]
\newtheorem{Lemma}[Theorem]{Lemma}
\theoremstyle{definition}
\newtheorem{Definition}[Theorem]{Definition}
\newtheorem{Example}[Theorem]{Example}
\newtheorem{Remark}[Theorem]{Remark} }
\begin{document}

\allowdisplaybreaks

\newcommand{\arXivNumber}{1511.00061}

\renewcommand{\thefootnote}{}

\renewcommand{\PaperNumber}{019}

\FirstPageHeading

\ShortArticleName{Lagrangian Mechanics and Reduction on Fibered Manifolds}

\ArticleName{Lagrangian Mechanics and Reduction\\on Fibered Manifolds\footnote{This paper is a~contribution to the Special Issue ``Gone Fishing''. The full collection is available at \href{http://www.emis.de/journals/SIGMA/gone-fishing2016.html}{http://www.emis.de/journals/SIGMA/gone-fishing2016.html}}}

\Author{Songhao LI, Ari STERN and Xiang TANG}

\AuthorNameForHeading{S.~Li, A.~Stern and X.~Tang}

\Address{Department of Mathematics, Washington University in St.~Louis,\\
One Brookings Drive, St.~Louis MO 63130-4899, USA}
\Email{\href{mailto:lisonghao@gmail.com}{lisonghao@gmail.com}, \href{mailto:stern@wustl.edu}{stern@wustl.edu}, \href{mailto:xtang@wustl.edu}{xtang@wustl.edu}}

\ArticleDates{Received October 05, 2016, in f\/inal form March 13, 2017; Published online March 22, 2017}

\Abstract{This paper develops a generalized formulation of Lagrangian mechanics on f\/ibered manifolds, together with a reduction theory for symmetries corresponding to Lie groupoid actions. As special cases, this theory includes not only Lagrangian reduction (including reduction by stages) for Lie group actions, but also classical Routh reduction, which we show is naturally posed in this f\/ibered setting. Along the way, we also develop some new results for Lagrangian mechanics on Lie algebroids, most notably a new, coordinate-free formulation of the equations of motion. Finally, we extend the foregoing to include f\/ibered and Lie algebroid generalizations of the Hamilton--Pontryagin principle of Yoshimura and Marsden, along with the associated reduction theory.}

\Keywords{Lagrangian mechanics; reduction; f\/ibered manifolds; Lie algebroids; Lie groupoids}

\Classification{70G45; 53D17; 37J15}

\renewcommand{\thefootnote}{\arabic{footnote}}
\setcounter{footnote}{0}

\section{Introduction}

The starting point for classical Lagrangian mechanics is a function $ L \colon T Q \rightarrow \mathbb{R} $, called the \emph{Lagrangian}, where $ T Q $ is the tangent bundle of a smooth conf\/iguration manifold~$Q$.

Yet, tangent bundles are hardly the only spaces on which one may wish to study Lagrangian mechanics. When $L$ is invariant with respect to certain symmetries, it is useful to perform \emph{Lagrangian reduction}: quotienting out the symmetries and thereby passing to a~smaller space than $ T Q $. For example, if a Lie group $G$ acts freely and properly on $Q$, then $ Q \rightarrow Q / G $ is a~principal f\/iber bundle; if $L$ is invariant with respect to the $G$-action, then one can def\/ine a~\emph{reduced Lagrangian} on the quotient $ T Q / G $ (cf.\ Marsden and Scheurle~\cite{MaSc1993b}, Cendra et al.~\cite{CeMaRa2001}). In particular, when $ Q = G $, the reduced Lagrangian is def\/ined on $ T G / G \cong \mathfrak{g} $, the Lie algebra of~$G$, and the reduction procedure is called \emph{Euler--Poincar\'e reduction} (cf.\ Marsden and Ratiu~\cite[Chapter~13]{MaRa1999}).

Unlike $ T Q $, the reduced spaces $ T Q / G $ and $ \mathfrak{g} $ are not tangent bundles~-- but all three are examples of \emph{Lie algebroids}. Beginning with a seminal paper of Weinstein~\cite{Weinstein1996}, and with particularly important follow-up work by Mart\'{\i}nez~\cite{Martinez2001,Martinez2007,Martinez2008}, this has driven the development of a more general theory of Lagrangian mechanics on Lie
algebroids. In this more general framework, reduction is associated with \emph{Lie algebroid morphisms}, of which the quotient map $ T Q \rightarrow T Q / G $ is a particular example. Since Lie algebroids form a category, the composition of two morphisms is again a morphism. As an important consequence, it is almost trivial to perform so-called \emph{reduction by stages}~-- applying a sequence of morphisms one at a time rather than all at once~-- whereas, without this framework, reduction by stages is considerably more dif\/f\/icult (Cendra et al.~\cite{CeMaRa2001}, Marsden et al.~\cite{MaMiOrPeRa2007}).

In this paper, we generalize the foregoing theory in a new direction, based on the observation that reduction from $ T Q $ to $ T Q / G $ is a~special case of a much more general construction, involving \emph{Lie groupoid} (rather than group) actions on \emph{fibered manifolds} (rather than ordinary manifolds). This includes not only Lagrangian reduction, but also the related theory of \emph{Routh reduction}, which we show is naturally posed in the language of f\/ibered manifolds. In the special case of a manifold trivially f\/ibered over a single point, i.e., an ordinary manifold, this reduces to the previously-studied cases. Along the way, we also develop some new results on Lagrangian mechanics on Lie algebroids~-- most notably a~new, coordinate-free formulation of the equations of motion, incorporating the notion of a Lie algebroid connection due to Crainic and Fernandes~\cite{CrFe2003}~-- and extend this theory to the Hamilton--Pontryagin principle of Yoshimura and Marsden~\cite{YoMa2006b}.

The paper is organized as follows:
\begin{itemize}\itemsep=0pt
\item In Section~\ref{sec:EL}, we begin by brief\/ly reviewing the classical formulation of Lagrangian mechanics on manifolds. We then def\/ine f\/ibered manifolds, together with appropriate spaces of vertical tangent vectors and paths, and show how Lagrangian mechanics may be generalized to this setting. As an application, we show that Routh reduction is naturally posed in the language of f\/ibered manifolds, where the classical Routhian is understood as a Lagrangian on an appropriate vertical bundle.

\item In Section~\ref{sec:algebroids}, we discuss Lagrangian mechanics on Lie algebroids. We call the associated equations of motion the \emph{Euler--Lagrange--Poincar\'e equations}, since they simultaneously generalize the Euler--Lagrange equations on $ T Q $, Euler--Poincar\'e equations on $ \mathfrak{g} $, and Lagrange--Poincar\'e equations on $ T Q / G $. We derive a new, coordinate-free formulation of these equations, which we show agrees with the local-coordinates expression previously obtained by Mart\'{\i}nez~\cite{Martinez2001}. Finally, we show that, since the vertical bundle of a f\/ibered manifold is a Lie algebroid, the theory of Section~\ref{sec:EL} can be interpreted in this light.

\item In Section~\ref{sec:reduction}, we employ the Lie algebroid toolkit of Section~\ref{sec:algebroids} to study Lagrangian reduction on f\/ibered
 manifolds by Lie groupoid actions, which we call \emph{Euler--Lagrange--Poincar\'e reduction}. In the special case where a Lie groupoid acts on itself by multiplication, we recover the theory of Lagrangian mechanics on its associated Lie algebroid.

\item Finally, in Section~\ref{sec:HP}, we generalize the \emph{Hamilton--Pontryagin} variational principle of Yoshimura and Marsden~\cite{YoMa2006b}, together with the associated reduction theory \cite{YoMa2007}, to f\/ibered manifolds with Lie groupoid symmetries.
\end{itemize}

\section{Lagrangian mechanics on f\/ibered manifolds}\label{sec:EL}

\subsection{Brief review of Lagrangian mechanics}\label{sec:review}

Let $Q$ be a smooth conf\/iguration manifold and $ L \colon T Q \rightarrow \mathbb{R} $ be a smooth function, called the Lagrangian, on its tangent bundle. There are three ways in which one can use $L$ to induce dynamics on $Q$.

The f\/irst, which we call the symplectic approach, begins by introducing the \emph{Legendre transform} (or f\/iber derivative) of~$L$, which is the bundle map $ \mathbb{F} L \colon T Q \rightarrow T ^\ast Q $ def\/ined f\/iberwise by
\begin{gather*}
 \bigl\langle \mathbb{F} _q L (v) , w \bigr\rangle = \frac{\mathrm{d}}{\mathrm{d}t} L ( v + t w ) \Bigr\rvert _{ t = 0 }, \qquad v, w \in T _q Q .
\end{gather*}
This is used to pull back the canonical symplectic form $ \omega \in \Omega ^2 ( T ^\ast Q ) $ to the \emph{Lagrangian $2$-form} $ \omega _L = ( \mathbb{F} L ) ^\ast \omega \in \Omega ^2 ( T Q ) $. The Lagrangian is said to be \emph{regular} if $ \mathbb{F} L $ is a~local bundle isomorphism; in this case, $ \omega _L $ is nondegenerate, so $ ( T Q , \omega _L ) $ is a symplectic manifold. The \emph{energy function} $ E _L \colon T Q \rightarrow \mathbb{R} $ associated to $L$ is
\begin{gather*}
 E _L (v) = \bigl\langle \mathbb{F} L (v) , v \bigr\rangle - L (v) ,
\end{gather*}
and the \emph{Lagrangian vector field} $ X _L \in \mathfrak{X} ( T Q ) $ is the vector f\/ield satisfying
\begin{gather*}
 i _{ X _L } \omega _L = \mathrm{d} E _L ,
\end{gather*}
where $ i _{ X _L } \omega _L = \omega _L ( X _L , \cdot ) $ is the interior product of $ X _L $ with $ \omega _L $. That is, $ X _L $ is the Hamiltonian vector f\/ield of $ E _L $ on the symplectic manifold $ ( T Q, \omega _L ) $. Finally, a $ C ^2 $ path $ q \colon I \rightarrow Q $ is called a~\emph{base integral curve} of $ X _L $ if its tangent prolongation $ ( q, \dot{q} ) \colon I \rightarrow T Q $ is an integral curve of~$ X _L $. (Here and henceforth, $I$ denotes the closed unit interval $ [0,1] $, but there is no loss of generality over any other closed interval $[a,b]$.)

The second, which we call the variational approach, begins with the \emph{action functional} $ S \colon \mathcal{P} (Q) \rightarrow \mathbb{R} $, def\/ined by the integral
\begin{gather*}
 S (q) = \int _0 ^1 L \bigl( q (t) , \dot{q} (t) \bigr) \,\mathrm{d}t ,
\end{gather*}
where $ \mathcal{P} (Q) $ denotes the Banach manifold of $ C ^2 $ paths $ q \colon I \rightarrow Q $. A path $q \in \mathcal{P} (Q) $ satisf\/ies \emph{Hamilton's variational principle} if it is a~critical point of $S$ restricted to paths with f\/ixed endpoints~$ q (0) $ and~$ q (1) $, i.e., if $ \mathrm{d} S (\delta q) = 0 $ for all variations $ \delta q \in T _q \mathcal{P} (Q) $ with $ \delta q (0) = 0 $ and $ \delta q (1) = 0 $.

The third and f\/inal approach considers the system of dif\/ferential equations that a solution to Hamilton's variational principle must satisfy. In local coordinates, assuming $ \delta q (0) = 0 $ and $ \delta q (1) = 0 $,
\begin{gather*}
 \mathrm{d} S ( \delta q ) = \int _0 ^1 \left( \frac{ \partial L }{ \partial q ^i } ( q, \dot{q} ) \delta q ^i + \frac{ \partial L }{ \partial \dot{q} ^i } ( q , \dot{q} ) \delta \dot{q} ^i \right) \mathrm{d}t
 = \int _0 ^1 \left( \frac{ \partial L }{ \partial q ^i } ( q, \dot{q} ) - \frac{\mathrm{d}}{\mathrm{d}t} \frac{ \partial L }{ \partial \dot{q}
 ^i } ( q, \dot{q} ) \right) \delta q ^i \,\mathrm{d}t .
\end{gather*}
(Here, we use the Einstein index convention, where there is an implicit sum on repeated indices.) Hence, this vanishes for all $ \delta q $ if and only if $q$ satisf\/ies the system of ordinary dif\/ferential equations
\begin{gather*}
 \frac{ \partial L }{ \partial q ^i } ( q, \dot{q} ) - \frac{\mathrm{d}}{\mathrm{d}t} \frac{ \partial L }{ \partial \dot{q} ^i } ( q, \dot{q} ) = 0,
\end{gather*}
which are called the \emph{Euler--Lagrange equations}.

The equivalence of these three approaches for regular Lagrangians~-- and of the latter two for arbitrary Lagrangians~-- is a~standard result in geometric mechanics. We state it now as a~theorem for later reference.

\begin{Theorem}\label{thm:lagrangian} If $ L \colon T Q \rightarrow \mathbb{R} $ is a regular Lagrangian and $ q \in \mathcal{P} (Q) $, then the following are equivalent:
\begin{enumerate}[label={\rm (\roman*)}]\itemsep=0pt
\item $q$ is a base integral curve of the Lagrangian vector field $ X _L \in \mathfrak{X} ( T Q ) $.

\item $q$ satisfies Hamilton's variational principle.

\item $q$ satisfies the Euler--Lagrange equations.
\end{enumerate}
 If regularity is dropped, then $(ii)\Leftrightarrow (iii)$ still holds.
\end{Theorem}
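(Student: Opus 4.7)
The plan is to handle (ii) $\Leftrightarrow$ (iii) first, since this is essentially contained in the local-coordinate computation of $\mathrm{d}S$ already displayed above the theorem and uses no regularity hypothesis. That computation shows, for variations $\delta q$ vanishing at the endpoints, $\mathrm{d}S(\delta q) = \int_0^1 \bigl( \partial L/\partial q^i - (\mathrm{d}/\mathrm{d}t)(\partial L/\partial \dot q^i) \bigr)\delta q^i\, \mathrm{d}t$; testing with bump functions compactly supported in the interior of $I$, the fundamental lemma of the calculus of variations forces the Euler--Lagrange equations to hold pointwise along $q$, and the converse is immediate. Since regularity is not invoked anywhere in this step, it also establishes the final sentence of the theorem.

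For (i) $\Leftrightarrow$ (iii), I would pass to local coordinates $(q^i,\dot q^i)$ on $TQ$ and perform a local analysis of $i_{X_L}\omega_L = \mathrm{d}E_L$. Starting from the Lagrangian one-form $\theta_L = (\partial L/\partial \dot q^i)\,\mathrm{d}q^i$ (the $\mathbb{F}L$-pullback of the canonical one-form on $T^*Q$), one computes $\omega_L = -\mathrm{d}\theta_L$ and $\mathrm{d}E_L$ explicitly in the coordinate basis. Writing any candidate vector field as $X_L = A^i\,\partial/\partial q^i + B^i\,\partial/\partial \dot q^i$ and expanding $i_{X_L}\omega_L = \mathrm{d}E_L$, the $\mathrm{d}\dot q^j$-coefficient yields $(\partial^2 L/\partial \dot q^i\partial \dot q^j)(A^i - \dot q^i) = 0$, and the $\mathrm{d}q^j$-coefficient, after substituting $A^i = \dot q^i$, yields $(\partial^2 L/\partial \dot q^i\partial \dot q^j)B^i + (\partial^2 L/\partial q^k\partial \dot q^j)\dot q^k = \partial L/\partial q^j$. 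Regularity makes the Hessian invertible, so the first relation forces the second-order (SODE) condition $A^i = \dot q^i$ and the second determines $B^i$ uniquely, giving a globally well-defined $X_L$ by patching on chart overlaps. Along a base integral curve one has $B^i = \ddot q^i$, so the second relation becomes $(\mathrm{d}/\mathrm{d}t)(\partial L/\partial \dot q^j) = \partial L/\partial q^j$ by the chain rule, which is the Euler--Lagrange system.

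The step I expect to be most delicate is verifying that the SODE condition $A^i = \dot q^i$ is genuinely \emph{forced} by $i_{X_L}\omega_L = \mathrm{d}E_L$ rather than being an extra hypothesis; this depends on the precise form $E_L = \langle \mathbb{F}L(v), v\rangle - L$, whose differential cancels terms in $\mathrm{d}\theta_L$ so that the $\mathrm{d}\dot q^j$-coefficient reduces exactly to the Hessian times $(A^i - \dot q^i)$. Without regularity, the Hessian degenerates, $X_L$ may fail to exist or be unique, and the SODE condition can fail along candidate integral curves — which is precisely why (i) must be omitted in the non-regular case, while the variational derivation keeps (ii) $\Leftrightarrow$ (iii) intact.
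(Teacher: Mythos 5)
Your proposal is correct, and it is essentially the standard argument: the paper itself does not write out a proof but simply cites Marsden and Ratiu~\cite[Theorem~8.1.3]{MaRa1999}, where exactly this two-part argument appears (the fundamental lemma of the calculus of variations for $(ii)\Leftrightarrow(iii)$, and the coordinate expansion of $i_{X_L}\omega_L=\mathrm{d}E_L$ with the Hessian forcing the second-order condition for $(i)\Leftrightarrow(iii)$ under regularity). Nothing in your sketch deviates from, or falls short of, the cited proof.
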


\begin{proof}
 See, e.g., Marsden and Ratiu~\cite[Theorem~8.1.3]{MaRa1999}.
\end{proof}

\subsection{Fibered manifolds}\label{sec:fibered}

We begin by giving the def\/inition of a f\/ibered manifold, along with its vertical and covertical bundles. These bundles generalize the tangent and cotangent bundles of an ordinary manifold, and they will play analogous roles in f\/ibered Lagrangian mechanics.

\begin{Definition}
A \emph{fibered manifold} $ Q \rightarrow M $ consists of a pair of smooth manifolds $ Q $, $M$, together with a surjective submersion $ \mu \colon Q \rightarrow M $.
\end{Definition}

\begin{Definition}
The \emph{vertical bundle} of $ Q \rightarrow M $ is $ V Q = \ker \mu _\ast $, where $ \mu _\ast \colon T Q \rightarrow T M $ is the pushforward (or tangent map) of $\mu$. The dual of $ V Q $ is denoted $ V ^\ast Q $, which we call the \emph{covertical bundle}.
\end{Definition}

\begin{Remark} \label{rmk:disjoint}
Since $\mu$ is a submersion, the f\/iber $ Q _x = \mu ^{-1} \bigl( \{ x \} \bigr) $ is a submanifold of $Q$ for each $ x \in M $. Therefore,
 \begin{gather*}
 V _q Q = T _q Q _{\mu (q) } , \qquad V Q = \bigsqcup _{ q \in Q } V_q Q = \bigsqcup _{ x \in M } T Q _x .
 \end{gather*}
In other words, $ V Q $ consists of vectors tangent to the f\/ibers, and hence is an integrable subbundle of $ T Q $. Similarly,
\begin{gather*}
 V ^\ast_q Q = T ^\ast _q Q _{ \mu (q) } , \qquad V ^\ast Q = \bigsqcup _{ q \in Q } V _q ^\ast Q = \bigsqcup _{ x \in M } T ^\ast Q _x ,
 \end{gather*}
so the covertical bundle consists of covectors to the individual f\/ibers.
\end{Remark}

\begin{Example} An ordinary smooth manifold $Q$ can be identif\/ied with the f\/ibered manifold $ Q \rightarrow \bullet $, where $ \bullet $ denotes the space with a single point. Because $ \mu _\ast $ is trivial, it follows that $ V Q = T Q $ and $ V ^\ast Q = T ^\ast Q $.
\end{Example}

\begin{Definition} The space of \emph{vertical vector fields} on $Q$ is $ \mathfrak{X} _V (Q) = \Gamma ( V Q ) $. The space of \emph{vertical $k$-forms} on $Q$ is $ \Omega _V ^k ( Q ) = \Gamma ( \bigwedge ^k V ^\ast Q ) $.
\end{Definition}

Since $ V Q $ is integrable, it follows that $ \mathfrak{X} _V ( Q ) $ is closed under the Jacobi--Lie bracket $ [ \cdot , \cdot ] $, i.e., $\mathfrak{X} _V (Q) $ is a Lie subalgebra of $ \mathfrak{X} (Q) $. Therefore, the following vertical exterior derivative operator on $ \Omega _V ^\bullet (Q) $ is well-def\/ined.

\begin{Definition} Given $ u \in \Omega ^k _V (Q) $, the \emph{vertical exterior derivative} $ \mathrm{d} _V u \in \Omega ^{ k + 1 } _V (Q) $
 is given by
\begin{gather*}
\mathrm{d} _V u ( X _0 , \ldots, X _k ) = \sum _{ i = 0 } ^k ( - 1 ) ^i X _i \bigl[ u ( X _0 , \ldots, \widehat{ X } _i , \ldots, X _k ) \bigr] \\
\hphantom{\mathrm{d} _V u ( X _0 , \ldots, X _k ) =}{} + \sum _{ 0 \leq i < j \leq k } ( - 1 ) ^{ i + j } \big( [X _i , X _j ], X _0 , \ldots, \widehat{ X } _i , \ldots, \widehat{ X } _j , \ldots, X _k \big) ,
 \end{gather*}
where $ X _0 , \ldots, X _k \in \mathfrak{X} _V (Q) $ are arbitrary vertical vector f\/ields, and where a hat over an argument indicates its omission.
\end{Definition}

\begin{Remark} From the characterization of $ V Q $ in Remark~\ref{rmk:disjoint}, it follows that $ X \in \mathfrak{X} _V (Q) $ restricts to an ordinary vector f\/ield $ X _x \in \mathfrak{X} ( Q _x ) $ on each f\/iber $Q _x$. Likewise, $ u \in \Omega ^k _V ( Q ) $ restricts to an ordinary $k$-form $ u _x \in \Omega ^k ( Q _x ) $ on each f\/iber $ Q _x $. Moreover, by the integrability of $V Q$, for any $ X, Y \in \mathfrak{X} _V (Q) $ and $ x \in M $ we have $ [ X, Y ] _x = [ X _x , Y _x ] \in \mathfrak{X} ( Q _x ) $. Hence, the vertical exterior derivative $ \mathrm{d} _V $ coincides with the ordinary exterior derivative $ \mathrm{d} _x \colon \Omega ^k ( Q _x ) \rightarrow \Omega ^{ k + 1 } ( Q _x ) $ on the f\/iber $ Q _x $.
\end{Remark}

Note that $ V Q $ and $ V ^\ast Q $ are also, themselves, f\/ibered manifolds over $M$. Specif\/ically, if $ \tau \colon V Q \rightarrow Q $ and $ \pi \colon V ^\ast Q \rightarrow Q $ are the bundle projections, then we have surjective submersions $ \mu \circ \tau \colon V Q \rightarrow M $ and
$ \mu \circ \pi \colon V ^\ast Q \rightarrow M $; the f\/ibers are given by $ (V Q) _x = T Q _x $ and $ (V ^\ast Q) _x = T ^\ast Q _x $. Now,
just as there is a tautological $1$-form and canonical symplectic $2$-form on $ T ^\ast Q $, there are corresponding vertical forms on $ V ^\ast Q $, constructed as follows.

\begin{Definition} The \emph{tautological vertical $1$-form} $ \theta \in \Omega ^1 _V ( V ^\ast Q ) $ is def\/ined by the condition $ \theta ( v ) = \langle p , \pi _\ast v \rangle $ for $ v \in V _p V ^\ast Q $. The \emph{canonical vertical $2$-form} is def\/ined by $ \omega = - \mathrm{d} _V \theta \in \Omega ^2 _V ( V ^\ast Q ) $.
\end{Definition}

\begin{Remark} Restricted to any f\/iber $ ( V ^\ast Q ) _x = T ^\ast Q _x $, it follows from the preceding remarks that $\theta$ and $\omega$ agree with the ordinary tautological $1$-form $\theta _x \in \Omega ^1 ( T ^\ast Q _x ) $ and canonical symplectic $2$-form $\omega _x \in \Omega ^2 ( T ^\ast Q _x ) $, respectively, on the cotangent bundle of the f\/iber. In particular, this implies that $ \omega $ is closed (with respect to $ \mathrm{d} _V $) and nondegenerate, since $ \omega _x $ is closed and nondegenerate for each $ x \in M $.
\end{Remark}

\subsection{Lagrangian mechanics on f\/ibered manifolds}

In this section, we show that the three approaches to Lagrangian mechanics of Section~\ref{sec:review} may be generalized to f\/ibered manifolds, with a corresponding generalization of Theorem~\ref{thm:lagrangian}. Let the Lagrangian be a smooth function $ L \colon V Q \rightarrow \mathbb{R} $.

\begin{Definition} The \emph{Legendre transform} (or f\/iber derivative) of $L$ is the bundle map $ \mathbb{F} L \colon V Q \rightarrow V ^\ast Q $,
 def\/ined for each $ q \in Q $ by
 \begin{gather*}
 \bigl\langle \mathbb{F} _q L (v) , w \bigr\rangle = \frac{\mathrm{d}}{\mathrm{d}t} L ( v + t w ) \Bigr\rvert _{ t = 0 }, \qquad v, w \in V _q Q .
 \end{gather*}
We say that $L$ is \emph{regular} if $ \mathbb{F} L $ is a local bundle isomorphism.
\end{Definition}

\begin{Remark} Since $ ( V Q ) _x = T Q _x $, we can def\/ine a f\/iber-restricted Lagrangian $ L _x \colon T Q _x \rightarrow \mathbb{R} $, whose ordinary Legendre transform $ \mathbb{F} L _x \colon T Q _x \rightarrow T ^\ast Q _x $ coincides with the restriction $ \mathbb{F} L \rvert _{ (V Q) _x } $. It is therefore useful to think of $L$ as a smoothly varying family of ordinary Lagrangians $ L _x $, parametrized by $ x \in M $.
\end{Remark}

Now, $ \mathbb{F} L $ maps f\/ibers to f\/ibers (i.e., it is a morphism of f\/ibered manifolds over $M$), so its pushforward maps vertical vectors
to vertical vectors, and we may write $( \mathbb{F} L ) _\ast \colon V V Q \rightarrow V V ^\ast Q $. This also gives a well-def\/ined pullback of vertical forms $ ( \mathbb{F} L ) ^\ast \colon \Omega ^k _V ( V ^\ast Q ) \rightarrow \Omega ^k _V ( V Q ) $, which leads to the following vertical versions of the Lagrangian $2$-form and Lagrangian vector f\/ield.

\begin{Definition} The \emph{Lagrangian vertical $2$-form} is $ \omega _L = ( \mathbb{F} L ) ^\ast \omega \in \Omega ^2 _V ( V Q )
 $. The \emph{Lagrangian vertical vector field} $ X _L \in \mathfrak{X} _V ( V Q ) $ is the vertical vector f\/ield satisfying
 \begin{gather*}
 i _{ X _L } \omega _L = \mathrm{d} _V E _L ,
 \end{gather*}
where the energy function $ E _L \colon V Q \rightarrow \mathbb{R} $ is given by $E _L (v) = \bigl\langle \mathbb{F} L (v) , v \bigr\rangle - L (v)$.
\end{Definition}

\begin{Remark} Restricting to the f\/iber over $ x \in M $, we have
\begin{gather*}
 ( \omega _L ) _x = ( \mathbb{F} L _x ) ^\ast \omega _x = \omega _{L _x} \in \Omega ^2 ( T Q _x ) ,
\end{gather*}
i.e., the ordinary Lagrangian $2$-form for $ L _x $ on $ T Q _x $, and moreover
\begin{gather*}
 ( E _L ) _x (v) = \bigl\langle \mathbb{F} L _x (v) , v \bigr\rangle- L _x (v) = E _{ L _x } (v) , \qquad v \in T Q _x ,
\end{gather*}
so $ E _L $ restricts to $ E _{ L _x } $. Combining these, it follows that
\begin{gather*}
 i _{ (X _L) _x } \omega _{ L _x } = \mathrm{d} _x E _{ L _x } ,
\end{gather*}
so we conclude that $ ( X _L ) _x = X _{ L _x } $, i.e., $ X _L $ coincides with the ordinary Lagrangian vector f\/ield on each f\/iber.
\end{Remark}

Next, for the variational approach, we begin by def\/ining an appropriate space of vertical paths on which the action functional will be def\/ined, as well as an appropriate space of variations of these paths.

\begin{Definition} The space of \emph{$ C ^2 $ vertical paths}, denoted by $ \mathcal{P} _V (Q) \subset \mathcal{P} (Q) $, consists of $ q \in \mathcal{P} (Q) $ whose tangent prolongation satisf\/ies $ \bigl( q (t) , \dot{q} (t) \bigr) \in V Q $ for all $ t \in I $. The \emph{action functional} $ S \colon \mathcal{P} _V (Q) \rightarrow \mathbb{R} $ is then
 \begin{gather*}
 S (q) = \int _0 ^1 L \bigl( q (t) , \dot{q} (t) \bigr) \,\mathrm{d}t ,
 \end{gather*}
which is well-def\/ined for $ L \colon V Q \rightarrow \mathbb{R} $ since $ \bigl( q (t) , \dot{q} (t) \bigr) \in V Q $.
\end{Definition}

\begin{Remark} For $ q \in \mathcal{P} _V (Q) $, the condition $ \bigl( q (t) , \dot{q} (t) \bigr) \in V Q $ implies
 \begin{gather*}
 \frac{\mathrm{d}}{\mathrm{d}t} \mu \bigl( q (t) \bigr) = \mu _\ast \bigl( q (t) , \dot{q} (t) \bigr) = 0 .
 \end{gather*}
Hence, $ \mu \bigl( q (t) \bigr) $ is constant in $t$, so $q$ lies in a single f\/iber $ Q _x $, i.e., $ q \in \mathcal{P} ( Q _x ) $ for some $ x \in M $. It follows that $ S (q) = S _x ( q ) $, where $ S _x \colon \mathcal{P} ( Q _x ) \rightarrow \mathbb{R} $ is the ordinary action associated to the f\/iber-restricted Lagrangian $ L _x $. Moreover, since $ \mu \bigl( q (t) \bigr) $ is constant in~$t$, there is an associated f\/ibered (Banach) manifold structure $ \mathcal{P} _V (Q) \rightarrow M $, with $ \mathcal{P} _V (Q) _x = \mathcal{P} ( Q _x ) $.
\end{Remark}

\begin{Definition} \label{def:verticalHamilton}
An element $ \delta q \in V _q \mathcal{P} _V (Q) $ is called a~\emph{vertical variation} of $ q \in \mathcal{P} _V (Q) $. The path~$q$ satisf\/ies \emph{Hamilton's variational principle for vertical paths} if $q$ is a critical point of $S$ relative to paths with f\/ixed endpoints, i.e., if $ \mathrm{d} S ( \delta q ) = 0 $ for all vertical variations $ \delta q $ with $ \delta q (0) = 0 $ and $ \delta q (1) = 0 $.
\end{Definition}

\begin{Remark} Since $ \mathcal{P} _V (Q) _x = \mathcal{P} ( Q _x ) $ and $ V _q \mathcal{P} _V (Q) = T _q \mathcal{P} ( Q _x ) $, this is immediately equivalent to $q \in \mathcal{P} _V (Q) _x $ satisfying the ordinary form of Hamilton's variational principle for the f\/iber-restricted Lagrangian $ L _x $.
\end{Remark}

Having def\/ined vertical versions of the symplectic and variational approaches to Lagrangian mechanics, we f\/inally derive the corresponding Euler--Lagrange equations. Suppose that $ q = ( x ^\sigma , y ^i ) $ are f\/iber-adapted local coordinates for~$Q$. Since vertical variations satisfy $ \delta x ^\sigma = 0 $, by def\/inition, arbitrary f\/ixed-endpoint variations of the action functional are given by
\begin{gather*}
 \mathrm{d} S ( \delta q )= \int _0 ^1 \left( \frac{ \partial L }{ \partial y ^i } ( q, \dot{q} ) \delta y ^i + \frac{ \partial L }{ \partial \dot{y} ^i } ( q, \dot{q} ) \delta \dot{y} ^i \right) \mathrm{d}t = \int _0 ^1 \left( \frac{ \partial L }{ \partial y ^i } ( q, \dot{q} ) - \frac{\mathrm{d}}{\mathrm{d}t} \frac{ \partial L }{ \partial \dot{y} ^i } ( q, \dot{q} ) \right) \delta y ^i
 \,\mathrm{d}t .
\end{gather*}
Therefore, a critical vertical path must have the integrand above vanish, in addition to the vertical path condition. This motivates the following def\/inition.

\begin{Definition}
 In f\/iber-adapted local coordinates $ q = ( x ^\sigma , y ^i ) $ on $Q \rightarrow M $, the \emph{vertical Euler--Lagrange equations} for $ L \colon V Q \rightarrow \mathbb{R} $ are
 \begin{gather} \label{eqn:verticalEL}
 \dot{x} ^\sigma = 0 , \qquad \frac{ \partial L }{ \partial y ^i } ( q, \dot{q} ) - \frac{\mathrm{d}}{\mathrm{d}t} \frac{ \partial L }{ \partial \dot{y}
 ^i } ( q, \dot{q} ) = 0 .
\end{gather}
\end{Definition}

\begin{Remark} Since $ q = ( x ^\sigma , y ^i ) $ are f\/iber-adapted local coordinates, $ y ^i $ gives local coordinates for the f\/iber $ Q _x $, and we may write $ L ( q, \dot{q} ) = L _x ( y , \dot{y} ) $. (Note that $L$ is def\/ined only on vertical tangent vectors, so $ \dot{x} $ is not required.) Therefore, the vertical Euler--Lagrange equations are equivalent to the ordinary Euler--Lagrange equations,
 \begin{gather*}
 \frac{ \partial L _x }{ \partial y ^i } ( y , \dot{y} ) - \frac{\mathrm{d}}{\mathrm{d}t} \frac{ \partial L _x }{ \partial \dot{y} ^i } ( y , \dot{y} ) = 0 ,
 \end{gather*}
 for the f\/iber-restricted Lagrangian $ L _x $.
\end{Remark}

We are now prepared to state the generalization of Theorem~\ref{thm:lagrangian} to Lagrangian mechanics on f\/ibered manifolds.

\begin{Theorem} \label{thm:fiberedLagrangian}
If $ L \colon V Q \rightarrow \mathbb{R} $ is a regular Lagrangian on a fibered manifold $ \mu \colon Q \rightarrow M $, and $ q \in \mathcal{P} _V (Q) $ is a vertical $ C ^2 $ path over $ x \in M $, then the following are equivalent:
\begin{enumerate}[label={\rm (}\roman*{\rm )}]\itemsep=0pt
\item $q$ is a base integral curve of the Lagrangian vector field $ X _L \in \mathfrak{X} _V ( V Q ) $.
\item $q$ satisfies Hamilton's variational principle for vertical paths.
\item $q$ satisfies the vertical Euler--Lagrange equations.
\end{enumerate}
\begin{enumerate}[label={\rm (}\roman*$\,{}^\prime${\rm )}]\itemsep=0pt
\item $q$ is a base integral curve of the fiber-restricted Lagrangian vector field $ X _{ L _x } \in \mathfrak{X} ( T Q _x ) $.
\item $q$ satisfies Hamilton's variational principle with respect to the fiber-restricted Lagrangian $ L _x $.
\item $q$ satisfies the Euler--Lagrange equations with respect to the fiber-restricted Lagrangian $ L _x $.
\end{enumerate}
If regularity is dropped, then $ ( ii ) \Leftrightarrow ( iii ) \Leftrightarrow ( ii ^\prime )\Leftrightarrow ( iii ^\prime ) $ still holds.
\end{Theorem}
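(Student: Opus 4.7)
The plan is to reduce the entire statement to the classical Theorem~\ref{thm:lagrangian} applied fiberwise. Every ``vertical'' object introduced in this subsection has, by one of the preceding remarks, been shown to restrict on the fiber over $x$ to its classical counterpart for $L_x$ on $Q_x$, so the proof is essentially bookkeeping rather than new analysis.

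First, since $q \in \mathcal{P}_V(Q)$ is a vertical path, the remark following the definition of the action functional shows that $\mu \circ q$ is constant, so $q \in \mathcal{P}(Q_x)$ for a single $x \in M$. This places every object under consideration on the ordinary manifold $Q_x$, where Theorem~\ref{thm:lagrangian} is already available.

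I would then establish the three primed/unprimed equivalences in order. For $(i) \Leftrightarrow (i')$, the remark after the definition of $X_L$ gives $(X_L)_x = X_{L_x}$, and the tangent prolongation of $q$ lies in $(VQ)_x = TQ_x$, so $(q,\dot q)$ is an integral curve of $X_L$ if and only if it is an integral curve of $X_{L_x}$. For $(ii) \Leftrightarrow (ii')$, the remark following Definition~\ref{def:verticalHamilton} identifies $V_q\mathcal{P}_V(Q)$ with $T_q\mathcal{P}(Q_x)$ and $S(q)$ with $S_x(q)$, so vertical variations with fixed endpoints are literally the same objects as variations in $\mathcal{P}(Q_x)$ with fixed endpoints, and the two variational principles coincide. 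For $(iii) \Leftrightarrow (iii')$, the remark immediately preceding the theorem observes that $\dot x^\sigma = 0$ is automatic for $q \in \mathcal{P}_V(Q)$, while the remaining component of~\eqref{eqn:verticalEL} is the classical Euler--Lagrange equation for $L_x$ in the coordinates $y^i$ on $Q_x$.

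Finally, regularity of $L$ means $\mathbb{F} L$ is a local bundle isomorphism, and since $\mathbb{F} L\rvert_{(VQ)_x} = \mathbb{F} L_x$ by the remark following the definition of the Legendre transform, each $L_x$ is regular on $Q_x$. Applying Theorem~\ref{thm:lagrangian} to $L_x$ then yields $(i') \Leftrightarrow (ii') \Leftrightarrow (iii')$, and $(ii') \Leftrightarrow (iii')$ alone if regularity is dropped. Combined with the three primed/unprimed equivalences above, this gives the full chain. There is no genuine obstacle: the entire definitional framework of Section~\ref{sec:EL} was set up precisely so that verticality reduces to the classical setting on each fiber, and the proof amounts to a short sequence of citations to the preceding remarks together with one invocation of Theorem~\ref{thm:lagrangian}.
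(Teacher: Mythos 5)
Your proposal is correct and follows essentially the same route as the paper: reduce each unprimed statement to its primed counterpart via the fiberwise identifications established in the preceding remarks, then invoke Theorem~\ref{thm:lagrangian} for the fiber-restricted Lagrangian $L_x$. Your explicit check that regularity of $L$ implies regularity of each $L_x$ is a small but welcome addition that the paper leaves implicit.
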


\begin{proof}
We have seen, in the foregoing discussion, that $(i)$ $ \Leftrightarrow $ $(i ^\prime)$ for regular Lagrangians, while $(ii)$ $ \Leftrightarrow $ $(ii ^\prime)$ and $(iii)$ $ \Leftrightarrow $ $(iii ^\prime)$ hold in general. Hence, it suf\/f\/ices to show $(i ^\prime)$ $ \Leftrightarrow $ $(ii ^\prime)$ $ \Leftrightarrow $ $(iii ^\prime)$ for the regular case and $(ii ^\prime)$ $ \Leftrightarrow $ $(iii ^\prime)$ for the general case~-- but this is simply Theorem~\ref{thm:lagrangian} applied to~$L _x$.
\end{proof}

\subsection{Application: classical Routh reduction as f\/ibered mechanics}

The technique known as \emph{Routh reduction} traces its origins as far back as the 1860 treatise of Routh~\cite{Routh1860}. Modern geometric accounts have been given by Arnold et al.~\cite{ArKoNe1988}, Marsden and Scheurle~\cite{MaSc1993}, and Marsden et al.~\cite{MaRaSc2000}, with the latter two works developing a~more general theory of \emph{nonabelian Routh reduction}.

The essence of Routh reduction, as we will show, is that it passes from a Lagrangian on an ordinary manifold to an equivalent Lagrangian, known as the \emph{Routhian}, on a f\/ibered manifold. Since the resulting dynamics are conf\/ined to the vertical components (i.e., restricted to individual f\/ibers), this reduces the size of the original system by eliminating the horizontal components.

Consider a conf\/iguration manifold of the form $ \mathbb{T} ^n \times \mathcal{S} $, where $ \mathbb{T} ^n $ denotes the $n$-torus and $\mathcal{S}$ is a manifold called the \emph{shape space}. Let $ \theta ^\sigma $ and $ y ^i $ be local coordinates for $ \mathbb{T} ^n $ and $\mathcal{S}$, respectively, and suppose the Lagrangian $ L \colon T ( \mathbb{T} ^n \times S ) \rightarrow \mathbb{R} $ is \emph{cyclic} in the variables $ \theta ^\sigma $, i.e.,
$L = L ( \dot{\theta} , y , \dot{y} ) $ depends only on $ \dot{ \theta } $ but not on $ \theta $ itself. Then the $ \theta ^\sigma $ components of the Euler--Lagrange equations imply that
\begin{gather*}
 \frac{\mathrm{d}}{\mathrm{d}t} \frac{ \partial L }{ \partial \dot{\theta} ^\sigma } = \frac{ \partial L }{ \partial \theta
 ^\sigma } = 0,
\end{gather*}
so $ x _\sigma = \partial L / \partial \dot{\theta} ^\sigma $ is constant in $t$.

Now, def\/ine the f\/ibered manifold $ Q = \mathbb{R}^n \times \mathcal{S} $ with $ M = \mathbb{R}^n $, where $ \mu \colon Q \rightarrow M $ is simply the projection onto the $\mathbb{R}^n$ component, so that $ V Q = \mathbb{R}^n \times T \mathcal{S} $. The \emph{classical Routhian} $ R \colon V Q \rightarrow \mathbb{R} $ is
\begin{gather} \label{eqn:classicalRouthian}
 R ( x, y , \dot{y} ) = \bigl[ L ( \dot{\theta} , y, \dot{y} ) - x _\sigma \dot{\theta} ^\sigma \bigr] _{ x _\sigma = \partial L / \partial \dot{\theta} ^\sigma } ,
\end{gather}
where each $ \dot{\theta} ^\sigma $ is determined implicitly by the constraint $ x _\sigma = \partial L / \partial \dot{\theta} ^\sigma $. Considering $R$ as a~Lagrangian in the sense of the previous section, the vertical Euler--Lagrange equations consist of the vertical path condition,
\begin{gather*}
 0 = \dot{x} _\sigma = \frac{\mathrm{d}}{\mathrm{d}t} \frac{ \partial L }{ \partial \dot{\theta} ^\sigma },
\end{gather*}
and
\begin{gather*}
 0 = \frac{ \partial R }{ \partial y ^i } ( x, y , \dot{y} ) - \frac{\mathrm{d}}{\mathrm{d}t} \frac{ \partial R }{ \partial
 \dot{y} ^i } ( x, y , \dot{y} ) \\
\hphantom{0}{}
= \left( \frac{ \partial L }{ \partial y ^i } + \frac{ \partial L }{ \partial \dot{\theta} ^\sigma } \frac{ \partial
 \dot{\theta} ^\sigma }{ \partial y ^i } - x _\sigma
 \frac{ \partial \dot{\theta} ^\sigma }{ \partial y ^i } \right) -\frac{\mathrm{d}}{\mathrm{d}t} \left( \frac{ \partial L
 }{ \partial \dot{y} ^i } + \frac{ \partial L }{ \partial \dot{\theta} ^\sigma } \frac{ \partial \dot{\theta} ^\sigma }{ \partial \dot{y} ^i } -
 x _\sigma \frac{ \partial \dot{\theta} }{ \partial \dot{y} ^i } \right) \\
\hphantom{0}{} = \frac{ \partial L }{ \partial y ^i } - \frac{\mathrm{d}}{\mathrm{d}t} \frac{ \partial L
 }{ \partial \dot{y} ^i },
\end{gather*}
where the last step uses $ x _\sigma = \partial L / \partial \dot{\theta} ^\sigma $ to eliminate the last two terms from each parenthetical expression.

Thus, the ordinary Euler--Lagrange equations for $L$ are precisely equivalent to the vertical Euler--Lagrange equations for $R$. This reduces the dynamics from $\mathbb{T} ^n \times \mathcal{S} $ to those on the individual f\/ibers $ Q _x \cong \mathcal{S} $, thereby eliminating the cyclic variables $\theta \in \mathbb{T} ^n $. We now summarize this result as a theorem.

\begin{Theorem} Suppose $ L \colon T ( \mathbb{T} ^n \times \mathcal{S} ) \rightarrow \mathbb{R} $ is an ordinary Lagrangian that is cyclic in the $ \mathbb{T} ^n $ components, and let the classical Routhian $ R \colon V ( \mathbb{R}^n \times \mathcal{S} ) \rightarrow \mathbb{R} $ be the fibered Lagrangian defined in~\eqref{eqn:classicalRouthian}. Then $ ( \theta , y ) \in \mathcal{P} ( \mathbb{T} ^n \times S ) $ is a~solution path for $L$ if and only if $ ( x, y ) \in \mathcal{P} _V ( \mathbb{R} ^n \times \mathcal{S} ) $ is a vertical solution path for $R$.
\end{Theorem}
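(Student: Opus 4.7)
The plan is to reduce the theorem to the equivalence of Euler--Lagrange equations already spelled out in the preceding computation, invoking Theorem~\ref{thm:lagrangian} for $L$ and Theorem~\ref{thm:fiberedLagrangian} for $R$. By the former, $(\theta,y) \in \mathcal{P}(\mathbb{T}^n \times \mathcal{S})$ is a solution path for $L$ if and only if it satisfies the full Euler--Lagrange system; by the latter, $(x,y) \in \mathcal{P}_V(\mathbb{R}^n \times \mathcal{S})$ is a vertical solution path for $R$ if and only if it satisfies the vertical Euler--Lagrange equations \eqref{eqn:verticalEL}. Matching the two systems then suffices.

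Next, I would do the comparison component by component. The cyclic hypothesis reduces the $\theta^\sigma$-components of the Euler--Lagrange equations for $L$ to $\frac{\mathrm{d}}{\mathrm{d}t}(\partial L/\partial \dot\theta^\sigma) = 0$, i.e., to conservation of the momenta $x_\sigma = \partial L/\partial \dot\theta^\sigma$; this is exactly the vertical path condition $\dot x_\sigma = 0$ for $R$. Conversely, from a vertical path $(x,y)$ with $\dot x_\sigma = 0$ one recovers a path $(\theta,y)$ on $\mathbb{T}^n \times \mathcal{S}$ by inverting $x_\sigma = \partial L/\partial \dot\theta^\sigma$ to obtain $\dot\theta^\sigma$ as a function of $(x,y,\dot y)$ and then integrating; the choice of $\theta(0)$ is free but immaterial since $L$ is cyclic. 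The $y^i$-components then match by the cancellation already performed in the excerpt: the three-term expansions of $\partial R/\partial y^i$ and $\partial R/\partial \dot y^i$ collapse to $\partial L/\partial y^i$ and $\partial L/\partial \dot y^i$ after substituting $x_\sigma = \partial L/\partial \dot\theta^\sigma$. Run in both directions, this gives the desired equivalence.

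The main obstacle, to my mind, is not any single analytic step but ensuring the Routhian $R$ is well-defined in the first place. This requires a partial regularity hypothesis, namely that the Hessian $\partial^2 L/\partial \dot\theta^\sigma \partial \dot\theta^\tau$ is nondegenerate, so that the implicit function theorem lets one solve $x_\sigma = \partial L/\partial \dot\theta^\sigma$ smoothly for $\dot\theta^\sigma(x,y,\dot y)$ and makes the Legendre-type transform in~\eqref{eqn:classicalRouthian} meaningful. This regularity is implicit in the statement and should be acknowledged explicitly before carrying out the symbolic manipulations; once it is in hand, the remainder is the chain-rule calculation already exhibited in the excerpt, read in both directions.
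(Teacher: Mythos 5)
Your proposal is correct and follows essentially the same route as the paper: the paper's proof is precisely ``apply Theorem~\ref{thm:fiberedLagrangian} together with the foregoing calculations,'' i.e., the component-by-component matching of the cyclic momenta $x_\sigma = \partial L/\partial\dot\theta^\sigma$ with the vertical path condition and the collapse of the $y^i$-equations for $R$ to those for $L$, exactly as you describe. Your added remark that the nondegeneracy of $\partial^2 L/\partial\dot\theta^\sigma\partial\dot\theta^\tau$ is needed for $R$ to be well defined is a fair observation about a hypothesis the paper leaves implicit in the phrase ``determined implicitly by the constraint,'' but it does not alter the argument.
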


\begin{proof}
This follows from Theorem~\ref{thm:fiberedLagrangian}, together with the foregoing calculations.
\end{proof}

\section{Lagrangian mechanics on Lie algebroids}\label{sec:algebroids}

In this section, we lay the groundwork for reduction theory on f\/ibered manifolds, which will be discussed in Section~\ref{sec:reduction}. In ordinary Lagrangian reduction, we pass from the tangent bundle $ T Q $ to the quotient $ T Q / G $, which is generally not a tangent bundle. Likewise, in Section~\ref{sec:reduction}, we will pass from vertical bundles to quotients that are generally not vertical bundles. However, $ T Q $ and $ T Q / G $~-- as well as their vertical analogs, as we will show~-- are all examples of more general objects called \emph{Lie algebroids}, on which Lagrangian mechanics can be studied. The study of Lagrangian mechanics on Lie algebroids was largely pioneered by Weinstein~\cite{Weinstein1996}, and important follow-up work was done by Mart\'{\i}nez~\cite{Martinez2001,Martinez2007,Martinez2008} and several others in more recent years; see also Cort\'es et al.~\cite{CoLeMaMaMa2006}, Cort\'es and Mart\'{\i}nez~\cite{CoMa2004}, Grabowska and Grabowski~\cite{GrGr2008}, Grabowska et al.~\cite{GrUrGr2006}, Iglesias et al.~\cite{IgMaMaMa2008, IgMaMaSo2008}.

In addition to recalling some of the key results (particularly of Weinstein~\cite{Weinstein1996} and Mart\'{\i}nez~\cite{Martinez2001}) that we will need for the subsequent reduction theory, we also develop a new, coordinate-free formulation of the equations of motion, which we call the \emph{Euler--Lagrange--Poincar\'e equations} (since they simultaneously generalize the Euler--Lagrange, Euler--Poincar\'e, and Lagrange--Poincar\'e equations). This new formulation is based on the work of Crainic and Fernandes~\cite{CrFe2003}, particularly the notion of a Lie algebroid connection and its use in describing variations of paths.

\subsection[Lie algebroids and $A$-paths]{Lie algebroids and $\boldsymbol{A}$-paths}

We begin by recalling the def\/inition of a Lie algebroid $A$ and an appropriate class of paths in~$A$, called $A$-paths. This review will necessarily be very brief, but for more information on Lie algebroids, we refer the reader to the comprehensive work by Mackenzie~\cite{Mackenzie2005}.

\begin{Definition} A \emph{Lie algebroid} is a real vector bundle $ \tau \colon A \rightarrow Q $ equipped with a Lie bracket $ [ \cdot , \cdot ] \colon \Gamma (A) \times \Gamma (A) \rightarrow \Gamma (A) $ on its space of sections and a bundle map $ \rho \colon A \rightarrow T Q $, called the \emph{anchor map}, satisfying the following Leibniz rule-like compatibility condition:
 \begin{gather*}
 [ X, f Y ] = f [X, Y] + \rho (X) [f] Y, \qquad \text{for all $ X, Y \in \Gamma (A) $, $ f \in C ^\infty (Q) $}.
 \end{gather*}
\end{Definition}

\begin{Example} \label{ex:tangentAlgebroid}
The tangent bundle $ T Q $ is a Lie algebroid over $Q$, where $ \tau \colon T Q \rightarrow Q $ is the usual bundle projection, $ [ \cdot, \cdot ] \colon \mathfrak{X} (Q) \times \mathfrak{X} (Q) \rightarrow \mathfrak{X} (Q) $ is the Jacobi--Lie bracket of vector f\/ields, and $ \rho \colon T Q \rightarrow T Q $ is the identity.

Furthermore, any integrable distribution $ \mathcal{D} \subset T Q $ is also a Lie algebroid over $Q$, where $ \tau $, $ [ \cdot , \cdot ] $, and $ \rho $ are just the restrictions to $\mathcal{D}$ of the corresponding maps for $ T Q $. We say that $\mathcal{D}$ is a \emph{Lie subalgebroid} of $ T Q $.

In particular, if $Q \rightarrow M $ is a f\/ibered manifold, then $ V Q \subset T Q $ is integrable and hence a Lie algebroid over~$Q$. (Note that $ V Q $ is generally \emph{not} a Lie algebroid over~$M$, since it may not even be a vector bundle over $M$.)
\end{Example}

\begin{Example} Any Lie algebra $ \mathfrak{g} $ is a Lie algebroid over $ \bullet $ (the space with one point), where the maps $\tau$ and $\rho$ are trivial and $ [ \cdot , \cdot ] $ is the Lie bracket.

More generally, if $ Q \rightarrow Q/G $ is a principal $G$-bundle for some Lie group $G$, then $ T Q / G $ def\/ines an algebroid over $ Q / G $ called the \emph{Atiyah algebroid}. The algebroid $ \mathfrak{g} \rightarrow \bullet $ can be identif\/ied with the special case $ Q = G $, where $G$ is the Lie group integrating $ \mathfrak{g} $ (which exists by Lie's third theorem).
\end{Example}

\begin{Definition} \label{def:a-path}
A path $ a \in \mathcal{P} (A) $ over the base path $ q = \tau \circ a \in \mathcal{P} (Q) $ is called an \emph{$A$-path} if $ \dot{q} (t) = \rho \bigl( a (t) \bigr) $ for all $ t \in I $. The space of $A$-paths is denoted by $ \mathcal{P} _\rho (A) $.
\end{Definition}

\begin{Remark}
Equivalently, $a$ is an $A$-path if and only if $ a \,\mathrm{d}t \colon T I \rightarrow A $ is a morphism of Lie algebroids, where $ T I \rightarrow I $ has the tangent bundle Lie algebroid structure of Example~\ref{ex:tangentAlgebroid}. Hence, $A$-paths can be seen as ``paths in the category of Lie
 algebroids''.
\end{Remark}

\subsection[Connections and variations of $A$-paths]{Connections and variations of $\boldsymbol{A}$-paths}

We now turn to discussing an appropriate class of variations on the space of $A$-paths, $ \mathcal{P} _\rho (A) $. Crainic and Fernandes~\cite[Lemma~4.6]{CrFe2003} show that $ \mathcal{P} _\rho (A) \subset \mathcal{P} (A) $ is a Banach submanifold. However, we do not want to take arbitrary variations $ \delta a \in T _a \mathcal{P} _\rho (A) $, just as we did not want to take arbitrary paths in $ \mathcal{P} (A) $.

To illustrate the reasoning behind this, consider the case of a Lie algebra $ \mathfrak{g} $. Since this is a Lie algebroid over $ \bullet $, where $\tau$ and $\rho$ are trivial, it follows that \emph{every} path $ \xi \in \mathcal{P} ( \mathfrak{g} ) $ is a~$ \mathfrak{g} $-path, i.e., $ \mathcal{P} _\rho ( \mathfrak{g} ) = \mathcal{P} (\mathfrak{g} )$. However, the variational principle for the Euler--Poincar\'e equations on $ \mathfrak{g} $ considers only variations of the form
\begin{gather*}
 \delta \xi = [ \xi, \eta ] + \dot{ \eta } = \operatorname{ad} _\xi \eta + \dot{ \eta } ,
\end{gather*}
where $ \eta \in \mathcal{P} ( \mathfrak{g} ) $ is an arbitrary path vanishing at the endpoints (cf.\ Marsden and Ratiu~\cite[Chapter~13]{MaRa1999}). These constraints on admissible variations are known as \emph{Lin constraints}.

To generalize these constrained variations to an arbitrary Lie algebroid $ A \rightarrow Q $, we f\/irst discuss the notion of a~connection on a~Lie algebroid, of which the adjoint action $ ( \xi, \eta ) \mapsto \operatorname{ad} _\xi \eta $ of $ \mathfrak{g} $ on itself will be a special case.

\begin{Definition} If $ A \rightarrow Q $ is a Lie algebroid and $ E \rightarrow Q $ is a~vector bundle, then an \emph{$A$-connection on $E$} is a bilinear map $ \nabla \colon \Gamma (A) \times \Gamma (E) \rightarrow \Gamma (E)$, $( X, u ) \mapsto \nabla _X u $, satisfying the conditions
 \begin{gather*}
 \nabla _{ f X } u = f \nabla _X u , \qquad \nabla _X ( f u ) = f \nabla _X u + \rho (X) [f] u ,
 \end{gather*}
 for all $ X \in \Gamma (A) $, $ u \in \Gamma (E) $, and $ f \in C ^\infty (Q) $.
\end{Definition}

\begin{Remark}
 A $ T Q $-connection is just an ordinary connection. Given a~$ T Q $-connection $ \nabla $ on $A$, there are two naturally-induced $A$-connections on $A$, which we write as $ \nabla $ and $ \overline{ \nabla } $:
 \begin{gather*}
 \nabla _X Y = \nabla _{\rho (X)} Y, \qquad \overline{ \nabla } _X Y = \nabla _{ \rho (Y) } X + \left[ X, Y
 \right] .
 \end{gather*}
 For example, when $ A = \mathfrak{g} \rightarrow \bullet $, the trivial $ T \bullet $-connection induces two $ \mathfrak{g} $-connections on $\mathfrak{g}$:
 \begin{gather*}
 \nabla _X Y = 0 , \qquad \overline{ \nabla } _X Y = [X, Y] = \operatorname{ad} _X Y .
 \end{gather*}
 Hence, the induced connection $ \overline{ \nabla } $ can be seen as a generalization of the adjoint action of a Lie algebra.
\end{Remark}

\begin{Definition} \label{def:pathConnection}
Let $ a \in \mathcal{P} _\rho (A) $ be an $A$-path over $ q \in \mathcal{P} (Q) $ and $ \xi \in \mathcal{P} \bigl( \Gamma (A) \bigr) $ be a time-dependent section such that $ a (t) = \xi \bigl( q (t) \bigr) $. Suppose $ u \in \mathcal{P} (E) $ has the same base path $q$, along
 with a time-dependent section $ \eta \in \mathcal{P} \bigl( \Gamma (E) \bigr) $ satisfying $ u (t) = \eta \bigl( q (t) \bigr) $. Then we def\/ine
 \begin{gather*}
 \nabla _a u (t) = \nabla _\xi \eta \bigl( t, q (t) \bigr) + \dot{\eta } \bigl( t, q (t) \bigr) ,
 \end{gather*}
which is independent of the choice of $ \xi$, $\eta $.
\end{Definition}

\begin{Definition}\label{def:admissibleVariation}
Let $ a \in \mathcal{P} _\rho (A) $ be an $A$-path over $ q \in \mathcal{P} (Q) $. An \emph{admissible variation} of $a$ is a~variation of the form $ X _{ b, a } \in T _a \mathcal{P} _\rho (A) $, where $ b \in \mathcal{P} (A) $ is a path in $A$ (but not necessarily an $A$-path!) over $q$ such that $ b (0) = 0 $ and $ b (1) = 0 $. Relative to a $ T Q $-connection $ \nabla $, the varia\-tion~$ X _{ b, a } $ has vertical component $ \overline{\nabla } _a b $ and horizontal component~$ \rho (b) $.
\end{Definition}

\begin{Remark} \label{rmk:admissibleSubbundle} Crainic and Fernandes~\cite[Proposition~4.7]{CrFe2003} show that these admissible variations form an integrable subbundle $ \mathcal{F} (A) \subset T \mathcal{P} _\rho (A) $, and the tangent subspaces $ \mathcal{F} _a (A) \subset T _a \mathcal{P} _\rho (A) $ are independent of the choice of connection $ \nabla $ in the above def\/inition.
\end{Remark}

\subsection{Lagrangian mechanics}\label{sec:algebroidLagrangianMechanics}

Now that we have appropriate paths and variations, we are prepared to discuss the variational approach to Lagrangian mechanics on Lie algebroids.

\begin{Definition} Given a Lagrangian $ L \colon A \rightarrow \mathbb{R} $, the \emph{action functional} $ S \colon \mathcal{P} _\rho (A) \rightarrow \mathbb{R} $ is def\/ined to be
 \begin{gather*}
 S (a) = \int _0 ^1 L \bigl( a (t) \bigr) \,\mathrm{d}t .
 \end{gather*}
We say that $ a \in \mathcal{P} _\rho (A) $ satisf\/ies \emph{Hamilton's variational principle for $A$-paths} if $ \mathrm{d} S ( X _{ b , a } ) = 0 $ for all admissible variations $ X _{ b, a } \in \mathcal{F} _a (A) $.
\end{Definition}

We next use the notion of admissible variation from Def\/inition~\ref{def:admissibleVariation}, and its expression in terms of a~connection on~$A$, to give a new, coordinate-free characterization of the solutions to Hamilton's variational principle for $A$-paths.

\begin{Theorem} \label{thm:connectionELP}
An $A$-path $ a \in \mathcal{P} _\rho (A) $ satisfies Hamilton's principle if and only if, given a~$ T Q $-connection $ \nabla $ on~$A$, it satisfies the differential equation
 \begin{gather}\label{eqn:connectionELP}
 \rho ^\ast \mathrm{d} L ^\textup{hor} (a) + \overline{ \nabla }_a ^\ast \mathrm{d} L ^\textup{ver} (a) = 0 ,
 \end{gather}
 where $ \mathrm{d} L ^\textup{hor} $ and $ \mathrm{d} L ^\textup{ver} $ are the horizontal and vertical components of $ \mathrm{d} L $ relative to $ \nabla $, and where~$ \rho ^\ast $ and~$ \overline{ \nabla } _a ^\ast $ are the formal adjoints of $ \rho $ and $ \overline{ \nabla } _a $.
\end{Theorem}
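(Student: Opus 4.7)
The plan is to expand $\mathrm{d} S(X_{b,a})$ using the splitting of $TA$ induced by the $TQ$-connection $\nabla$, and then to isolate $b$ via two integration-by-parts moves (one pointwise, one along the path). Fix a $TQ$-connection $\nabla$ on $A$; this provides a horizontal-vertical splitting $T_a A \cong H_a A \oplus V_a A$ in which $H_a A$ is identified with $T_{q}Q$ and $V_a A$ with $A_q$, where $q = \tau(a)$. Correspondingly, $\mathrm{d}L(a)$ splits as $\mathrm{d}L^{\mathrm{hor}}(a) \in T^\ast_q Q$ and $\mathrm{d}L^{\mathrm{ver}}(a) \in A^\ast_q$. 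By Definition~\ref{def:admissibleVariation}, an admissible variation $X_{b,a}$ with $b(0)=b(1)=0$ has horizontal component $\rho(b)$ and vertical component $\overline{\nabla}_a b$, so
\begin{gather*}
\mathrm{d}S(X_{b,a}) = \int_0^1 \bigl[\, \langle \mathrm{d}L^{\mathrm{hor}}(a), \rho(b) \rangle + \langle \mathrm{d}L^{\mathrm{ver}}(a), \overline{\nabla}_a b \rangle \,\bigr]\, \mathrm{d}t .
\end{gather*}

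Next I would move $\rho$ and $\overline{\nabla}_a$ off of $b$ using their formal adjoints. The first term is pointwise algebraic: $\langle \mathrm{d}L^{\mathrm{hor}}(a), \rho(b) \rangle = \langle \rho^\ast \mathrm{d}L^{\mathrm{hor}}(a), b \rangle$. For the second term, I would introduce the dual $A$-connection $\overline{\nabla}^\ast$ on $A^\ast$ defined by $\rho(X)[\langle \alpha, Y\rangle] = \langle \overline{\nabla}^\ast_X \alpha, Y\rangle + \langle \alpha, \overline{\nabla}_X Y\rangle$ and use it to integrate by parts along $a$: writing $\overline{\nabla}_a^\ast$ for the induced operator on sections of $A^\ast$ along $q$ (in the sense of Definition~\ref{def:pathConnection}), the Leibniz rule and the fundamental theorem of calculus give
\begin{gather*}
\int_0^1 \langle \mathrm{d}L^{\mathrm{ver}}(a), \overline{\nabla}_a b \rangle \, \mathrm{d}t = \int_0^1 \langle \overline{\nabla}_a^\ast \mathrm{d}L^{\mathrm{ver}}(a), b \rangle \, \mathrm{d}t + \bigl[\langle \mathrm{d}L^{\mathrm{ver}}(a), b\rangle\bigr]_0^1,
\end{gather*}
and the boundary term drops out since $b(0)=b(1)=0$. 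Combining the two calculations,
\begin{gather*}
\mathrm{d}S(X_{b,a}) = \int_0^1 \bigl\langle \rho^\ast \mathrm{d}L^{\mathrm{hor}}(a) + \overline{\nabla}_a^\ast \mathrm{d}L^{\mathrm{ver}}(a),\; b \bigr\rangle \, \mathrm{d}t .
\end{gather*}

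To close the argument I would invoke the fundamental lemma of the calculus of variations: since any smooth section $b$ of $q^\ast A$ vanishing at the endpoints arises as the $b$-datum of an admissible variation (Remark~\ref{rmk:admissibleSubbundle}), the vanishing of $\mathrm{d}S(X_{b,a})$ for all admissible $X_{b,a}$ forces the integrand coefficient to vanish identically, yielding~\eqref{eqn:connectionELP}. Conversely,~\eqref{eqn:connectionELP} trivially implies $\mathrm{d}S(X_{b,a})=0$ for every such $b$.

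The main obstacle is the integration-by-parts identity for $\overline{\nabla}_a$: it requires a careful definition of the formal adjoint $\overline{\nabla}_a^\ast$ via the dual $A$-connection on $A^\ast$ and a verification that the Leibniz-type identity holds along the path $q$, so that the only surviving contribution after integrating is the boundary pairing $\langle \mathrm{d}L^{\mathrm{ver}}(a), b\rangle\rvert_0^1$. A secondary point worth noting, but not needed for the proof itself, is that the equation~\eqref{eqn:connectionELP} must be independent of the choice of $\nabla$; this is guaranteed by Remark~\ref{rmk:admissibleSubbundle}, since the subbundle $\mathcal{F}(A)$ of admissible variations, and hence the space of functionals that must vanish on it, does not depend on $\nabla$.
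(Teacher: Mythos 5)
Your proposal is correct and follows essentially the same route as the paper's proof: decompose $\mathrm{d}S(X_{b,a})$ into horizontal and vertical parts via the connection, pass to the formal adjoints $\rho^\ast$ and $\overline{\nabla}_a^\ast$, and conclude by the fundamental lemma of the calculus of variations. The only difference is that you make explicit the integration-by-parts step (with the boundary term killed by $b(0)=b(1)=0$) that the paper absorbs into the phrase ``formal adjoint'' -- just take care that $\overline{\nabla}_a^\ast$ then differs by a sign from the dual-connection path derivative of Definition~\ref{def:pathConnection}, consistent with $\overline{\nabla}_a^\ast = \operatorname{ad}_\xi^\ast - \mathrm{d}/\mathrm{d}t$ in Example~\ref{ex:algebraELP}.
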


\begin{proof} Given an admissible variation $ X _{ b, a } \in \mathcal{F} _a (A) $, we have
 \begin{gather*}
 \mathrm{d} S ( X _{ b, a } ) = \mathrm{d} S \big( X _{ b, a } ^\text{hor} \big) + \mathrm{d} S \big( X _{ b, a } ^\text{ver} \big)
= \int _0 ^1 \Bigl( \bigl\langle \mathrm{d} L ^\text{hor} (a) , \rho (b) \bigr\rangle + \bigl\langle \mathrm{d} L ^\text{ver} (a)
 , \overline{ \nabla } _a b \bigr\rangle \Bigr) \,\mathrm{d}t \\
\hphantom{\mathrm{d} S ( X _{ b, a } )}{} = \int _0 ^1 \bigl\langle \rho ^\ast \mathrm{d} L ^\text{hor} (a)
 + \overline{ \nabla } _a ^\ast \mathrm{d} L ^\text{ver} (a) , b \bigr\rangle \,\mathrm{d}t
 \end{gather*}
 Since $b$ is arbitrary, it follows that $ \mathrm{d} S $ vanishes for all $ X _{ b, a } \in \mathcal{F} _a (A) $ if and only if $ \rho ^\ast \mathrm{d} L ^\text{hor} (a) + \overline{ \nabla } _a ^\ast \mathrm{d} L ^\text{ver} (a) $ vanishes for all~$t$.
\end{proof}

\begin{Example} \label{ex:algebraELP}
Let $ A = \mathfrak{g} \rightarrow \bullet $, where $ \mathfrak{g} $ is a Lie algebra. Any $ a \in \mathcal{P} _\rho (\mathfrak{g} ) = \mathcal{P} (\mathfrak{g} ) $ can be identif\/ied with its unique time-dependent section $ \xi (t) = \xi ( t, \bullet ) = a (t) $. Since $ \rho $ and $ \nabla $ are trivial, it follows that \eqref{eqn:connectionELP} becomes
 \begin{gather*}
 0 = \overline{ \nabla } _a ^\ast \mathrm{d} L (a) = \left( \operatorname{ad} _\xi + \frac{\mathrm{d}}{\mathrm{d}t}
 \right) ^\ast \frac{ \delta L }{ \delta \xi } = \left( \operatorname{ad} _\xi ^\ast - \frac{\mathrm{d}}{\mathrm{d}t} \right) \frac{ \delta L }{ \delta \xi },
 \end{gather*}
 which are precisely the \emph{Euler--Poincar\'e equations} (cf.\ Marsden and Ratiu~\cite[Chapter~13]{MaRa1999}).
\end{Example}

Next, we show that this coordinate-free formulation agrees with the local-coordinate expression obtained by Weinstein~\cite{Weinstein1996} for regular Lagrangians and by Mart\'{\i}nez~\cite{Martinez2001,Martinez2007,Martinez2008} in the more general case.

\begin{Theorem} \label{thm:coordinateELP} Let $ q ^i $ be local coordinates for $Q$, $ \{ e _I \} $ be a local basis of sections of~$A$, and~$ \nabla $ the locally trivial $ T Q $-connection defined by $ \nabla _{ \partial / \partial q ^i } e _I \equiv 0 $. Let $ \rho ^i _I $ and $ C _{ I J } ^K $ be the local-coordinate representations of $ \rho $ and $ [\cdot , \cdot ] $, where
 \begin{gather*}
 \rho ( e _I ) = \rho ^i _I \frac{ \partial }{ \partial q ^i } , \qquad [ e _I , e _J ] = C _{ I J } ^K e _K .
 \end{gather*}
If $ a \in \mathcal{P} (A) $ has the local-coordinate representation $ a (t) = \xi ^I (t) e _I \bigl( q (t) \bigr) $, then $a$ is an $A$-path if and only if $ \dot{q} ^i = \rho ^i _I \xi ^I $, and $a$ satisfies~\eqref{eqn:connectionELP} if and only if
 \begin{gather} \label{eqn:coordinatesELP}
 \rho ^i _I \frac{ \partial L }{ \partial q ^i } - C ^K _{ I J } \xi ^J \frac{ \partial L }{ \partial \xi ^K } - \frac{\mathrm{d}}{\mathrm{d}t} \frac{ \partial L }{ \partial \xi ^I } = 0 .
 \end{gather}
\end{Theorem}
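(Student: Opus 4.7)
The plan is to work entirely in the chosen local coordinates $(q^i, \xi^I)$ on $A$. The $A$-path condition is immediate: $a(t) = \xi^I(t) e_I(q(t))$ gives $\rho(a) = \xi^I \rho^i_I \,\partial/\partial q^i$, so by Definition~\ref{def:a-path} the condition $\dot q = \rho(a)$ is equivalent to $\dot q^i = \rho^i_I \xi^I$. The real content is the equivalence of \eqref{eqn:connectionELP} with \eqref{eqn:coordinatesELP}, which I would establish by translating each of the four objects $\rho^*$, $\overline{\nabla}_a^*$, $\mathrm{d} L^{\textup{hor}}$, $\mathrm{d} L^{\textup{ver}}$ into the chosen coordinates.

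Because the locally trivial connection satisfies $\nabla_{\partial/\partial q^i} e_I = 0$, the induced splitting of $TA$ makes $\partial/\partial q^i$ horizontal and $\partial/\partial \xi^I$ vertical, so $\mathrm{d} L^{\textup{hor}}(a) = (\partial L/\partial q^i)\, \mathrm{d} q^i$ and $\mathrm{d} L^{\textup{ver}}(a) = (\partial L/\partial \xi^I)\, e^I$ once vertical covectors on $A_q$ are identified with elements of $A_q^*$. The relation $\rho(e_I) = \rho^i_I \,\partial/\partial q^i$ then immediately yields $\rho^* \mathrm{d} L^{\textup{hor}}(a) = \rho^i_I (\partial L/\partial q^i)\, e^I$. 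Next I would evaluate $\overline{\nabla}_a b$ by representing $a$ and $b$ with $q$-independent time-dependent sections $\xi(t,q) = \xi^I(t) e_I(q)$ and $\eta(t,q) = \beta^I(t) e_I(q)$; the trivial connection kills $\nabla_{\rho(\eta)}\xi$, and the $\rho(X)[f]Y$ Leibniz corrections in $[\xi,\eta]$ vanish by the $q$-independence of the coefficients, leaving only $[\xi,\eta] = \xi^I \beta^J C^K_{IJ} e_K$. Combined with the time-derivative term of Definition~\ref{def:pathConnection}, this gives $(\overline{\nabla}_a b)^K = \dot\beta^K + C^K_{IJ} \xi^I \beta^J$. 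A single integration by parts (the boundary terms vanish because admissible variations satisfy $b(0) = b(1) = 0$) followed by one relabeling of dummy indices produces the formal adjoint, and summing with the earlier expression for $\rho^* \mathrm{d} L^{\textup{hor}}(a)$ reproduces the left-hand side of \eqref{eqn:coordinatesELP} up to the overall antisymmetry $C^K_{JI} = -C^K_{IJ}$.

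The main obstacle is not conceptual but bookkeeping: tracking the antisymmetric placement of $C^K_{IJ}$ through the integration by parts, and distinguishing the trivial $TQ$-connection $\nabla$ from the induced $A$-connection $\overline{\nabla}$ that actually carries the bracket information. The one substantive check along the way is that in the locally trivial frame \emph{both} Leibniz corrections, the one in $\nabla_{\rho(\eta)}\xi$ and the two in $[\xi, \eta]$, drop out for $q$-independent coefficients, so that $\overline{\nabla}_a$ collapses to the constant-coefficient first-order operator $b \mapsto (\dot\beta^K + C^K_{IJ} \xi^I \beta^J) e_K$; once this is verified, the remainder is linear algebra in the frame $\{e_I\}$ and the dual frame $\{e^I\}$.
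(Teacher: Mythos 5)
Your proposal is correct and follows essentially the same route as the paper's proof: translate $\rho$, $\mathrm{d}L^{\textup{hor}}$, $\mathrm{d}L^{\textup{ver}}$, and $\overline{\nabla}_a$ into the frame $\{e_I\}$, observe that the locally trivial connection and the $q$-independence of the coefficients kill both the $\nabla_{\rho(\eta)}\xi$ term and the Leibniz corrections in the bracket, and then pass to the formal adjoints. The only cosmetic difference is that you obtain $\overline{\nabla}_a^\ast$ by an explicit integration by parts, whereas the paper reads the adjoint off directly from the operator form $\overline{\nabla}_a = -C^K_{IJ}\xi^J e^I e_K + \mathrm{d}/\mathrm{d}t$; the computations coincide after the index relabeling you already flag.
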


\begin{proof}
 For the $A$-path condition, we have
 \begin{gather*}
 \dot{q} = \dot{q} ^i \frac{ \partial }{ \partial q ^i }, \qquad \rho(a) = \rho \big(\xi ^I e _I \big) = \rho ^i _I \xi ^I \frac{ \partial }{ \partial q ^i },
 \end{gather*}
 so these are equal if and only the $ \partial / \partial q ^i $ coef\/f\/icients are equal. Next, the horizontal and vertical components of $ \mathrm{d} L $ are
 \begin{gather*}
 \mathrm{d} L ^\text{hor} = \frac{ \partial L }{ \partial q ^i } \,\mathrm{d}q ^i , \qquad \mathrm{d} L ^\text{ver}= \frac{ \partial L }{ \partial \xi ^I } e ^I ,
 \end{gather*}
 where, as usual, $ e ^I $ is the dual basis element satisfying $ e ^I e _J = \delta ^I _J $. Moreover, extending~$a$ to the time-dependent section $ \xi (t) = \xi ^J (t) e _J $, we have
 \begin{gather*}
 \overline{ \nabla } _a \eta ^I e _I = \overline{ \nabla } _{ \xi ^J e _J } \eta ^I e _I + \dot{ \eta } ^I e _I= \big[ \xi ^J e _J , \eta ^I e _I \big] + \dot{ \eta } ^I e _I = - C _{ I J } ^K \xi ^J \eta ^I e _K + \dot{ \eta } ^I e _I ,
 \end{gather*}
 so $ \overline{ \nabla } _a = - C _{ I J } ^K \xi ^J e ^I e _K + \mathrm{d}/\mathrm{d}t $. Finally,
 \begin{gather*}
 \rho ^\ast \mathrm{d} L ^\text{hor} + \overline{ \nabla } _a ^\ast
 \mathrm{d} L ^\text{ver} = \rho ^i _I \frac{ \partial L }{ \partial q ^i } e ^I - C _{ I J } ^K \xi ^J \frac{ \partial L
 }{ \partial \xi ^K } e ^I - \frac{\mathrm{d}}{\mathrm{d}t} \frac{ \partial L }{ \partial \xi ^I } e ^I ,
 \end{gather*}
 so the left side vanishes if and only if all the $ e ^I $ coef\/f\/icients on the right side vanish, i.e., \eqref{eqn:connectionELP} holds if and only if~\eqref{eqn:coordinatesELP} holds.
\end{proof}

\begin{Example} \label{ex:tangentELP}
Suppose $ A = T Q \rightarrow Q $. Local coordinates $ q ^i $ on $Q$ yield corresponding local sections $ \partial / \partial q ^i $ of~$TQ$, i.e., $ e _i = \partial / \partial q ^i $. It follows that $ [ e _i , e _j ] \equiv 0 $ and thus $ C ^k _{ i j } \equiv 0 $ for all $ i$, $j$, $k $. Since $ \rho $ is the identity map, we have $ \rho ^i _j = \delta ^i _j $, so the $ T Q $-path condition is $ \dot{q} ^i = \xi ^i $. Putting this all together, it follows that~\eqref{eqn:coordinatesELP} yields
 \begin{gather*}
 \frac{ \partial L }{ \partial q ^i } - \frac{\mathrm{d}}{\mathrm{d}t} \frac{ \partial L }{ \partial \dot{q} ^i } = 0 ,
 \end{gather*}
 i.e., the ordinary Euler--Lagrange equations.
\end{Example}

\begin{Remark} \label{rmk:poisson} There is also an equivalent symplectic/pre-symplectic/Poisson approach to Lagrangian mechanics on Lie algebroids, which has already been well studied in previous work on the subject.

Mart\'{\i}nez~\cite{Martinez2001} shows that one can def\/ine a Lie algebroid notion of dif\/ferential forms (just as we did for the vertical formalism in Section~\ref{sec:fibered}), as well as a version of the tautological $1$-form and canonical $2$-form on $ A ^\ast $. The Legendre transform $ \mathbb{F} L = \mathrm{d} L ^\text{ver} \colon A \rightarrow A ^\ast $ is then used to pull this back to a Lagrangian $2$-form on~$A$ (in the sense of forms on Lie algebroids) and to def\/ine an energy function $ E _L $ on $A$, which Mart\'{\i}nez~\cite{Martinez2001} uses to obtain Lagrangian dynamics on~$A$.

Weinstein~\cite{Weinstein1996}, on the other hand, uses the canonical Poisson structure on $ A ^\ast $ (which generalizes the Lie--Poisson structure on the dual of a Lie algebra), which can be pulled back along $ \mathbb{F} L $ to $ A $ when $L$ is a regular Lagrangian. In this case, the Poisson structure on $A$ induces a Lagrangian vector f\/ield associated to $ E _L $ in the usual way.

The approach of Grabowska et al.~\cite{GrUrGr2006}, Grabowska and Grabowski~\cite{GrGr2008} extends Weinstein's approach in a dif\/ferent direction: instead of using the canonical Poisson structure on $ A ^\ast $, which maps $ T ^\ast A ^\ast \rightarrow T A ^\ast $, they use a related map $ \epsilon \colon T ^\ast A \rightarrow T A ^\ast $ to def\/ine the \emph{Tulczyjew differential} $ \Lambda _L = \epsilon \circ \mathrm{d} L \colon A \rightarrow T A ^\ast $. (The map $\epsilon$ is related to the canonical Poisson map by the Tulczyjew isomorphism $ T ^\ast A ^\ast \xrightarrow{\cong} T ^\ast A $.) Using this framework, one requires that $ a \in \mathcal{P} (A) $ satisfy $ \frac{\mathrm{d}}{\mathrm{d}t} \mathbb{F} L (a) = \Lambda _L (a) $, which contains the Euler--Lagrange--Poincar\'e equations together with the $A$-path condition. We remark that Grabowska et al.~\cite{GrUrGr2006}, Grabowska and Grabowski~\cite{GrGr2008} apply this approach both to Lie algebroids and to so-called ``general algebroids,'' for which the map $\epsilon$ is taken as primitive, and where there is generally no canonical Poisson structure on the dual.
\end{Remark}

\subsection{Special case: the Lagrange--Poincar\'e equations} \label{sec:lagrangePoincare}

The \emph{Lagrange--Poincar\'e equations} on a principal bundle $ Q \rightarrow Q / G $ are typically derived by the procedure of \emph{Lagrangian reduction} (cf.\ Marsden and Scheurle~\cite{MaSc1993b}, Cendra et al.~\cite{CeMaRa2001}), relative to a particular choice of principal connection. We now discuss how these equations may instead be obtained directly on the Atiyah algebroid $ A = T Q / G \rightarrow Q / G $, using the framework presented above, and how the choice of principal connection is related to the connection $ \nabla $ on $A$. (Note that $ Q / G $, not $Q$, is the base of this algebroid.) In particular,
Example~\ref{ex:algebraELP} corresponds to the case $ Q = G $, while Example~\ref{ex:tangentELP} corresponds to the case where $G$ is trivial.

Let $ L \colon T Q / G \rightarrow \mathbb{R} $ be a Lagrangian on the Atiyah algebroid. A principal connection corresponds to a section of the anchor $ \rho \colon T Q / G \rightarrow T ( Q / G ) $, i.e., a~right splitting of the \emph{Atiyah sequence},
\begin{gather} \label{eqn:atiyahSequence}
 0 \rightarrow \widetilde{ \mathfrak{g} } \rightarrow T Q / G \xrightarrow{ \rho } T ( Q / G ) \rightarrow 0 .
 \end{gather}
Here, following Cendra et al.~\cite{CeMaRa2001}, we use $ \widetilde{ \mathfrak{g} } $ to denote the adjoint bundle $ Q \times _G \mathfrak{g} $, so a left splitting is a principal connection $1$-form (cf.\ Mackenzie~\cite[Chapter~5]{Mackenzie2005}). This splitting lets us write $ T Q / G \cong T ( Q / G ) \oplus \widetilde{ \mathfrak{g} } $; the anchor $\rho$ is just projection onto the f\/irst component, and the bracket of two sections $ \xi = ( X, \overline{ \xi } ) $ and $ \eta = ( Y , \overline{ \eta } ) $ is
 \begin{gather} \label{eqn:atiyahBracket}
 \bigl[ ( X, \overline{ \xi } ) , ( Y , \overline{ \eta } ) \bigr] = \bigl( [X, Y ] , \widetilde{ \nabla } _X \overline{ \eta } - \widetilde{ \nabla } _Y \overline{ \xi } + [ \overline{ \xi } , \overline{ \eta } ] - \widetilde{ R } ( X, Y ) \bigr) ,
 \end{gather}
 where $ \widetilde{ \nabla } $ is the covariant derivative and $ \widetilde{ R } $ the curvature form of the principal connection (cf.\ Cendra et al.~\cite[Theorem~5.2.4]{CeMaRa2001} in this particular case and Mackenzie~\cite[Theorem~7.3.7]{Mackenzie2005} in a~more general setting).

 Relative to the splitting induced by the principal connection, $A$-paths have the form $ a = ( x, \dot{x} , \overline{ v } ) $, where $x$ is the base path in $ Q / G $. As before, we extend $a$ to a time-dependent section $ \xi = ( X, \overline{ \xi } ) $, and likewise, we extend an arbitrary path $b = ( x, \delta x , \overline{ w } ) $ to a time-dependent section $ \eta = ( Y, \overline{ \eta } ) $. To f\/ind the corresponding admissible variation $ \delta a $, we calculate $ \rho (b) = \delta x $ and use~\eqref{eqn:atiyahBracket} to obtain
 \begin{gather*}
 \overline{ \nabla } _a b = \overline{ \nabla } _\xi \eta + \dot{ \eta }
= \overline{ \nabla } _{ ( X, \overline{ \xi } ) } ( Y, \overline{ \eta } ) + ( \dot{ Y }, \dot{ \overline{ \eta } } ) = \nabla _Y ( X, \overline{ \xi } ) + \bigl[ ( X, \overline{ \xi } ), (Y, \overline{ \eta } ) \bigr] + ( \dot{ Y } , \dot{ \overline{ \eta } } ) \\
\hphantom{\overline{ \nabla } _a b}{} = \bigl( \nabla _Y X + [ X, Y ] + \dot{ Y } , \widetilde{ \nabla } _X \overline{ \eta } + [ \overline{ \xi }, \overline{ \eta } ] - \widetilde{ R } ( X, Y ) + \dot{ \overline{ \eta } } \bigr) \\
\hphantom{\overline{ \nabla } _a b}{}= \bigl( \overline{ \nabla } _X Y + \dot{ Y } , (\widetilde{ \nabla } _X \overline{ \eta } + \dot{ \overline{ \eta } }) + [ \overline{ \xi }, \overline{ \eta } ] - \widetilde{ R } ( X, Y ) \bigr) \\
\hphantom{\overline{ \nabla } _a b}{} = \bigl( \overline{ \nabla } _{ \dot{x} } (\delta x) , \widetilde{ \nabla } _{\dot{x}} \overline{ w } + [ \overline{ v }, \overline{ w } ] - \widetilde{ R } ( \dot{x}, \delta x ) \bigr).
 \end{gather*}
 (Here, we chose $ \nabla $ to be compatible with $ \widetilde{ \nabla } $, so that the $ \nabla _Y \overline{ \xi } $ and $ \widetilde{ \nabla } _Y \overline{ \xi } $ terms cancel.) Therefore, admissible variations have the form $ \delta a = ( \delta x, \delta \dot{x}, \delta \overline{ v } ) $, where
 \begin{gather*}
 \delta \overline{ v } = \widetilde{ \nabla } _{\dot{x}} \overline{ w } + [ \overline{ v }, \overline{ w } ] - \widetilde{ R } ( \dot{x}, \delta x ) ,
 \end{gather*}
 and these are precisely the admissible variations of Cendra et al.~\cite[Theorem~3.4.1]{CeMaRa2001}.

 Furthermore, now that we have expressions for $ \rho $ and $ \overline{ \nabla } $ in terms of the splitting induced by the principal connection, it is a straightforward matter to write down the Euler--Lagrange--Poincar\'e equations \eqref{eqn:connectionELP} in terms of their adjoints. If we write $ L = L ( x, \dot{x}, \overline{ v } ) $, then
 \begin{gather*}
 \rho ^\ast \mathrm{d} L ^{ \text{hor} } ( x, \dot{x}, \overline{ v } ) + \overline{ \nabla } _{ ( x, \dot{x}, \overline{ v } ) } ^\ast \mathrm{d} L ^{ \text{ver} } ( x, \dot{x}, \overline{ v } ) \\
 \qquad{} = \left( \frac{ \partial L }{ \partial x } + \overline{ \nabla } _{ \dot{x} } ^\ast \frac{ \partial L }{ \partial \dot{x} } - ( i
 _{\dot{x}} \widetilde{ R } ) ^\ast \frac{ \partial L }{ \partial \overline{ v } } \right) \mathrm{d}x + \left( \widetilde{
 \nabla } _{ \dot{x} } ^\ast \frac{ \partial L }{ \partial \overline{ v } } + \operatorname{ad} _{ \overline{ v } } ^\ast
 \frac{ \partial L }{ \partial \overline{ v } } \right) \mathrm{d} \overline{ v } .
 \end{gather*}
 Hence, this vanishes precisely when
 \begin{gather*}
 \frac{ \partial L }{ \partial x } + \overline{ \nabla } _{ \dot{x} } ^\ast \frac{ \partial L }{ \partial \dot{x} } - ( i _{\dot{x}} \widetilde{ R } ) ^\ast \frac{ \partial L }{ \partial \overline{ v } } = 0 , \qquad \widetilde{ \nabla } _{ \dot{x} } ^\ast \frac{ \partial L }{ \partial
 \overline{ v } } + \operatorname{ad} _{ \overline{ v } } ^\ast \frac{ \partial L }{ \partial \overline{ v } } = 0 ,
 \end{gather*}
which are exactly the coordinate-free Lagrange--Poincar\'e equations of Cendra et al.~\cite[Theorem~3.4.1]{CeMaRa2001}. (The only notable dif\/ference
 in notation is that Cendra et al.~\cite{CeMaRa2001} write both covariant derivatives $ \overline{ \nabla } _{ \dot{x} } $ and $ \widetilde{ \nabla } _{ \dot{x} } $ as $ {D} / {D} t $ and their adjoints as $ - {D} / {D} t $.)

\begin{Remark} The argument above works not only for the Atiyah algebroid of a~principal bundle, but also in the more general setting discussed in Mackenzie~\cite[Chapter~7]{Mackenzie2005}, where one can split a short exact sequence similar to~\eqref{eqn:atiyahSequence} and obtain a~bracket of the form~\eqref{eqn:atiyahBracket}. This includes the so-called \emph{transitive Lie algebroids}, of which the Atiyah algebroid is a particular example.
\end{Remark}

\begin{Example} Wong's equations \cite{Wong1970} for a particle in a Yang--Mills f\/ield are a classic example of Lagrange--Poincar\'e theory. Following the presentation in Cendra et al.~\cite[Chapter~4]{CeMaRa2001}, we suppose that $ Q \rightarrow Q / G $ is a principal $G$-bundle equipped with a Riemannian metric $g$ on the base $ Q / G $ and a~bi-invariant Riemannian metric $\kappa$ on the structure group~$G$. Using a principal connection to split
 $ T Q / G \cong T ( Q / G ) \oplus \widetilde{ \mathfrak{g} } $, and denoting by $k$ the f\/iber metric on $ \widetilde{ \mathfrak{g} } $ corresponding to $\kappa$, we take the Lagrangian
 \begin{gather*}
 L ( x, \dot{x}, \overline{ v } ) = \frac{1}{2} k ( \overline{ v} , \overline{ v } ) + \frac{1}{2} g ( \dot{x} , \dot{x} ) .
 \end{gather*}
The af\/f\/ine connection $ \nabla $ is then chosen to agree with $ \widetilde{ \nabla } $ on $ \widetilde{ \mathfrak{g} } $ and with the Levi-Civita connection associated to $g$ on the base.

With this connection in hand, we now compute the $ \mathrm{d} L ^{ \text{ver} } $ components,
 \begin{gather*}
 \frac{ \partial L }{ \partial \dot{x} } = g ( \dot{x}, \cdot ) = g ^\flat ( \dot{x} ) , \qquad \frac{ \partial L }{ \partial \overline{ v } } = k ( \overline{ v } , \cdot ) = k ^\flat ( \overline{ v } ) ,
 \end{gather*}
using the familiar ``f\/lat'' notation for metrics. Since the f\/iber metric $k$ is necessarily $ \operatorname{ad} $-invariant, the term $ \operatorname{ad} ^\ast _{ \overline{ v } } k ^\flat ( \overline{ v } ) $ vanishes, so the $ \mathrm{d} \overline{ v } $ component of the Lagrange--Poincar\'e equations is
 \begin{gather} \label{eqn:wong1}
 \widetilde{ \nabla } _{ \dot{x} } k ^\flat ( \overline{ v } ) = 0 .
 \end{gather}
 Next, since $ \nabla $ agrees with the Levi-Civita connection on $ Q / G $, the torsion-free property implies
 \begin{gather*}
 \overline{ \nabla } _X Y = \nabla _Y X + [X, Y ] = \nabla _X Y ,
 \end{gather*}
so we just have $ \overline{ \nabla } \equiv \nabla $. Moreover, using the metric-compatibility of $ \nabla $ along with~\eqref{eqn:wong1} to compute $ \mathrm{d} L ^{ \text{hor} } $, it can be seen that
 \begin{gather*}
 \frac{ \partial L }{ \partial x } + \overline{ \nabla } _{ \dot{x} } ^\ast \frac{ \partial L }{ \partial \dot{x} } = g ^\flat ( \nabla _{ \dot{x} } \dot{x} ) ,
 \end{gather*}
 and therefore the $ \mathrm{d} x $ component of the Lagrange--Poincar\'e equations is
 \begin{gather} \label{eqn:wong2}
 g ^\flat ( \nabla _{ \dot{x} } \dot{x} ) = \big( i _{ \dot{x} } \widetilde{ R } \big) ^\ast k ^\flat ( \overline{ v } ) .
 \end{gather}
The equations \eqref{eqn:wong1} and \eqref{eqn:wong2} are precisely the coordinate-free version of Wong's equations. For further discussion on Wong's equations from the perspective of Lie algebroids, see Le\'on et al.~\cite{LeMaMa2005}, Grabowska et al.~\cite{GrUrGr2006}.

 We conclude this example with some remarks on the relationship between Wong's equations and the generalized notion of geodesics on a~Lie algebroid. Montgomery~\cite{Montgomery1990} called $ g \oplus k $ a~\emph{Kaluza--Klein metric} and related Wong's equations to \emph{Kaluza--Klein geodesics}. However, a Kaluza--Klein metric is a particular example of a \emph{Lie algebroid metric} (in this case, on $ A = T Q / G $), for which there is a unique Levi-Civita
 (torsion-free, metric-compatible) $A$-connection $ \nabla $, and one may consider the corresponding geodesic equations,
 \begin{gather*}
 \nabla _a a = 0 .
 \end{gather*}
(See Crainic and Fernandes~\cite{CrFe2003}, Cort\'es and Mart\'{\i}nez~\cite{CoMa2004}, Cort\'es et al.~\cite{CoLeMaMaMa2006}.) Grabowska et al.~\cite{GrUrGr2006} pointed out that Wong's equations may in fact be considered a~special case of the generalized geodesic equations on a~Lie algebroid; this correspondence is hidden slightly by the fact that Wong's equations are written relative to an $A$-connection obtained from $ \widetilde{ \nabla } $ rather than the Levi-Civita $A$-connection.
\end{Example}

\subsection{Fibered manifolds revisited}

The results of Section~\ref{sec:EL} for f\/ibered manifolds are, in fact, a~special case of Lagrangian mechanics on the Lie algebroid $ V Q $.

Recall from Example~\ref{ex:tangentAlgebroid} that, whenever $ Q \rightarrow M $ is a f\/ibered manifold, the vertical bundle~$ V Q $ is a Lie algebroid over $Q$; in particular, it is a Lie subalgebroid of $ T Q $, from which it inherits the bracket $ [ \cdot , \cdot ] $, projection $ \rho $, and (identity) anchor $ \rho $. Now, by Def\/inition~\ref{def:a-path}, $ a \in \mathcal{P} ( V Q ) $ over $ q \in \mathcal{P} (Q) $ is a $ V Q $-path if and only if it
satisf\/ies $ \dot{q} = a $. Since $ a (t) \in V Q $ for each $ t \in I $, this means that $ V Q $-paths are precisely the tangent prolongations of vertical paths $ q \in \mathcal{P} _V (Q) $. Hence, we may identify $ \mathcal{P} _\rho ( V Q ) $ with~$ \mathcal{P} _V (Q) $.

Suppose now that $ L \colon V Q \rightarrow \mathbb{R} $ is a~Lagrangian in the sense of Section~\ref{sec:algebroidLagrangianMechanics}. If $ ( x ^\sigma , y ^i ) $ are f\/iber-adapted local coordinates for $Q \rightarrow M $, then $ e _i = \partial / \partial y ^i $ def\/ines a~basis of local sections of~$ V Q $. Since an $A$-path is just a~tangent prolongation of a vertical path, it follows that the $A$-path conditions are $ \dot{x} ^\sigma = 0 $ and $ \dot{y} ^i = \xi ^i $. Furthermore, as in Example~\ref{ex:tangentELP}, we have $ \rho ^i _j = \delta ^i _j $, $ \rho ^i _\sigma \equiv 0 $, and $ C _{ i j } ^k \equiv 0 $, so~\eqref{eqn:coordinatesELP} becomes
\begin{gather*}
 \frac{ \partial L }{ \partial y ^i } - \frac{\mathrm{d}}{\mathrm{d}t} \frac{ \partial L }{ \partial \dot{y} ^i } = 0 .
\end{gather*}
Together with the $A$-path condition, this agrees precisely with the vertical Euler--Lagrange equations~\eqref{eqn:verticalEL}.

\subsection{Lie algebroid morphisms and reduction}

Finally, we give a brief review of Lagrangian reduction on Lie algebroids. Weinstein~\cite{Weinstein1996} and Mart\'{\i}nez~\cite{Martinez2008} showed that, whenever $ \Phi \colon A \rightarrow A ^\prime $ is a Lie algebroid morphism, then one can relate Lagrangian dynamics on~$A$ to those on $ A ^\prime $.

Informally, a Lie algebroid morphism is a mapping that ``preserves'' the Lie algebroid structure in an appropriate sense. More precisely, if $ A \rightarrow M $ and $ A ^\prime \rightarrow M ^\prime $ are Lie algebroids (possibly over dif\/ferent base manifolds), then a bundle mapping $ \Phi \colon A \rightarrow A ^\prime $ is a \emph{Lie algebroid morphism} if the dual comorphism $ \Phi ^\ast \colon A ^{\prime \ast} \rightarrow A ^\ast $ is a~Poisson relation with respect to the canonical Poisson structures on $ A ^\ast $ and $ A ^{\prime \ast} $. (See also
Remark~\ref{rmk:poisson}.)

\begin{Theorem} \label{thm:weinsteinMartinez} Let $ \Phi \colon A \rightarrow A ^\prime $ be a morphism of Lie algebroids, and suppose $ L \colon A \rightarrow \mathbb{R} $ and $ L ^\prime \colon A ^\prime \rightarrow \mathbb{R} $ are Lagrangians such that $ L = L ^\prime \circ \Phi $. If $ a \in \mathcal{P} _\rho (A) $ is such that $ a ^\prime = \Phi \circ a \in \mathcal{P} _{ \rho ^\prime } ( A ^\prime ) $ is a solution path for $ L ^\prime $, then $a$ is a solution path
 for $L$. Moreover, the following converse holds when $ \Phi \colon A \rightarrow A ^\prime $ is fiberwise surjective: If $ a \in \mathcal{P} _\rho (A) $ is a solution path for $L$, then $ a ^\prime = \Phi \circ a \in \mathcal{P} _{ \rho ^\prime } ( A ^\prime ) $ is a solution path for $ L ^\prime $.
\end{Theorem}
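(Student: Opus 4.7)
The plan is to argue variationally via Theorem \ref{thm:connectionELP}. Since $L = L' \circ \Phi$, the action functionals satisfy $S(a) = S'(\Phi \circ a)$ on the corresponding path spaces, so by the chain rule
\begin{gather*}
  \mathrm{d}S(v) = \mathrm{d}S'(\Phi_\ast v) \qquad \text{for every } v \in T_a \mathcal{P}_\rho(A),
\end{gather*}
whenever $\Phi_\ast v$ is well defined as a tangent vector to $\mathcal{P}_{\rho'}(A')$ at $a' = \Phi \circ a$. The solution condition is $\mathrm{d}S = 0$ on the subbundle $\mathcal{F}(A) \subset T\mathcal{P}_\rho(A)$ of admissible variations (and likewise on $\mathcal{F}(A')$), so the whole theorem reduces to understanding how $\Phi_\ast$ interacts with these subbundles.

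The main structural lemma I would establish is: if $X_{b,a} \in \mathcal{F}_a(A)$, then $\Phi_\ast X_{b,a} = X_{\Phi \circ b,\,a'}$, and this lies in $\mathcal{F}_{a'}(A')$. Two ingredients make this plausible. First, $a'$ is genuinely an $A'$-path: by the remark following Definition \ref{def:a-path}, $a\,\mathrm{d}t : TI \to A$ is a morphism of Lie algebroids, so composition with $\Phi$ gives $a'\,\mathrm{d}t : TI \to A'$ a morphism as well. Second, to check that $\Phi \circ b$ generates the pushed-forward variation in the sense of Definition \ref{def:admissibleVariation}, I would pick a $TQ$-connection $\nabla$ on $A$ and a $TQ'$-connection $\nabla'$ on $A'$ that are $\Phi$-compatible, then use the anchor compatibility $\rho' \circ \Phi = \phi_\ast \circ \rho$ to match horizontal components $\rho(b) \mapsto \rho'(\Phi \circ b)$, and the bracket-preserving property to match vertical components $\overline{\nabla}_a b \mapsto \overline{\nabla'}_{a'}(\Phi \circ b)$. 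By the connection-independence of $\mathcal{F}(A)$ noted in Remark \ref{rmk:admissibleSubbundle}, it suffices to exhibit \emph{some} compatible pair, which can always be arranged locally and patched with a partition of unity.

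Given the lemma, part (i) is immediate: if $a'$ is a solution of $L'$, then $\mathrm{d}S'(X_{\Phi \circ b,\,a'}) = 0$ for every generator $b$ of an admissible variation of $a$, so $\mathrm{d}S(X_{b,a}) = 0$ for all $X_{b,a} \in \mathcal{F}_a(A)$, and $a$ is a solution of $L$. For part (ii), I would use fiberwise surjectivity to lift admissible variations. Given an arbitrary $X_{b',\,a'} \in \mathcal{F}_{a'}(A')$, the short exact sequence $0 \to \ker \Phi \to A \to \phi^\ast A' \to 0$ of vector bundles over $Q$ splits smoothly (since the quotient is a vector bundle), so one can choose a smooth right inverse $\sigma$ of $\Phi$ and set $b = \sigma \circ b'$ along the base path of $a$. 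Fiberwise linearity of $\sigma$ gives $b(0) = b(1) = 0$, and $\Phi \circ b = b'$. The lemma then yields $\Phi_\ast X_{b,a} = X_{b',\,a'}$, hence $\mathrm{d}S'(X_{b',\,a'}) = \mathrm{d}S(X_{b,a}) = 0$, as needed.

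The main obstacle is the structural lemma identifying $\Phi_\ast X_{b,a}$ with $X_{\Phi \circ b,\,a'}$: everything hinges on the naturality of the Crainic--Fernandes construction of admissible variations under Lie algebroid morphisms. Once that is in place, both implications are short, and the fiberwise-surjective lifting in (ii) is a routine linear-algebra construction.
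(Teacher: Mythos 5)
The paper does not actually prove this theorem: its ``proof'' is a citation to Mart\'{\i}nez [Theorems 5--6] and Weinstein [Theorems 4.8 and 4.5], so any genuine argument is a departure from the text. Your variational strategy is essentially the one used in the cited Mart\'{\i}nez paper, and the overall architecture is sound: the chain rule $\mathrm{d}S = \mathrm{d}S' \circ \Phi_\ast$, the naturality lemma $\Phi_\ast X_{b,a} = X_{\Phi\circ b,\, a'}$, and, for the converse, the lifting of variations via a splitting of $0 \to \ker\Phi \to A \to \phi^\ast A' \to 0$ (which exists precisely because fiberwise surjectivity makes $\ker\Phi$ a subbundle) together give both implications cleanly. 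The observation that $a' = \Phi\circ a$ is automatically an $A'$-path because $a\,\mathrm{d}t$ is a morphism $TI\to A$ is also correct and necessary.

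The one genuine soft spot is your justification of the structural lemma via ``$\Phi$-compatible'' pairs of connections. A $TQ$-connection on $A$ and a $TQ'$-connection on $A'$ live over different manifolds, and when the base map $\phi\colon Q\to Q'$ is not a surjective submersion (it can be, e.g., constant on open sets), a compatible pair need not exist even locally, so ``arranged locally and patched with a partition of unity'' does not go through as stated. Moreover, for a morphism over a non-identity base map, sections of $A$ do not push forward to sections of $A'$, so ``the bracket-preserving property'' must be phrased via decompositions $\Phi\circ X = \sum_i f_i\,(X_i'\circ\phi)$, which your vertical-component matching silently skips. The robust route --- which you gesture at but do not use --- is the Crainic--Fernandes homotopy characterization: $X_{b,a}$ is the infinitesimal version of an $A$-homotopy, i.e., a Lie algebroid morphism $T(I\times I)\to A$ of the form $a\,\mathrm{d}t + b\,\mathrm{d}\epsilon$, and composing such a morphism with $\Phi$ immediately yields the $A'$-homotopy $a'\,\mathrm{d}t + b'\,\mathrm{d}\epsilon$ generating $X_{b',a'}$, with no choice of connection required. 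Replacing your compatible-connections paragraph by this argument closes the gap; everything else in your write-up then stands.
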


\begin{proof} See Mart\'{\i}nez~\cite[Theorems 5--6]{Martinez2008}. This generalized results by Weinstein~\cite[Theorems~4.8 and~4.5, respectively]{Weinstein1996} for regular Lagrangians, where the converse also required the stronger assumption that $\Phi$ be a f\/iberwise isomorphism.
\end{proof}

For example, if $G$ is a Lie group acting freely and properly on $Q$, then the quotient morphism $ T Q \rightarrow T Q / G $ is a Lie algebroid morphism, and the corresponding reduction theory is just classical Lagrangian reduction. However, there is a much more general class of quotient morphisms~-- for f\/ibered manifolds~-- that bear directly on reduction theory, and this is the topic of the next section.

\section{Lie groupoid symmetries and reduction on f\/ibered manifolds}\label{sec:reduction}

In this section, we recall the def\/inition of a \emph{Lie groupoid} $ G \rightrightarrows M $ and of a free, proper Lie groupoid action on a f\/ibered manifold $ Q \rightarrow M $ over the same base manifold. We then show that there is a quotient morphism $ V Q \rightarrow V Q / G $, which is a Lie algebroid morphism, and hence applying Theorem~\ref{thm:weinsteinMartinez} yields a reduction theory for f\/ibered Lagrangian mechanics. This generalizes the special
case $ M = \bullet $, in which $G$ is a Lie group acting on an ordinary manifold $Q$ and the quotient morphism $ T Q \rightarrow T Q / G $ is the one used in ordinary Lagrangian reduction.

\subsection{Lie groupoids}

Just as it is natural to consider Lie group actions on ordinary manifolds, it is natural to consider \emph{Lie groupoid} actions on f\/ibered manifolds. We begin by recalling the def\/inition of a Lie groupoid and a groupoid action, as well as giving a few examples. We then prove that, just as a free and proper Lie group action on an ordinary manifold $Q$ lifts to $ T Q $, so, too, does a free and proper Lie groupoid action on a f\/ibered manifold $ Q \rightarrow M $
lift to $ V Q $.

\begin{Definition} \label{def:groupoid} A \emph{groupoid} is a small category in which every morphism is invertible. Specif\/ically, a groupoid denoted
 $ G \rightrightarrows M $ consists of a space of morphisms $G$, a~space of objects $M$, and the following structure maps:
\begin{enumerate}[label=(\roman*)]\itemsep=0pt
\item a \emph{source map} $ \alpha \colon G \rightarrow M $ and \emph{target map} $ \beta \colon G \rightarrow M $;
\item a \emph{multiplication map} $ m \colon G \mathbin{_\alpha \times _\beta } G \rightarrow G $, $ ( g, h ) \mapsto g h $;
\item an \emph{identity section} $ \epsilon \colon M \rightarrow G $, such that for all $ g \in G $,
 \begin{gather*}
 g \epsilon ( \alpha (g) \bigr) = g = \epsilon \bigl( \beta (g)\bigr) g ;
 \end{gather*}
\item and an \emph{inversion map} $ i \colon G \rightarrow G $, $ g \mapsto g ^{-1} $, such that for all $ g \in G $,
 \begin{gather*}
 g ^{-1} g = \epsilon \bigl( \alpha (g) \bigr) , \qquad g g ^{-1} = \epsilon \bigl( \beta (g) \bigr) .
 \end{gather*}
\end{enumerate}
A \emph{Lie groupoid} is a groupoid $ G \rightrightarrows M $ where $G$ and $M$ are smooth manifolds, $\alpha$ and $\beta$ are submersions, and $m$ is smooth.
\end{Definition}

\begin{Remark} A few other properties of the structure maps are immediate from this def\/inition of a Lie groupoid: in particular, it also follows that
 $m$ is a submersion, $\epsilon$ is an immersion, and $i$ is a~dif\/feomorphism.
\end{Remark}

\begin{Example} \label{ex:lieGroup} A Lie group is a Lie groupoid $ G \rightrightarrows \bullet $ over a~single point.
\end{Example}

\begin{Example} \label{ex:pairGroupoid} If $Q$ is a smooth manifold, then the \emph{pair groupoid} $ Q \times Q \rightrightarrows Q $, def\/ined by the structure maps
 \begin{gather*}
 \alpha ( q _1 , q _0 ) = q _0 , \qquad \beta ( q _1 , q _0 ) = q _1 , \qquad m \bigl( ( q _2 , q _1 ) , ( q _1 , q _0 ) \bigr) = ( q _2 , q _0 ) ,\\
 \epsilon (q) = ( q, q ) , \qquad i ( q _1 , q _0 ) = ( q _0, q _1 ) ,
 \end{gather*}
 is a Lie groupoid. More generally, if $ \mu \colon Q \rightarrow M $ is a f\/ibered manifold and
 \begin{gather*}
 Q \mathbin{ _\mu \times _\mu } Q = \bigl\{ ( q _1, q _0 ) \in Q \times Q\colon \mu ( q _1 ) = \mu ( q _2 ) \bigr\},
 \end{gather*}
 then $ Q \mathbin{ _\mu \times _\mu } Q \rightrightarrows Q $ is also a Lie groupoid, and its structure maps are just the restrictions of those above for $ Q \times Q \rightrightarrows Q $. We then say that $ Q \mathbin{ _\mu \times _\mu } Q \rightrightarrows Q $ is a~\emph{Lie subgroupoid} of $ Q \times Q \rightrightarrows Q $.
\end{Example}

\begin{Example} \label{ex:gaugeGroupoid} Let $G$ be a Lie group and $ Q \rightarrow Q / G $ be a principal $G$-bundle, i.e., $G$ acts freely and properly on $Q$. The diagonal action of $G$ on $ Q \times Q $ is also free and proper, so we may form the quotient $ ( Q \times Q ) / G $. Let $ [ q ] \in Q / G $
 denote the orbit of $ q \in Q $ and $ [ q _1 , q _0 ] \in ( Q \times Q ) / G $ denote the orbit of $ ( q _1 , q _0 ) \in Q \times Q $. Then the \emph{gauge groupoid} (or \emph{Atiyah groupoid}) $ ( Q \times Q ) / G \rightrightarrows Q / G $ of the principal bundle is def\/ined by the structure maps
 \begin{gather*}
 \alpha \bigl( [ q _1 , q _0 ] \bigr) = [q _0] , \qquad \beta \bigl( [ q _1, q _0 ] \bigr) = [q _1] , \qquad m \bigl( [q _2 , q
 _1 ] , [ q _1 , q _0 ] \bigr) = [ q _2 , q _0 ] ,\\
 \epsilon \bigl( [q] \bigr) = [ q, q ] , \qquad i \bigl( [ q _1 , q _0] \bigr) = [q _0, q _1].
 \end{gather*}
 Notice that $ G \rightrightarrows \bullet $ is the special case where $ Q = G $ acts on itself by multiplication, while $ Q \times Q \rightrightarrows Q $ is the special case where $ G = \{ e \} $ acts trivially on~$Q$.
\end{Example}

\begin{Definition} \label{def:groupoidAction}
A \emph{left action} (or just \emph{action}) of a Lie groupoid $ G \rightrightarrows M $ on a f\/ibered manifold $ Q \rightarrow M $ is a smooth map $ G \mathbin{ _\alpha \times _\mu } Q \rightarrow Q $, $ ( g, q ) \mapsto g q $, such that
 \begin{enumerate}[label=(\roman*)]\itemsep=0pt
 \item $ \mu ( g q ) = \beta (g) $ for all $ ( g, q ) \in G \mathbin{ _\alpha \times _\mu } Q $,
 \item $ g ( h q ) = ( g h ) q $ for all $ ( g, h , q ) \in G \mathbin{ _\alpha \times _\beta } G \mathbin{ _\alpha \times _\mu } Q $, and
 \item $ \epsilon \bigl( \mu (q) \bigr) q = q $ for all $ q \in Q $.
 \end{enumerate}
 The action is \emph{free} if $ g q = q $ implies $ g = \epsilon \bigl( \mu (q) \bigr) $, and it is \emph{proper} if its graph,
 \begin{gather*}
 G \mathbin{ _\alpha \times _\mu } Q \rightarrow Q \times Q , \qquad ( g, q ) \mapsto ( g q, q ) ,
 \end{gather*}
is a proper map. A \emph{principal $G$-space} is a f\/ibered manifold endowed with a free and proper $G$-action.
\end{Definition}

\begin{Remark} \label{rmk:freeProperQuotient}
As with group actions, it can be shown that if $G \rightrightarrows M $ acts freely and properly on $Q \rightarrow M $, then the quotient $ Q / G $ consisting of $G$-orbits is a smooth manifold, and there is a~smooth quotient map $ Q \rightarrow Q / G $. We refer to Dufour and Zung~\cite[Chapter~7]{DuZu2005} for a~more detailed discussion of this and other properties of groupoid actions.
\end{Remark}

\begin{Example} \label{ex:groupAction}The action of a Lie group $G$ on a manifold $Q$ is precisely the action of the Lie groupoid $ G \rightrightarrows \bullet $ on the f\/ibered manifold $ Q \rightarrow \bullet $. If the action is free and proper, then the associated principal $G$-space corresponds to the principal $G$-bundle $Q \rightarrow Q /G$.
\end{Example}

\begin{Example} \label{ex:pairAction}
For any smooth manifold $Q$, the pair groupoid $ Q \times Q \rightrightarrows Q $ acts on $Q$ by $ ( q _1 , q _0 ) q _0 = q _1 $. (In this case, we treat $Q$ as the f\/ibered manifold $ Q \rightarrow Q $, rather than $ Q \rightarrow \bullet $.) Since any two points $ q _0$, $q _1 $ lie in the same orbit, it follows that $ Q / ( Q \times Q ) \cong \bullet $, and the quotient map is simply $ Q \rightarrow \bullet $.
\end{Example}

\begin{Example} Let $G$ be a Lie group acting freely and properly on $Q$, so that $ Q \rightarrow Q / G $ is a~principal $G$-bundle. Then the gauge groupoid $ ( Q \times Q ) / G $ acts on $ Q \rightarrow Q / G $, in the sense of Def\/inition~\ref{def:groupoidAction}, and is uniquely def\/ined by the condition $ [q _1, q _0 ] q _0 = q _1 $. (Notice that Example~\ref{ex:pairAction} is the special case where $ G = \{ e \} $ acts trivially on $Q$.) Again, we see that any two points $ q _0 , q _1 \in Q $ lie in the same orbit, so $ Q / \bigl( ( Q \times Q ) / G \bigr) \cong \bullet $, and the quotient map is $ Q \rightarrow \bullet $.
\end{Example}

\begin{Example} \label{ex:multiplicationAction} For any Lie groupoid $ G \rightrightarrows M $, the multiplication map $m$ is an action of~$G$ on itself, treated as the f\/ibered manifold $ \beta \colon G \rightarrow M $. This action is free, since $ g h = h $ implies $ g = (g h) h ^{-1} = h h ^{-1} = \epsilon \bigl( \beta (h) \bigr) $. Moreover, the action is proper: $( g , h ) \mapsto ( gh , h ) $ is a~dif\/feomorphism, having the inverse $ ( g, h ) \mapsto ( g h ^{-1} , h ) $, so in particular it is a proper map.

The orbit of each $ h \in G $ is its $\alpha$-f\/iber $ \alpha ^{-1} \bigl( \{ x \} \bigr) $, where $ x = \alpha (h) $. Identifying the f\/iber $ \alpha ^{-1} \bigl( \{ x \} \bigr) $ with the corresponding base point $ x \in M $, it follows that $ G / G \cong M $, and the quotient map is just $ \alpha \colon G \rightarrow M $.
\end{Example}

\begin{Example} If $ G \rightrightarrows M $ acts on $ Q \rightarrow M $, then it also acts on $ V Q \rightarrow M $, considered as a~f\/ibered manifold. Specif\/ically, we have the action
 \begin{gather*}
 G \mathbin{ _\alpha \times _{\mu \circ \tau}} V Q \rightarrow V Q , \qquad ( g , v ) \mapsto g _\ast v ,
 \end{gather*}
 where $ g _\ast $ denotes the pushforward of $ q \mapsto gq $.
\end{Example}

\begin{Lemma} \label{lem:quotientGroupoid}
Suppose $ G \rightrightarrows M $ has a free, proper action on $ Q \rightarrow M $. Then its diagonal action on $ Q \mathbin{ _\mu \times _\mu } Q \rightarrow M $, given by $ g ( q _1 , q _0 ) = ( g q _1 , g q _0 ) $, is also free and proper. Moreover, the quotient can be given a natural Lie group
 structure $ ( Q \mathbin{ _\mu \times _\mu } Q ) / G \rightrightarrows Q / G $, and the quotient map $ Q \mathbin{ _\mu \times _\mu } Q \rightarrow ( Q \mathbin{ _\mu \times _\mu } Q ) / G $ is a morphism of Lie groupoids over $ Q \rightarrow Q / G $.
\end{Lemma}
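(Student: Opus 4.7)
The plan is: check that the diagonal action is well-defined, free, and proper; invoke Remark~\ref{rmk:freeProperQuotient} for the smooth structure on the quotient; and then descend the groupoid operations of $Q \mathbin{_\mu \times _\mu} Q \rightrightarrows Q$ (Example~\ref{ex:pairGroupoid}) by checking equivariance.

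First, the diagonal formula $g(q_1,q_0) = (gq_1,gq_0)$ lands in $Q \mathbin{_\mu \times _\mu} Q$ because both components are sent to $\beta(g)$ by $\mu$. Freeness follows at once from freeness of the original action: $g(q_1,q_0) = (q_1,q_0)$ implies $gq_0 = q_0$, hence $g = \epsilon(\mu(q_0))$. For properness, let $K$ be compact in $(Q \mathbin{_\mu \times _\mu} Q) \times (Q \mathbin{_\mu \times _\mu} Q)$. Projecting $K$ onto the ``$gq_0$'' and ``$q_0$'' factors gives a compact set in $Q\times Q$ whose preimage under the original graph $(g,q)\mapsto(gq,q)$ is compact by the assumed properness; this pins both $g$ and $q_0$ to compact sets. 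The remaining coordinate $q_1$ is constrained to the compact image of $K$ on the relevant $Q$-factor together with the closed condition $\mu(q_1)=\mu(q_0)$. Hence the preimage of $K$ under the diagonal graph is contained in a compact set and is itself closed, so compact.

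By Remark~\ref{rmk:freeProperQuotient}, $(Q \mathbin{_\mu \times _\mu} Q)/G$ is then a smooth manifold and the quotient map is a smooth surjective submersion. The structure maps $\alpha,\beta,m,\epsilon,i$ of $Q \mathbin{_\mu \times _\mu} Q \rightrightarrows Q$ are all $G$-equivariant with respect to the diagonal action on the groupoid and the original action on the base $Q$, which is immediate from the explicit formulas: $\alpha(gq_1,gq_0) = gq_0 = g\alpha(q_1,q_0)$, $\beta(gq_1,gq_0) = g\beta(q_1,q_0)$, $m\bigl(g(q_2,q_1),g(q_1,q_0)\bigr) = (gq_2,gq_0) = g\,m\bigl((q_2,q_1),(q_1,q_0)\bigr)$, $\epsilon(gq) = (gq,gq) = g\epsilon(q)$, and $i(g(q_1,q_0)) = (gq_0,gq_1) = g\,i(q_1,q_0)$. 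Each equivariant map descends uniquely and smoothly to the quotient; submersivity of the descended source and target is inherited because both quotient maps are surjective submersions; and the groupoid axioms hold on the quotient because they hold before the quotient and the quotient maps are surjective. The resulting $(Q \mathbin{_\mu \times _\mu} Q)/G \rightrightarrows Q/G$ is thus a Lie groupoid, and the pair of quotient maps is a Lie groupoid morphism over $Q \to Q/G$ precisely because equivariance of the structure maps is the defining compatibility of a groupoid morphism.

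I expect the main obstacle to be the bookkeeping for properness of the diagonal action, given that one must carefully navigate the fiber-product constraints defining $Q \mathbin{_\mu \times _\mu} Q$. A secondary technical point, needed for smoothness of descended multiplication, is to identify the fiber product of two copies of $(Q \mathbin{_\mu \times _\mu} Q)/G$ over $Q/G$ with the quotient of the triple fiber product $(Q \mathbin{_\mu \times _\mu} Q) \mathbin{_\alpha \times _\beta} (Q \mathbin{_\mu \times _\mu} Q)$ by the appropriate diagonal $G$-action; this follows from the same freeness and properness analysis applied one more time, but is worth making explicit.
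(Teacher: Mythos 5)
Your proposal is correct and follows essentially the same route as the paper: deduce freeness and properness of the diagonal action from those of the original action, invoke the quotient-manifold result, and transfer the explicit structure maps of $Q \mathbin{_\mu \times _\mu} Q \rightrightarrows Q$ to the quotient, observing that the quotient map then preserves them. You actually supply more detail than the paper does (which simply asserts that freeness and properness ``follow immediately''), and your compact-set chase for properness and your remark about identifying the fiber product of quotients with the quotient of the fiber product are both sound.
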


\begin{proof} The fact that $ \bigl( g, ( q _1 , q _0 )\bigr) \mapsto ( g q _1 , g q _0 ) $ is a free and proper groupoid action follows immediately from the fact that, by assumption, $ ( g, q) \mapsto g q $ is. As stated in Remark~\ref{rmk:freeProperQuotient}, the freeness and properness of these actions imply that $ Q / G $ and $ (Q \mathbin{ _\mu \times _\mu } Q)/G $ are smooth manifolds, so it suf\/f\/ices to specify the groupoid structure maps for $ (Q \mathbin{ _\mu \times _\mu } Q)/G \rightrightarrows Q / G $. These may be taken to be formally identical to those for the gauge groupoid in Example~\ref{ex:gaugeGroupoid}, i.e.,
 \begin{gather*}
 \alpha \bigl( [ q _1 , q _0 ] \bigr) = [q _0] , \qquad \beta \bigl( [ q _1, q _0 ] \bigr) = [q _1] , \qquad m \bigl( [q _2 , q
 _1 ] , [ q _1 , q _0 ] \bigr) = [ q _2 , q _0 ] ,\\
 \epsilon \bigl( [q] \bigr) = [ q, q ] , \qquad i \bigl( [ q _1 , q _0] \bigr) = [q _0, q _1].
 \end{gather*}
As with the gauge groupoid, it is simple to check directly that these satisfy the conditions of Def\/inition~\ref{def:groupoid}, so this is a~Lie groupoid. Finally, using $ \widetilde{ \alpha } $, $ \widetilde{ \beta } , \ldots $ to denote the structure maps on $ Q \mathbin{ _\mu \times _\mu } Q \rightrightarrows Q $, we observe that
 \begin{gather*}
\alpha \bigl( [ q _1 , q _0 ] \bigr) = \bigl[ \widetilde{ \alpha } ( q _1, q _0 ) \bigr] , \qquad \beta \bigl( [ q _1, q _0 ] \bigr) = \bigl[
\widetilde{ \beta } ( q _1, q _0 ) \bigr] , \\
 m \bigl( [q _2 , q _1 ] , [ q _1 , q _0 ] \bigr) = \bigl[ \widetilde{ m } \bigl((q _2, q _1), (q_1, q _0 ) \bigr) \bigr] ,\\
\epsilon \bigl( [q] \bigr) = \bigl[ \widetilde{ \epsilon } (q) \bigr] , \qquad i \bigl( [ q _1 , q _0] \bigr) = \bigl[\widetilde{ \imath } (q _1, q _0 ) \bigr] ,
 \end{gather*}
so the quotient map preserves the structure maps and hence is a Lie groupoid morphism.
\end{proof}

\begin{Lemma} \label{lem:freeProper}
The action of a Lie groupoid $ G \rightrightarrows M $ on $Q \rightarrow M$ is free $($resp., proper$)$ if and only if the induced action on $ V Q \rightarrow M $ is free $($resp., proper$)$.
\end{Lemma}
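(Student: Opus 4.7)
My plan is to prove the freeness and properness equivalences separately, using two canonical maps between $ Q $ and $ V Q $: the zero section $ s \colon Q \rightarrow V Q $ (a closed embedding) and the bundle projection $ \tau \colon V Q \rightarrow Q $. The central observation to exploit is that, for each $ g \in G $, the orbit map $ q \mapsto g q $ is a diffeomorphism from $ Q _{ \alpha (g) } $ to $ Q _{ \beta (g) } $ with smooth inverse $ q \mapsto g ^{-1} q $, so its pushforward $ g _\ast $ is a linear isomorphism on vertical tangent spaces; in particular, $ g _\ast v = 0 $ if and only if $ v = 0 $.

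For freeness, both directions should be essentially immediate. If the $ Q $-action is free and $ g _\ast v = v $ for some $ v \in V _q Q $, applying $ \tau $ yields $ g q = q $, which forces $ g = \epsilon \bigl( \mu (q) \bigr) $. Conversely, if the $ V Q $-action is free and $ g q = q $, I would test freeness against the zero vector: $ g _\ast 0 _q = 0 _{ g q } = 0 _q $, which again forces $ g = \epsilon \bigl( \mu (q) \bigr) $.

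For properness I would use the characterization ``preimage of compact is compact.'' The easier direction is $ V Q $-proper $ \Rightarrow $ $ Q $-proper: given compact $ K \subset Q \times Q $, I push it forward along the closed embedding $ s \times s $ to obtain a compact set $ \widetilde{ K } \subset V Q \times V Q $, and then apply $ V Q $-properness to the preimage of $ \widetilde{ K } $ under the graph of the $ V Q $-action. Because $ g _\ast $ is a linear isomorphism on fibers, this preimage is forced to lie on the zero section, so it is homeomorphic (via the closed embedding $ ( g, q ) \mapsto ( g , 0 _q ) $) to the target preimage $ \{ ( g , q ) \colon ( g q , q ) \in K \} $, which must therefore be compact.

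The converse direction, $ Q $-proper $ \Rightarrow $ $ V Q $-proper, is where I expect the main obstacle. My approach is, for compact $ K \subset V Q \times V Q $, to first pass to $ L = ( \tau \times \tau )( K ) $, compact in $ Q \times Q $, and use $ Q $-properness to produce a compact $ P = \{ ( g , q ) \colon ( g q , q ) \in L \} $. The target preimage $ S = \{ ( g , v ) \colon ( g _\ast v , v ) \in K \} $ is closed as the inverse image of $K$, and the map $ ( g , v ) \mapsto \bigl( ( g , \tau v ) , v \bigr) $ should embed it as a closed subset of the compact space $ P \times \operatorname{pr} _2 ( K ) $, whence $S$ is compact. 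The delicate part is carefully tracking the fibered-product topology to verify that the image of $S$ really is closed inside this compact product, as the compatibility constraints $ \alpha (g) = \mu ( \tau v ) $ and $ \tau v = q $ across the two fibered products need to be handled with care.
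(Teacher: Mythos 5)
Your proposal is correct and follows essentially the same route as the paper's proof: freeness is handled by projecting along $\tau$ in one direction and testing against the zero vector $0_q$ in the other, and properness is handled by transporting compact sets along the zero section $0 \times 0$ (your $s \times s$) and the projection $\tau \times \tau$, bounding the preimage inside a product of compact sets. The only cosmetic difference is that your appeal to $g_\ast$ being a fiberwise linear isomorphism is not actually needed to see that the relevant preimage lies on the zero section (one factor of the graph is $v$ itself, so membership in $(s \times s)(K)$ already forces $v$ to be a zero vector), and the closedness concern you flag at the end is resolved exactly as the paper does it: the preimage is closed in the ambient fibered product and contained in a product of compact sets, hence compact.
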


\begin{proof} If $G$ acts freely on $Q$, then $ g _\ast v = v $ implies $ g \bigl( \tau (v) \bigr) = \tau (v) $, so $ g = \epsilon \bigl( \mu \bigl( \tau (v) \bigr) \bigr)= \epsilon \bigl( (\mu \circ \tau) (v) \bigr) $, and hence $G$ acts freely on $ V Q $. Conversely, if $G$ acts freely on $ V Q $, then $ g q = q $ implies $ g _\ast 0 _q = 0 _q $ so $ g = \epsilon \bigl( ( \mu \circ \tau ) ( 0 _q ) \bigr) = \epsilon \bigl( \mu (q) \bigr) $, and hence $G$ acts freely on~$Q$.

The proof of properness essentially amounts to chasing compact sets around the following diagram:
 \begin{equation*}
 \begin{tikzcd}
 G \mathbin{ _\alpha \times _{ \mu \circ \tau } } V Q \arrow[r]
 \arrow[d, shift left=1ex, "\mathrm{id} \times \tau"] & VQ
 \mathbin{ _{\mu \circ \tau } \times _{ \mu \circ
 \tau } } V Q \arrow[d, shift left=1ex, "\tau \times \tau"] \\
 G \mathbin{ _\alpha \times _\mu } Q \arrow[r] \arrow[u, shift
 left=1ex, "\mathrm{id} \times 0 "] & Q \mathbin{ _\mu \times
 _\mu } Q \arrow[u, shift left=1ex, "0 \times 0"] .
 \end{tikzcd}
 \end{equation*}
 First, suppose $G$ acts properly on $Q$. If $ K \subset V Q \mathbin{ _{\mu \circ \tau } \times _{\mu \circ \tau}} V Q $ is compact, then we wish to show that the preimage,
 \begin{gather*}
 \bigl\{ ( g, v ) \in G \mathbin{ _\alpha \times _{ \mu \circ \tau } } V Q \colon ( v, g _\ast v) \in K \bigr\} ,
 \end{gather*}
is also compact. Observe that $ \bigl\{ v \in V Q \colon ( v, g _\ast v) \in K \bigr\} $ is compact by the continuity of $ ( v, g _\ast v ) \mapsto v $, and $ \bigl\{ g \in G \colon ( v, g _\ast v) \in K \bigr\} $ is compact by the continuity of $ ( v, g _\ast v ) \mapsto ( q, g q ) $, with $ q = \tau (v) $, the properness of $ ( g, q ) \mapsto ( q, gq ) $, and the continuity of $ ( g, q ) \mapsto g $. Hence, the preimage in question is also compact, so $G$ acts properly on $ V Q $.

 Conversely, suppose $G$ acts properly on $ V Q $. If $ K \subset Q \mathbin { _\mu \times _\mu } Q $ is compact, then so is $ \bigl\{ ( 0 _q , g _\ast 0 _q ) \in V Q \mathbin{ _{\mu \circ \tau } \times _{\mu \circ \tau}} V Q \colon ( q, g q ) \in K \bigr\} $, and by properness, so is $ \bigl\{ ( g, 0 _q ) \in G \mathbin{ _\alpha \times _{ \mu \circ \tau } } V Q \colon ( q, gq ) \in K \bigr\} $. Finally, the preimage,
 \begin{gather*}
 \bigl\{ ( g, q ) \in G \mathbin { _\alpha \times _\mu } Q \colon ( g, q ) \in K \bigr\},
 \end{gather*}
 is compact by the continuity of $ ( g, 0 _q ) \mapsto ( g, q ) $, so $G$ acts properly on $Q$.
\end{proof}

\subsection{Lie algebroid of a Lie groupoid}

Before discussing reduction by an arbitrary free and proper groupoid action, we f\/irst consider the important special case where a groupoid
acts on itself by left multiplication. (This can be thought of as the ``groupoid version'' of Euler--Poincar\'e reduction, which is the special case of Lagrange--Poincar\'e reduction where $ Q = G $ is a~Lie group.)

Recall from Example~\ref{ex:multiplicationAction} that a Lie groupoid $ G \rightrightarrows M $ acts freely and properly on itself (as the f\/ibered manifold $ \beta \colon G \rightarrow M $) by left multiplication. Lemma~\ref{lem:freeProper} implies that this induces a~free and proper action of $G$ on the $\beta$-vertical bundle $ V ^\beta G \rightarrow M $. (Since $G$ can be seen as a~f\/ibered manifold in two dif\/ferent ways, $ \alpha \colon G \rightarrow M $ and $ \beta \colon G \rightarrow M $, we denote the corresponding vertical bundles by $ V ^\alpha G $ and $ V ^\beta G $ to avoid any possible confusion.) Since the orbit of $ v \in V ^\beta _g G $ is uniquely determined by its representative at the identity section, $ ( g ^{-1} ) _\ast v \in V ^\beta _{ \epsilon ( \alpha (g) ) } G $, we can identify the quotient $ V ^\beta G / G $ with the vector bundle $ A G = V ^\beta _{ \epsilon (M) } G $ over $M$.

This vector bundle $ A G \rightarrow M $ is in fact a Lie algebroid, called the \emph{Lie algebroid of $G$}. The anchor map is given by the restriction of $ \alpha _\ast \colon T G \rightarrow T M $ to $ A G $. Furthermore, the identif\/ication of $ A G $ with $ V ^\beta G / G $ implies that sections $ X \in \Gamma ( A G ) $ correspond to $G$-invariant, $\beta$-vertical vector f\/ields $ \overleftarrow{ X } \in \mathfrak{X} _\beta (G) $, with $ \overleftarrow{ X } (g) = g _\ast X \bigl( \alpha (g) \bigr) $. The bracket $ [ X , Y ] $ of $ X, Y \in \Gamma ( A G ) $ is then def\/ined so that $ \overleftarrow{ [X,Y] } = [ \overleftarrow{ X }, \overleftarrow{ Y }] $, where the bracket on the right-hand side of this expression is just the Jacobi--Lie bracket of vector f\/ields on~$G$. (See Mackenzie~\cite{Mackenzie2005}.)

\begin{Example}Let $G$ be a Lie group, so that $ G \rightrightarrows \bullet $ is a Lie groupoid. Since $ \beta $ is trivial, we have $ V ^\beta G = T G $, and hence $ A G = T _e G = \mathfrak{g} \rightarrow \bullet $, where $ \mathfrak{g} $ is the Lie algebra of $G$ and $ e = \epsilon ( \bullet ) \in G $ is the identity element of $G$.
\end{Example}

\begin{Example} \label{ex:verticalAlgebroid} For the pair groupoid $ Q \times Q \rightrightarrows Q $, we have $ V ^\beta ( Q \times Q ) = T Q \times Q $, and hence $ A ( Q \times Q ) = T Q \mathbin{ _\tau \times } Q \cong T Q \rightarrow Q $.

More generally, if we consider the groupoid $ Q \mathbin{ _\mu \times _\mu } Q \rightrightarrows Q $ for a f\/ibered manifold $ Q \rightarrow M $, then $ V ^\beta ( Q \mathbin{ _\mu \times _\mu } Q ) = V Q \mathbin{ _{\mu \circ \tau } \times _\mu } Q $, and hence $ A ( Q \mathbin{ _\mu \times _\mu } Q ) = V Q \mathbin{ _\tau \times } Q \cong V Q \rightarrow Q $.
\end{Example}

\begin{Example} \label{ex:gaugeAlgebroid}
For the gauge groupoid $ ( Q \times Q ) / G \rightrightarrows Q / G $ of a principal bundle $ Q \rightarrow Q / G $, we have $ V ^\beta \bigl( ( Q \times Q ) / G \bigr) = (T Q \times Q) / G $, and hence $ A \bigl( ( Q \times Q ) / G \bigr) = ( T Q \mathbin{ _\tau \times } Q ) / G \cong T Q / G \rightarrow Q / G $. This is called the \emph{gauge algebroid} (or \emph{Atiyah algebroid}) of the principal bundle.

More generally, considering the groupoid $ (Q \mathbin{ _\mu \times _\mu } Q) / G \rightrightarrows G $ of a~principal $G$-space, we have $ V ^\beta \bigl( (Q \mathbin{ _\mu \times _\mu } Q) / G \bigr) = ( V Q \mathbin{ _{ \mu \circ \tau } \times _\mu } Q ) / G $, and hence $ A \bigl( (Q \mathbin{ _\mu \times _\mu } Q) / G \bigr) = ( V Q \mathbin{ _\tau \times } Q ) / G \cong V Q / G \rightarrow Q / G $.
\end{Example}

\begin{Remark} The relationship between a groupoid $G$ and its algebroid $ A G $ has an interesting application to the \emph{discretization} of Lagrangian mechanics, which can be used to develop structure-preserving numerical integrators. In this approach, pioneered by Weinstein~\cite{Weinstein1996} (see also Marrero et al.~\cite{MaMaMa2006,MaMaSt2015}, Stern~\cite{Stern2010}), one replaces the Lagrangian $ L \colon A G \rightarrow \mathbb{R} $ by a \emph{discrete Lagrangian} $ L _h \colon G \rightarrow \mathbb{R} $, replaces $AG$-paths by sequences of composable arrows in $G$, and uses a variational principle to derive discrete equations of motion. In particular, using $ G = Q \times Q \rightrightarrows Q $
to discretize $ A G = T Q \rightarrow Q $ gives the framework of \emph{variational integrators} (cf.\ Moser and Veselov~\cite{MoVe1991}, Marsden and West~\cite{MaWe2001}).
\end{Remark}

\subsection{Reduction by a groupoid action}

Recall from Lemma~\ref{lem:freeProper} that if $ G \rightrightarrows M $ acts freely and properly on $ Q \rightarrow M $, then it also acts freely and properly on $ V Q \rightarrow M $. In other words, $ V Q $ is also a principal $G$-space, equipped with a quotient map $ V Q \rightarrow V Q / G $. We have seen that $ V Q $ is also a Lie algebroid, and moreover, in Example~\ref{ex:verticalAlgebroid}, that it is the Lie algebroid of the Lie groupoid $ Q \mathbin{ _\mu \times _\mu } Q \rightrightarrows Q $. Similarly, from Example~\ref{ex:gaugeAlgebroid}, we have that $ V Q / G $ is the Lie algebroid of the Lie groupoid $ (Q \mathbin{ _\mu \times _\mu } Q)/G \rightrightarrows Q/G $.

Therefore, in order to perform reduction using Theorem~\ref{thm:weinsteinMartinez}, it suf\/f\/ices to show that the quotient map $ V Q \rightarrow V Q / G $ is in fact a Lie algebroid morphism.

\begin{Lemma} \label{lem:quotientMorphism}
Let $ G \rightrightarrows M $ be a Lie groupoid and $ Q \rightarrow M $ a principal $G$-space. Then the quotient map $ V Q \rightarrow V Q / G $ is a Lie algebroid morphism covering $ Q \rightarrow Q / G $.
\end{Lemma}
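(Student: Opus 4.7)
The plan is to derive the assertion as an instance of the \emph{Lie functor}, which converts morphisms of Lie groupoids into morphisms of their Lie algebroids (cf.\ Mackenzie~\cite{Mackenzie2005}). By Lemma~\ref{lem:quotientGroupoid}, the orbit projection $\pi \colon Q \mathbin{_\mu \times _\mu} Q \rightarrow (Q \mathbin{_\mu \times _\mu} Q)/G$ is a morphism of Lie groupoids covering $\pi_Q \colon Q \rightarrow Q/G$. Example~\ref{ex:verticalAlgebroid} identifies $A(Q \mathbin{_\mu \times _\mu} Q)$ with $VQ$, and Example~\ref{ex:gaugeAlgebroid} identifies $A((Q \mathbin{_\mu \times _\mu} Q)/G)$ with $VQ/G$; modulo these identifications, it remains only to verify that the induced algebroid morphism $A\pi$ coincides with the vector-bundle quotient $VQ \rightarrow VQ/G$.

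First I would recall the explicit construction of the Lie functor: for any Lie groupoid morphism $\Phi \colon G \rightarrow G'$ covering $f \colon M \rightarrow M'$, one has $\beta' \circ \Phi = f \circ \beta$ and $\Phi \circ \epsilon = \epsilon' \circ f$, so $\Phi_\ast$ sends $V^\beta G |_{\epsilon(M)} = AG$ into $V^{\beta'} G' |_{\epsilon'(M')} = AG'$. The restriction $A\Phi := \Phi_\ast |_{AG}$ is then automatically a vector-bundle morphism covering $f$ that intertwines the anchors (each obtained from the respective $\alpha$-differential) and, via the bijection between sections of $AG$ and left-invariant $\beta$-vertical vector fields on $G$, preserves the Lie brackets of sections.

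Next I would specialize this construction to $\Phi = \pi$ with $f = \pi_Q$. A typical element of $AG \cong VQ$ is a vector $v \in V_q Q$, regarded as $(0,v) \in V^\beta_{(q,q)}(Q \mathbin{_\mu \times _\mu} Q)$. Its image under $\pi_\ast$ is the tangent vector at $\epsilon([q]) = [(q,q)]$ represented by the equivalence class $[(0,v)]$; tracking this through the identification of Example~\ref{ex:gaugeAlgebroid}, it corresponds to $[v] \in VQ/G$. Hence $A\pi$ is precisely the quotient map $VQ \rightarrow VQ/G$, and the Lie-functor output already supplies the required anchor- and bracket-compatibility.

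The main obstacle is essentially bookkeeping: one has to check that the natural identifications in Examples~\ref{ex:verticalAlgebroid} and~\ref{ex:gaugeAlgebroid} are compatible at source and target so that the abstract $A\pi$ really is the concrete vector-bundle quotient $VQ \rightarrow VQ/G$. Once this diagram chase is carried out, there is no further work to do, since the general theory of the Lie functor guarantees that $A\pi$ is a Lie algebroid morphism covering $\pi_Q$.
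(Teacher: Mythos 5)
Your proposal is correct and follows essentially the same route as the paper: the paper also invokes the Lie functor (citing Mackenzie, Proposition~4.3.4) applied to the Lie groupoid morphism $ Q \mathbin{_\mu \times _\mu} Q \rightarrow (Q \mathbin{_\mu \times _\mu} Q)/G $ from Lemma~\ref{lem:quotientGroupoid}, using the identifications of Examples~\ref{ex:verticalAlgebroid} and~\ref{ex:gaugeAlgebroid}. Your additional verification that the induced map $A\pi$ really is the concrete quotient $ VQ \rightarrow VQ/G $ is a detail the paper leaves implicit, so your write-up is if anything slightly more complete.
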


\begin{proof}We can use a result stated in Mackenzie~\cite[Proposition~4.3.4]{Mackenzie2005}, which says that a~morphism of Lie groupoids $ G \rightarrow G ^\prime $ induces a corresponding morphism of Lie algebroids $ A G \rightarrow A G ^\prime $. This def\/ines the so-called \emph{Lie functor} between the categories of Lie groupoids and Lie algebroids, taking objects $ G \mapsto A G $ and morphisms $ ( G \rightarrow G ^\prime ) \mapsto ( A G \rightarrow A G
 ^\prime ) $.

Now, we have already proved in Lemma~\ref{lem:quotientGroupoid} that the quotient map $ Q \mathbin{ _\mu \times _\mu } Q \rightarrow ( Q \mathbin{ _\mu \times _\mu } Q ) / G $ is a morphism of Lie groupoids, so applying the Lie functor to this morphism proves the result.
\end{proof}

\begin{Theorem} \label{thm:reduction}
Let $ G \rightrightarrows M $ be a Lie groupoid and $ Q \rightarrow M $ a principal $G$-space. Suppose the Lagrangian $ L \colon V Q \rightarrow \mathbb{R} $ is $G$-invariant, i.e., that it factors through the quotient morphism $ \Phi \colon V Q \rightarrow V Q / G $ as $ L = \ell \circ \Phi $, where $ \ell \colon V Q / G \rightarrow \mathbb{R} $ is called the reduced Lagrangian. Then $a \in \mathcal{P} _V (Q) $ is a solution path for $L$ if and only if $ \Phi \circ a \in \mathcal{P} _\rho ( V Q / G ) $ is a solution path for~$ \ell $.
\end{Theorem}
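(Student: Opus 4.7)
The plan is to reduce this statement to a direct invocation of Theorem~\ref{thm:weinsteinMartinez} (the Weinstein--Mart\'{\i}nez reduction theorem), using Lemma~\ref{lem:quotientMorphism} together with the identification between vertical paths and $VQ$-paths established in the ``Fibered manifolds revisited'' subsection.

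First I would set up the correspondence between objects. By the identification $\mathcal{P}_V(Q) \cong \mathcal{P}_\rho(VQ)$ given by tangent prolongation $a \mapsto (a, \dot a)$, a vertical $C^2$ path is the same data as a $VQ$-path. Moreover, by Theorem~\ref{thm:fiberedLagrangian} combined with the discussion in the revisited subsection, the solutions of the vertical Euler--Lagrange equations for $L \colon VQ \to \mathbb{R}$ are precisely the $VQ$-paths satisfying Hamilton's principle in the sense of Section~\ref{sec:algebroidLagrangianMechanics}. Thus the notions of ``solution path for $L$'' on both sides of the theorem coincide with those covered by Theorem~\ref{thm:weinsteinMartinez}.

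Next, I would apply Lemma~\ref{lem:quotientMorphism} to the principal $G$-space $Q \to M$, which tells us that the quotient map $\Phi \colon VQ \to VQ/G$ is a morphism of Lie algebroids covering $Q \to Q/G$. The hypothesis $L = \ell \circ \Phi$ of the theorem is exactly the compatibility condition required in Theorem~\ref{thm:weinsteinMartinez}. The forward direction ($\Phi \circ a$ solves $\ell$ implies $a$ solves $L$) then follows immediately. For the converse, I need $\Phi$ to be fiberwise surjective: but because the $G$-action on $Q$ is free, Lemma~\ref{lem:freeProper} gives a free $G$-action on $VQ$ with trivial stabilizers on each fiber $V_qQ$, so $\Phi$ restricts to a linear isomorphism $V_qQ \to (VQ/G)_{[q]}$ on each fiber, in particular surjective. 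Hence the full biconditional in Theorem~\ref{thm:weinsteinMartinez} applies.

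Combining these observations gives the theorem in one stroke. The only subtle point, and the place where one could slip up, is keeping straight the two distinct ``verticality'' conditions at play: the one defining $\mathcal{P}_V(Q)$ (lying in a single $\mu$-fiber) and the $\rho$-path condition for the Lie algebroid $VQ/G$ (whose base is $Q/G$, not $M$). These are reconciled by the identification $\mathcal{P}_V(Q) \cong \mathcal{P}_\rho(VQ)$ and the fact that $\Phi$ is a morphism of Lie algebroids covering the smooth quotient $Q \to Q/G$ obtained from Remark~\ref{rmk:freeProperQuotient}, so $\Phi \circ a$ automatically inherits the $\rho$-path property from $a$. No genuine obstacle arises; the content is really packaged into Lemmas~\ref{lem:quotientGroupoid}--\ref{lem:quotientMorphism} and Theorem~\ref{thm:weinsteinMartinez}.
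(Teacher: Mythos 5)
Your proposal is correct and follows essentially the same route as the paper, which proves the theorem by applying Theorem~\ref{thm:weinsteinMartinez} to the fiberwise-surjective Lie algebroid morphism of Lemma~\ref{lem:quotientMorphism}. The extra details you supply (the identification $\mathcal{P}_V(Q) \cong \mathcal{P}_\rho(VQ)$ and the freeness argument for fiberwise surjectivity) are consistent with the surrounding discussion the paper relies on implicitly.
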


\begin{proof}
Apply Theorem~\ref{thm:weinsteinMartinez} to the (f\/iberwise-surjective) Lie algebroid morphism def\/ined in Lemma~\ref{lem:quotientMorphism}.
\end{proof}

\begin{Example} \label{ex:groupActionReduction}
When $G \rightrightarrows \bullet $ is a Lie group acting freely and properly on $ Q \rightarrow \bullet $, Theorem~\ref{thm:reduction} corresponds to ordinary Lagrangian reduction from $ T Q $ to $ T Q / G $, yielding the Lagrange--Poincar\'e equations of Section~\ref{sec:lagrangePoincare}. In the special case where $ Q = G $ acts on itself by multiplication, this gives Euler--Poincar\'e reduction from $ T G $ to $ T G / G \cong \mathfrak{g} $.
\end{Example}

\begin{Example}\label{ex:groupoidSelfReduction}
Suppose $G \rightrightarrows M $ is a Lie groupoid acting on itself by multiplication, so that the quotient morphism is $ \Phi \colon V ^\beta G \rightarrow V ^\beta G / G = A G $. If $ L \colon V ^\beta G \rightarrow \mathbb{R} $ and $ \ell \colon A G \rightarrow \mathbb{R} $ are Lagrangians satisfying $ L = \ell \circ \Phi $, then Theorem~\ref{thm:reduction} implies that the vertical Euler--Lagrange equations (Section~\ref{sec:EL}) on $ V ^\beta G $ reduce to the Euler--Lagrange--Poincar\'e equations (Section~\ref{sec:algebroids}) for the Lie algebroid $ A G $. (This special case appears in Weinstein~\cite[Theorem~5.3]{Weinstein1996}.) The even more special case where $ G \rightrightarrows \bullet $ is a Lie group again gives Euler--Poincar\'e reduction on the Lie algebra $ \mathfrak{g} $.
\end{Example}

\section{The Hamilton--Pontryagin principle and reduction}\label{sec:HP}

In this section, we extend the foregoing theory to the Hamilton--Pontryagin variational principle introduced by Yoshimura and Marsden~\cite{YoMa2006b} as a~generalization of Hamilton's variational principle. This principle is especially useful for the study of ``implicit Lagrangian systems'' that arise in mechanical and control systems with nonholonomic or Dirac constraints. (See also Yoshimura and Marsden~\cite{YoMa2006a} for the non-variational approach to such systems, as well as Yoshimura and Marsden~\cite{YoMa2007} for the associated reduction theory.)

We begin, in Section~\ref{sec:hpReview}, with a brief review of the Hamilton--Pontryagin principle for ordinary manifolds. We then generalize it, in Section~\ref{sec:hpFibered}, to f\/ibered manifolds and their (co)vertical bundles, as we did for Hamilton's principle in Section~\ref{sec:EL}. In Section~\ref{sec:hpAlgebroids}, we generalize the Hamilton--Pontryagin principle even further to mechanics on Lie algebroids and their duals, as was done for Hamilton's principle in Section~\ref{sec:algebroids}. Finally, in Section~\ref{sec:hpReduction}, we discuss reduction of the Hamilton--Pontryagin principle by Lie algebroid morphisms, as in the Weinstein--Mart\'inez reduction theorem (Theorem~\ref{thm:weinsteinMartinez}), and apply this to the special case of groupoid symmetries for a f\/ibered manifold, as in Theorem~\ref{thm:reduction}.

\subsection{Hamilton--Pontryagin principle for ordinary manifolds}\label{sec:hpReview}

We begin with a quick review of the Hamilton--Pontryagin principle for ordinary (non-f\/ibered) manifolds, as introduced in Yoshimura and Marsden~\cite{YoMa2006b}.

Let $ L \colon T Q \rightarrow \mathbb{R} $ be a Lagrangian. The \emph{Hamilton--Pontryagin action} is the functional $ S \colon \mathcal{P} ( T Q \oplus T ^\ast Q ) \rightarrow \mathbb{R} $ def\/ined, in f\/iber coordinates, by
\begin{gather*}
 S ( q, v , p ) = \int _0 ^1 \bigl( L \bigl( q (t) , v (t) \bigr) + \bigl\langle p (t) , \dot{q} (t) - v (t) \bigr\rangle \bigr) \,\mathrm{d}t .
\end{gather*}
Here, $ ( q, v, p ) $ is an arbitrary path in the \emph{Pontryagin bundle} $ T Q \oplus T ^\ast Q $. We emphasize that no restrictions are placed on this path~-- in particular, the second-order curve condition $ \dot{q} = v $ is not \emph{a priori} required.

The path $ ( q, v, p ) $ satisf\/ies the \emph{Hamilton--Pontryagin principle} if $ \mathrm{d} S ( \delta q, \delta v, \delta p ) = 0 $ for all variations $ ( \delta q, \delta v, \delta p ) \in T _{ ( q, v, p ) } \mathcal{P} ( T Q \oplus T ^\ast Q ) $ such that $ \delta q (0) = 0 $ and $ \delta q (1) = 0 $. (That is, the endpoints of $q$ are f\/ixed, while the endpoints of $v$ and $p$ are unrestricted.) In local coordinates, we have
\begin{gather*}
 \mathrm{d} S ( \delta q, \delta v , \delta p ) = \int _0 ^1 \left( \frac{ \partial L }{ \partial q ^i } ( q, v ) \delta q ^i + \frac{ \partial L }{ \partial v ^i } ( q, v ) \delta v ^i + p _i ( \delta \dot{q} ^i - \delta v ^i ) + \delta p _i ( \dot{q} ^i - v ^i ) \right) \mathrm{d}t \\
\hphantom{\mathrm{d} S ( \delta q, \delta v , \delta p )}{} = \int _0 ^1 \left[ \left( \frac{ \partial L }{ \partial q ^i } (
 q, v ) - \dot{p} _i \right) \delta q ^i + \left( \frac{ \partial L }{ \partial v ^i } ( q, v ) - p _i \right) \delta v ^i + \delta p _i
 ( \dot{q} ^i - v ^i ) \right] \mathrm{d}t .
\end{gather*}
Hence, this vanishes when $ ( q, v, p ) $ satisf\/ies the dif\/ferential-algebraic equations
\begin{gather*}
 \frac{ \partial L }{ \partial q ^i } ( q, v ) - \dot{p} _i = 0 , \qquad \frac{ \partial L }{ \partial v ^i } ( q, v ) - p _i = 0 , \qquad \dot{q} ^i - v ^i = 0 ,
\end{gather*}
which Yoshimura and Marsden~\cite{YoMa2006b} call the \emph{implicit Euler--Lagrange equations}. The three systems of equations correspond, respectively,
to the Euler--Lagrange equations, the Legendre transform, and the second-order curve condition. (Note that the conjugate momentum~$ p $ acts like a ``Lagrange multiplier'' enforcing the second-order curve condition $ \dot{q} = v $.)

In this sense, the Hamilton--Pontryagin approach generalizes and unif\/ies the symplectic and variational approaches to Lagrangian mechanics.

\subsection{Hamilton--Pontryagin for f\/ibered manifolds}\label{sec:hpFibered}

Suppose, more generally, that $ L \colon V Q \rightarrow \mathbb{R} $ is a Lagrangian on the vertical bundle of a f\/ibered manifold $ Q \rightarrow M $. Recall that $ V Q $ and $ V ^\ast Q $ can both be viewed as f\/ibered manifolds over $M$, and thus so can $ V Q \oplus V ^\ast Q $, which we call the \emph{vertical Pontryagin bundle}. It follows that we may def\/ine a Banach manifold of vertical paths $ \mathcal{P} _V ( V Q \oplus V ^\ast Q ) $ and its bundle of vertical variations $ V \mathcal{P} _V ( V Q \oplus V ^\ast Q ) $.

\begin{Definition}
Given a Lagrangian $ L \colon V Q \rightarrow \mathbb{R} $, the \emph{Hamilton--Pontryagin action} $ S \colon \mathcal{P} _V ( V Q \oplus V ^\ast Q ) \rightarrow \mathbb{R} $ is def\/ined, in f\/iber coordinates, by
 \begin{gather*}
 S (q, v, p) = \int _0 ^1 \bigl( L \bigl( q (t) , v (t) \bigr) + \bigl\langle p (t) , \dot{q} (t) - v (t) \bigr\rangle \bigr) \,\mathrm{d}t .
 \end{gather*}
 A vertical path $ ( q, v , p ) \in \mathcal{P} _V ( V Q \oplus V ^\ast Q ) $ is said to satisfy the \emph{Hamilton--Pontryagin principle} if $ \mathrm{d} S ( \delta q, \delta v , \delta p ) = 0 $ for all vertical variations $ ( \delta q, \delta v , \delta p ) \in V _{ ( q, v, p ) } \mathcal{P} _V ( V Q \oplus V ^\ast Q ) $ such that $ \delta q (0) = 0 $ and $ \delta q (1) = 0 $.
\end{Definition}

\begin{Theorem} A vertical path $ (q,v,p) \in \mathcal{P} _V ( V Q \oplus V ^\ast Q ) $ satisfies the Hamilton--Pontryagin principle if and only if, in fiber-adapted local coordinates $ q = ( x ^\sigma , y ^i ) $, it satisfies the \emph{implicit vertical Euler--Lagrange equations},
 \begin{gather} \label{eq:implicitVEL}
 \dot{x} ^\sigma = 0 , \qquad \dot{p} _i = \frac{ \partial L }{ \partial y ^i } ( q, v ) , \qquad p _i = \frac{ \partial L }{ \partial v ^i } ( q, v ) , \qquad \dot{y} ^i = v ^i .
\end{gather}
\end{Theorem}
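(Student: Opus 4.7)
The plan is to adapt the ordinary calculation reviewed in Section~\ref{sec:hpReview} to the fibered setting, essentially fiber-by-fiber, in the same spirit as the proof of Theorem~\ref{thm:fiberedLagrangian}.

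First, I would unpack what it means for $(q,v,p)$ to be a vertical path in $VQ \oplus V^\ast Q$, viewed as a fibered manifold over $M$. By the argument in the remark following the definition of $\mathcal{P}_V(Q)$, the base path $q$ must lie in a single fiber $Q_x$, so in fiber-adapted local coordinates $q = (x^\sigma, y^i)$ the coordinate $x^\sigma$ is constant in $t$, giving $\dot{x}^\sigma = 0$ outright. Moreover $v(t) \in V_{q(t)} Q$ and $p(t) \in V^\ast_{q(t)} Q$, so in these coordinates $v$ and $p$ have only $y^i$-components, i.e., $v = v^i \partial/\partial y^i$ and $p = p_i \, \mathrm{d} y^i$. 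Thus
\begin{gather*}
 S(q,v,p) = \int_0^1 \bigl( L(x,y,v) + p_i(\dot{y}^i - v^i) \bigr) \,\mathrm{d}t,
\end{gather*}
and vertical variations $(\delta q, \delta v, \delta p)$ satisfy $\delta x^\sigma = 0$ identically.

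Second, I would compute $\mathrm{d}S(\delta q, \delta v, \delta p)$ by straightforward differentiation and then integrate by parts on the $p_i \delta \dot{y}^i$ term, using $\delta q(0) = \delta q(1) = 0$ to kill the boundary contribution. This yields
\begin{gather*}
 \mathrm{d}S(\delta q,\delta v,\delta p) = \int_0^1 \left[ \left( \frac{\partial L}{\partial y^i} - \dot{p}_i \right) \delta y^i + \left( \frac{\partial L}{\partial v^i} - p_i \right) \delta v^i + (\dot{y}^i - v^i) \delta p_i \right] \mathrm{d}t,
\end{gather*}
exactly as in the ordinary case but with the $y^i$ components playing the role of the full coordinates.

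Third, I would apply the fundamental lemma of the calculus of variations independently to $\delta y^i$, $\delta v^i$, and $\delta p_i$. The admissible variations decompose as free choices in these three components (with boundary conditions only on $\delta y^i$), since within a single fiber $Q_x$ the Pontryagin bundle $VQ \oplus V^\ast Q$ decomposes as $TQ_x \oplus T^\ast Q_x$ with independent factors. Hence vanishing of $\mathrm{d}S$ for all such variations is equivalent to the three relations $\dot{p}_i = \partial L/\partial y^i$, $p_i = \partial L/\partial v^i$, and $\dot{y}^i = v^i$. Together with the already-established $\dot{x}^\sigma = 0$ from verticality of the path, this gives~\eqref{eq:implicitVEL}.

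The only real subtlety, and the one place I would be careful, is the independence-of-variations step: one must confirm that an arbitrary triple $(\delta y, \delta v, \delta p)$ of smooth paths with $\delta y(0)=\delta y(1)=0$ actually arises as a vertical variation of $(q,v,p)$ in $\mathcal{P}_V(VQ \oplus V^\ast Q)$. This follows because, once $x$ is fixed, the problem reduces to variations of ordinary paths in the Pontryagin bundle of $Q_x$, where the claim is standard. Everything else is a mechanical reproduction of the computation in Section~\ref{sec:hpReview}.
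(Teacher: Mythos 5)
Your proposal is correct and follows essentially the same route as the paper's proof: identify $\dot{x}^\sigma=0$ as the vertical path condition, compute $\mathrm{d}S$ in fiber-adapted coordinates, integrate by parts, and conclude from the independence of $\delta y^i$, $\delta v^i$, $\delta p_i$. Your extra remark justifying that arbitrary triples of fiber variations are indeed realized in $V_{(q,v,p)}\mathcal{P}_V(VQ\oplus V^\ast Q)$ is a point the paper leaves implicit, and is a welcome addition.
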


\begin{proof} The equations $ \dot{x} ^\sigma = 0 $ are simply the vertical path condition. Given a vertical variation $ (\delta q, \delta v, \delta p ) \in V _{(q,v,p)} \mathcal{P} _V (V Q \oplus V ^\ast Q) $ satisfying $ \delta q (0) = 0 $ and $ \delta q (1) = 0 $,
 \begin{gather*}
 \mathrm{d} S ( \delta q, \delta v , \delta p ) = \int _0 ^1 \left( \frac{ \partial L }{ \partial y ^i } ( q, v ) \delta y ^i + \frac{ \partial L }{ \partial v ^i } ( q, v ) \delta v ^i + p _i ( \delta \dot{y} ^i - \delta v ^i ) + \delta p _i ( \dot{y} ^i - v ^i ) \right) \mathrm{d}t \\
\hphantom{\mathrm{d} S ( \delta q, \delta v , \delta p )}{} = \int _0 ^1 \left[ \left( \frac{ \partial
 L }{ \partial y ^i } ( q, v ) - \dot{p} _i \right) \delta y ^i + \left( \frac{ \partial L }{ \partial v ^i } ( q, v ) - p _i \right) \delta v ^i + \delta p _i ( \dot{y} ^i - v ^i ) \right] \mathrm{d}t .
 \end{gather*}
This vanishes for arbitrary $ ( \delta q, \delta v , \delta p ) $ if and only if each of the components in the integrand vanishes, which completes the proof.
\end{proof}

\subsection{Hamilton--Pontryagin for arbitrary Lie algebroids}\label{sec:hpAlgebroids}

We next generalize the Hamilton--Pontryagin principle to a Lagrangian $ L \colon A \rightarrow \mathbb{R} $, where $A \rightarrow Q $ is an arbitrary Lie algebroid. The previous subsections will then correspond to the special cases $ A = T Q $ and $ A = V Q $, respectively.

One might expect that the appropriate generalization of paths in $ T Q \oplus T ^\ast Q $ or $ V Q \oplus V ^\ast Q $ would be paths in $ A \oplus A ^\ast $. However, these generally do not contain suf\/f\/icient information to recover the $A$-path condition (the generalization of the second-order curve condition). Instead, we consider an alternative class of paths that we call $ ( A, A ^\ast ) $-paths.

\begin{Definition} An \emph{$ ( A, A ^\ast ) $-path} consists of the following components:
\begin{enumerate}[label=(\roman*)]\itemsep=0pt
\item an $A$-path $ a \in \mathcal{P} _\rho (A) $ over some base path $ q \in \mathcal{P} (Q) $;
\item a path $ v \in \mathcal{P} (A) $, not necessarily an $A$-path, over $q$;
\item a path $ p \in \mathcal{P} ( A ^\ast ) $ over $q$.
\end{enumerate}
We denote this by $ ( a, v, p ) \in \mathcal{P} ( A, A ^\ast ) $.
\end{Definition}

\begin{Example}
Any path $ ( q, v, p ) \in \mathcal{P} ( T Q \oplus T ^\ast Q ) $ can be identif\/ied with the $ ( T Q, T ^\ast Q ) $-path $ ( \dot{q}, v, p ) \in \mathcal{P} ( T Q , T ^\ast Q ) $. More generally, $ ( q, v, p ) \in \mathcal{P} _V ( V Q \oplus V ^\ast Q ) $ can be identif\/ied with $ ( \dot{q}, v, p ) \in \mathcal{P} ( V Q , V ^\ast Q ) $. Thus, $ \mathcal{P} ( VQ , V ^\ast Q ) \cong \mathcal{P} _V ( V Q \oplus V ^\ast Q ) $.

In this special case, the base path has a unique $A$-path prolongation, so it suf\/f\/ices to consider paths in $ A \oplus A ^\ast $~-- but this is not the case in general.
\end{Example}

\begin{Example} \label{ex:lieAlgebraPartI}
Let $ \mathfrak{g} $ be a Lie algebra. Since all paths in $\mathfrak{g} \rightarrow \bullet $ are $\mathfrak{g}$-paths, it follows that a~$ ( \mathfrak{g} , \mathfrak{g} ^\ast ) $ path $ ( a, v , p ) \in \mathcal{P} ( \mathfrak{g} , \mathfrak{g} ^\ast ) $ consists of two (generally distinct) paths $ a, v \in \mathcal{P} ( \mathfrak{g} ) $ and a path $ p \in \mathcal{P} ( \mathfrak{g} ^\ast ) $. Thus, $ \mathcal{P} ( \mathfrak{g} , \mathfrak{g} ^\ast ) \cong \mathcal{P} ( \mathfrak{g} \oplus \mathfrak{g} \oplus \mathfrak{g} ^\ast ) $.
\end{Example}

\begin{Definition}
An \emph{admissible variation} of $ ( a, v, p ) \in \mathcal{P} ( A, A ^\ast ) $ consists of an admissible variation $ X _{ b, a } \in \mathcal{F} _a (A) $ of the $A$-path $a$, together with arbitrary variations $ \delta v \in T _v \mathcal{P} ( A ) $ and $ \delta p \in T _p \mathcal{P} ( A ^\ast ) $, such that all agree on the horizontal component $ \delta q = \rho (b) \in \mathcal{P} _q (Q) $. That is, if $ \tau \colon A \rightarrow Q $ and $ \pi \colon A ^\ast \rightarrow Q $ are the bundle projections, we require $ \tau _\ast (v) = \pi _\ast (p) = \rho (b) $. Following Remark~\ref{rmk:admissibleSubbundle}, we denote this subbundle of admissible variations by $ \mathcal{F} (A, A ^\ast ) \subset T \mathcal{P} ( A, A ^\ast ) $.
\end{Definition}

\begin{Remark}
Given a $ T Q $-connection $ \nabla $, the admissible variation $ ( X _{ b, a } , \delta v, \delta p ) \in \mathcal{F} _{ ( a, v, p ) } ( A , A ^\ast ) $ has components $ X _{ b, a } ^\text{ver} = \overline{ \nabla } _a b $ and $ X _{ b, a } ^\text{hor} = \delta v ^\text{hor} = \delta p ^\text{hor} = \rho (b) $, while $ \delta v ^\text{ver} $ and $ \delta p ^\text{ver} $ are arbitrary paths in $A$ and $ A ^\ast $, respectively.
\end{Remark}

\begin{Example} Continuing Example~\ref{ex:lieAlgebraPartI}, let us consider the case where $ \mathfrak{g} $ is a Lie algebra. Given a~$ ( \mathfrak{g} , \mathfrak{g} ^\ast ) $ path $ ( a, v, p ) $, we identify $a$ with its time-dependent section $ \xi (t) = \xi ( t, \bullet ) = a (t) $. Then an admissible
 variation of $ ( a, v, p ) $ has the form $ ( \dot{ \xi } + [ \xi, \eta ] , \delta v , \delta p ) $, where $ \eta $, $ \delta v $, and $ \delta p $ are arbitrary paths in $ \mathfrak{g} $.

Equivalently, assuming $\mathfrak{g}$ is the Lie algebra of a Lie group $G$, let $ g \in \mathcal{P} (G) $ be a path integrating $\xi$, i.e., $ g (t) = g (0) \exp( t \xi ) $, so that $ \xi = ( g ^{-1} ) _\ast \dot{g} $. It follows that arbitrary variations $ ( \delta g , \delta v, \delta p ) \in T _{ (g,v,p) } \in \mathcal{P} ( G \times \mathfrak{g} \times \mathfrak{g} ^\ast ) $ correspond precisely to admissible variations $ ( \dot{ \xi } + [ \xi, \eta ] , \delta v , \delta p ) $ of $ ( \xi, v, p ) \in \mathcal{P} ( \mathfrak{g} , \mathfrak{g} ^\ast ) $, where $ \eta = ( g ^{-1} ) _\ast \delta g $. This special case corresponds to the approach of Yoshimura and Marsden~\cite{YoMa2007} and Bou-Rabee and Marsden~\cite{BoMa2009} for Hamilton--Pontryagin mechanics on Lie algebras, where one considers paths in $ \mathcal{P} ( G \times \mathfrak{g} \times \mathfrak{g} ^\ast ) $ and then left-reduces by $G$.
\end{Example}

\begin{Definition}
Given a Lagrangian $ L \colon A \rightarrow \mathbb{R} $, the \emph{Hamilton--Pontryagin action} $ S \colon \mathcal{P} ( A , A ^\ast ) \rightarrow \mathbb{R} $ is def\/ined by
 \begin{gather*}
 S ( a, v , p ) = \int _0 ^1 \bigl( L \bigl( v (t) \bigr) + \bigl\langle p (t) , a (t) - v (t) \bigr\rangle \bigr) \,\mathrm{d}t ,
 \end{gather*}
and $ (a,v,p) \in \mathcal{P} ( A , A ^\ast ) $ is said to satisfy the \emph{Hamilton--Pontryagin principle} if $ \mathrm{d} S ( X _{ b, a } , \delta v , \delta p ) = 0 $ for all admissible variations $ ( X _{ b, a }, \delta v , \delta p ) \in \mathcal{F} _{ (a,v,p) } ( A, A ^\ast ) $.
\end{Definition}

\begin{Theorem}An $ ( A, A ^\ast ) $-path $ ( a, v , p ) \in \mathcal{P} ( A, A ^\ast ) $ satisfies the Hamilton--Pontryagin principle if and only if, given a~$ T Q $-connection $ \nabla $ on $A$, it satisfies the differential-algebraic equations,
 \begin{gather} \label{eqn:implicitELP}
 \rho ^\ast \mathrm{d} L ^\textup{hor} (v) + \overline{ \nabla } _a ^\ast p = 0 , \qquad \mathrm{d} L ^\textup{ver} (v) - p = 0 , \qquad a - v = 0 .
 \end{gather}
\end{Theorem}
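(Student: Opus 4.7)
The plan is to follow the template of Theorem~\ref{thm:connectionELP}, extending the admissible-variation calculus from $A$-paths to $(A, A^\ast)$-paths. Fix a $TQ$-connection $\nabla$ on $A$. By the definition of $\mathcal{F}(A, A^\ast)$ above, an admissible variation $(X_{b,a}, \delta v, \delta p)$ has horizontal components all equal to $\rho(b)$, while the vertical components are $\overline{\nabla}_a b$ (by Definition~\ref{def:admissibleVariation}) together with arbitrary paths $\delta v^{\textup{ver}}$ in $A$ and $\delta p^{\textup{ver}}$ in $A^\ast$ over $q$; thus $b$ (subject to $b(0) = b(1) = 0$), $\delta v^{\textup{ver}}$, and $\delta p^{\textup{ver}}$ vary independently of one another.

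Next I would split $S = \int_0^1 L(v)\,\mathrm{d} t + \int_0^1 \langle p, a - v\rangle\,\mathrm{d} t$ and differentiate each summand. The first contribution is computed exactly as in Theorem~\ref{thm:connectionELP}, producing
\[
\int_0^1 \bigl( \langle \mathrm{d} L^{\textup{hor}}(v), \rho(b)\rangle + \langle \mathrm{d} L^{\textup{ver}}(v), \delta v^{\textup{ver}}\rangle\bigr)\,\mathrm{d} t.
\]
The second is the variation of a bilinear pairing between $A^\ast$ and $A$; in a local basis $\{e_I\}$ adapted to a locally trivial $\nabla$ (as in Theorem~\ref{thm:coordinateELP}) it reads $p_I(\xi^I - v^I)$ and depends on the base point only through the identification of the fibers, so its horizontal derivative vanishes. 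Its variation therefore reduces to
\[
\int_0^1 \bigl( \langle \delta p^{\textup{ver}}, a - v\rangle + \langle p, \overline{\nabla}_a b - \delta v^{\textup{ver}}\rangle\bigr)\,\mathrm{d} t.
\]

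I would then integrate by parts the $\langle p, \overline{\nabla}_a b\rangle$ term, using $b(0) = b(1) = 0$ to discard boundary contributions and obtain $\int_0^1 \langle \overline{\nabla}_a^\ast p, b\rangle\,\mathrm{d} t$. Collecting everything gives
\[
\mathrm{d} S = \int_0^1 \bigl( \langle \rho^\ast \mathrm{d} L^{\textup{hor}}(v) + \overline{\nabla}_a^\ast p, b\rangle + \langle \mathrm{d} L^{\textup{ver}}(v) - p, \delta v^{\textup{ver}}\rangle + \langle \delta p^{\textup{ver}}, a - v\rangle\bigr)\,\mathrm{d} t.
\]
Because $b$, $\delta v^{\textup{ver}}$, and $\delta p^{\textup{ver}}$ are independent, the fundamental lemma of the calculus of variations forces each of the three brackets to vanish pointwise in $t$, which is precisely~\eqref{eqn:implicitELP}.

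The main point to watch is the bookkeeping of horizontal components: since admissibility ties the horizontal parts of the three variations to the single object $\rho(b)$, one has to verify that the only surviving horizontal terms in $\mathrm{d} S$ come from $\mathrm{d} L^{\textup{hor}}(v)$ and from the $\overline{\nabla}_a b$ piece of the pairing, and in particular that the horizontal contributions coming from $\delta v$ and $\delta p$ through the pairing cancel. Once that cancellation is recorded, what remains is essentially a variational calculation with three independent Lagrange-multiplier-style variations, from which the three equations of~\eqref{eqn:implicitELP} drop out simultaneously.
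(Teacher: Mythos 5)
Your proposal is correct and follows essentially the same route as the paper: compute $\mathrm{d}S$ on an admissible variation, pass to the formal adjoints $\rho^\ast$ and $\overline{\nabla}_a^\ast$, and invoke the independence of $b$, $\delta v^{\textup{ver}}$, and $\delta p^{\textup{ver}}$ to force each of the three terms in the integrand to vanish. If anything, you are slightly more careful than the paper in recording why the horizontal contributions of $\delta v$ and $\delta p$ through the pairing $\langle p, a - v\rangle$ drop out, a point the published proof passes over silently.
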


\begin{proof} Given $ ( X _{ b, a } , \delta v, \delta p ) \in \mathcal{F} _{ (a,v,p) } ( A , A ^\ast ) $, we compute
 \begin{gather*}
 \mathrm{d} S ( X _{ b, a } , \delta v , \delta p ) = \int _0 ^1 \Bigl( \bigl\langle \mathrm{d} L ^\text{hor} (v) ,
 \rho (b) \bigr\rangle + \bigl\langle \mathrm{d} L ^\text{ver} (v), \delta v ^\text{ver} \bigr\rangle \\
 \hphantom{\mathrm{d} S ( X _{ b, a } , \delta v , \delta p ) = \int _0 ^1}{} + \langle p , \overline{ \nabla } _a
 b - \delta v ^\text{ver} \rangle + \langle \delta p^\text{ver} , a - v \rangle \Bigr) \mathrm{d}t\\
\hphantom{\mathrm{d} S ( X _{ b, a } , \delta v , \delta p )}{}
 = \int _0 ^1 \Bigl( \bigl\langle \rho ^\ast \mathrm{d} L ^\text{hor} (v) + \overline{ \nabla } _a ^\ast p , b
 \bigr\rangle + \bigl\langle \mathrm{d} L ^\text{ver} (v) - p , \delta v ^\text{ver} \bigr\rangle + \langle \delta p ^\text{ver}, a - v \rangle \Bigr)
\mathrm{d}t .
 \end{gather*}
 The Hamilton--Pontryagin principle is satisf\/ied if and only if each term in the integrand vanishes, and since $b$, $ \delta v ^\text{ver} $, and $ \delta p ^\text{ver} $ are arbitrary, the result follows.
\end{proof}

We call the dif\/ferential-algebraic equations \eqref{eqn:implicitELP} the \emph{implicit Euler--Lagrange--Poincar\'e equations}. As we did in Theorem~\ref{thm:coordinateELP} we can give an equivalent expression for~\eqref{eqn:implicitELP} in local coordinates.

\begin{Theorem}
Let $ q ^i $ be local coordinates for $Q$, $ \{ e _I \} $ be a local basis of sections of $A$, $ \{ e ^I \} $ be the dual basis of local sections of $ A ^\ast $, $ \nabla $ be the locally trivial $ T Q $-connection, and $ \rho ^i _I $ and $ C _{ I J } ^K $ be the local-coordinate representations of $ \rho $ and $ [\cdot , \cdot ] $. Let $ ( a, v, p ) \in \mathcal{P} ( A \oplus A \oplus A ^\ast ) $ have the local-coordinate representations $ a (t) = \xi ^I (t) e _I \bigl( q (t) \bigr) $, $ v (t) = v ^I (t) e _I \bigl( q (t) \bigr) $, and $ p (t) = p _I (t) e ^I \bigl( q (t) \bigr) $. Then $ ( a, v, p ) \in \mathcal{P} ( A , A ^\ast ) $ if and only if $ \dot{q} ^i = \rho ^i _I \xi ^I $, and $ ( a, v, p ) $ satisfies the implicit Euler--Lagrange--Poincar\'e equations~\eqref{eqn:implicitELP} if and only if
\begin{gather*}
\rho ^i _I \frac{ \partial L }{ \partial q ^i } - C ^K _{ I J } \xi ^J p _K - \dot{p} _I = 0 , \qquad
\frac{ \partial L }{ \partial \xi ^I } - p _I = 0 , \qquad \xi ^I - v ^I = 0 .
\end{gather*}
\end{Theorem}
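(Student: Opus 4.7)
The plan is to mirror the coordinate computation used in the proof of Theorem~\ref{thm:coordinateELP}, handling the three components of the implicit Euler--Lagrange--Poincar\'e equations~\eqref{eqn:implicitELP} separately. The $A$-path condition $\dot{q}^i = \rho^i_I \xi^I$ is identical to the one already established there: write $\dot{q} = \dot{q}^i \partial / \partial q^i$ and $\rho(a) = \rho^i_I \xi^I \partial/\partial q^i$ and match coefficients of $\partial / \partial q^i$. Note that $v$ does not enter this condition, since $v$ is not required to be an $A$-path.

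For the algebraic equation $ \mathrm{d} L ^\text{ver} (v) - p = 0 $, I would unpack both sides in the basis $\{ e^I \}$: exactly as in Theorem~\ref{thm:coordinateELP}, $ \mathrm{d} L ^\text{ver} (v) = (\partial L / \partial \xi^I)(v) e^I $, while $p = p_I e^I$, so equating $e^I$-coefficients yields $ \partial L / \partial \xi ^I - p_I = 0 $. The condition $a - v = 0$ is handled by inspecting the $e_I$-coefficients of $a = \xi^I e_I$ and $v = v^I e_I$, giving $\xi^I - v^I = 0$ directly.

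The one substantive step is the differential equation $ \rho ^\ast \mathrm{d} L ^\text{hor} (v) + \overline{ \nabla }_a ^\ast p = 0 $. Here I would reuse the coordinate computations already done in the proof of Theorem~\ref{thm:coordinateELP}: for the locally trivial $TQ$-connection, one has $ \overline{ \nabla } _a = - C _{ I J } ^K \xi ^J e ^I e _K + \mathrm{d}/\mathrm{d}t $, and $ \rho ^\ast \mathrm{d} L ^\text{hor} (v) = \rho ^i _I (\partial L / \partial q ^i)(v) \, e ^I $. Taking formal adjoints with respect to the natural pairing on $\mathcal{P}(A^\ast)$ against $\mathcal{P}(A)$, and applying $\overline{\nabla}_a^\ast$ to $p = p_K e^K$, one obtains $ \overline{ \nabla } _a ^\ast p = - C _{ I J } ^K \xi ^J p _K e ^I - \dot{p} _I e ^I $. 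Summing the two terms and extracting the $e^I$-coefficient produces the first listed equation.

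The only genuine obstacle is the adjoint computation for $\overline{\nabla}_a^\ast$, and this has already been carried out in the proof of Theorem~\ref{thm:coordinateELP}, with $p$ now playing the role previously played by $\mathrm{d} L^\text{ver}(a)$. Every other step is a direct coordinate unpacking, and each ``if and only if'' follows from the fact that a tensor expressed in a basis vanishes precisely when all of its coefficients do.
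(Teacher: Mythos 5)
Your proposal is correct and follows exactly the route the paper intends: the published proof consists only of the remark that it is ``a straightforward computation, following Theorem~\ref{thm:coordinateELP},'' and your write-up carries out precisely that computation, reusing the $A$-path coefficient matching, the expression $\overline{\nabla}_a = -C^K_{IJ}\xi^J e^I e_K + \mathrm{d}/\mathrm{d}t$ and its formal adjoint (with the $-\dot p_I$ arising from integration by parts against paths vanishing at the endpoints), and direct basis unpacking for the two algebraic conditions. No gaps.
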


\begin{proof} The proof is a straightforward computation, following Theorem~\ref{thm:coordinateELP}.
\end{proof}

\subsection{Reduction by groupoid symmetries}\label{sec:hpReduction}

Finally, we consider the reduction of Hamilton--Pontryagin mechanics by a Lie algebroid morphism $ \Phi \colon A \rightarrow A ^\prime $, as in Theorem~\ref{thm:weinsteinMartinez}. Here, though, we will require the slightly stronger assumption that $ \Phi $ be a f\/iberwise isomorphism. (This was actually assumed in the original Lie algebroid reduction theorem of Weinstein~\cite{Weinstein1996}, although Mart\'{\i}nez~\cite{Martinez2008} showed that it could be relaxed.) This stronger assumption is needed since $ \Phi ^\ast \colon A ^{ \prime \ast } \rightarrow A ^\ast $ points in the ``wrong direction'' for reduction from $ ( A, A ^\ast ) $ to $ ( A ^\prime , A ^{ \prime \ast } ) $, so we need f\/iberwise invertibility to map $ A ^\ast \rightarrow A ^{ \prime \ast }$.

\begin{Theorem} \label{thm:hpMorphismReduction}Let $ \Phi \colon A \rightarrow A ^\prime $ be a morphism of Lie algebroids, and suppose $ L \colon A \rightarrow \mathbb{R} $ and $ L ^\prime \colon A ^\prime \rightarrow \mathbb{R} $ are Lagrangians such that $ L = L ^\prime \circ \Phi $. If $ \Phi $ is a~fiberwise isomorphism, then $ ( a, v , p ) \in \mathcal{P} ( A , A ^\ast ) $ satisfies the Hamilton--Pontryagin principle for $L$ if and only if $ ( a ^\prime , v ^\prime , p ^\prime ) \in \mathcal{P} ( A ^\prime , A ^{\prime \ast} ) $ satisfies the Hamilton--Pontryagin principle for $L ^\prime $, where $ a ^\prime = \Phi \circ a $, $ v ^\prime = \Phi \circ v $, and $ p ^\prime = (\Phi ^\ast) ^{-1} \circ p $.
\end{Theorem}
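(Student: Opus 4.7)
My plan is to prove the theorem directly from the Hamilton--Pontryagin variational principle, avoiding explicit use of the implicit Euler--Lagrange--Poincar\'e equations~\eqref{eqn:implicitELP}. The central object is the bundle map
\begin{gather*}
\tilde{\Phi}\colon \mathcal{P}(A, A^\ast) \to \mathcal{P}(A^\prime, A^{\prime \ast}), \qquad (a, v, p) \mapsto (\Phi \circ a, \Phi \circ v, (\Phi^\ast)^{-1} \circ p),
\end{gather*}
which is well-defined because $\Phi$ sends $A$-paths to $A^\prime$-paths (being a Lie algebroid morphism), and because fiberwise invertibility of $\Phi$ makes $(\Phi^\ast)^{-1}$ a bundle map $A^\ast \to A^{\prime \ast}$ covering the base map $f\colon Q \to Q^\prime$ induced by~$\Phi$.

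First, I would verify that $S = S^\prime \circ \tilde{\Phi}$ by a direct computation: using $L = L^\prime \circ \Phi$ together with the adjointness relation $\langle (\Phi^\ast)^{-1} p, \Phi w\rangle = \langle p, w\rangle $ for $w \in A_{q(t)}$, the two integrands coincide pointwise, so the two actions agree on corresponding paths.

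Next, I would show that $\tilde{\Phi}$ induces a bijection between the admissible-variation spaces $\mathcal{F}_{(a,v,p)}(A, A^\ast)$ and $\mathcal{F}_{\tilde{\Phi}(a,v,p)}(A^\prime, A^{\prime \ast})$. The forward direction sends $(X_{b,a}, \delta v, \delta p)$ to $(X_{\Phi \circ b,\, \Phi \circ a},\, \Phi _\ast \delta v,\, ((\Phi^\ast)^{-1})_\ast \delta p)$; this is admissible because the Lie algebroid morphism identity $\rho^\prime \circ \Phi = f_\ast \circ \rho$ preserves the horizontal-component compatibility $\tau_\ast(\delta v) = \pi_\ast(\delta p) = \rho(b)$, and because $\Phi$ being linear on fibers ensures $\Phi \circ b$ still vanishes at $t=0,1$. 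For the inverse direction, given an admissible variation of $(a^\prime, v^\prime, p^\prime)$ along its base path $q^\prime = f \circ q$, I would use fiberwise invertibility of $\Phi$ (and hence of $\Phi^\ast$) to set $b(t) = \Phi^{-1}_{q(t)}(b^\prime(t))$ and, after fixing a $TQ$-connection $\nabla$ on~$A$ pulled back from a $TQ^\prime$-connection $\nabla^\prime$ on~$A^\prime$, similarly pull back the vertical components of $\delta v^\prime$ and $\delta p^\prime$ to obtain an admissible variation of $(a,v,p)$ mapping forward to the given one.

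Combining these two steps, for any admissible variation $(X_{b,a}, \delta v, \delta p)$ we have $\mathrm{d} S(X_{b,a}, \delta v, \delta p) = \mathrm{d} S^\prime\bigl(\tilde{\Phi}_\ast(X_{b,a}, \delta v, \delta p)\bigr)$ by the chain rule, and every admissible variation at $(a^\prime, v^\prime, p^\prime)$ arises this way. Thus $\mathrm{d} S$ vanishes on $\mathcal{F}_{(a,v,p)}(A, A^\ast)$ if and only if $\mathrm{d} S^\prime$ vanishes on $\mathcal{F}_{(a^\prime, v^\prime, p^\prime)}(A^\prime, A^{\prime \ast})$, which is exactly the stated equivalence. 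The main obstacle is the backward direction of the bijection in the third step: this is precisely where the stronger hypothesis of fiberwise isomorphism (rather than merely fiberwise surjectivity, as sufficed in Theorem~\ref{thm:weinsteinMartinez}) is indispensable, because without invertibility of $\Phi^\ast$ one cannot lift the momentum components $p^\prime$ and $\delta p^\prime$ back to well-defined objects on the $A$ side.
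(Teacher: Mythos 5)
Your proposal is correct, but it takes the route the paper explicitly mentions and then declines to carry out: the paper's actual proof states that the result ``can be shown directly from the variational principle'' and instead argues via the implicit Euler--Lagrange--Poincar\'e equations~\eqref{eqn:implicitELP}. Concretely, the paper observes that fiberwise invertibility of $\Phi$ makes the constraint $a=v$ and the Legendre-transform equation $p=\mathrm{d}L^{\text{ver}}(v)$ transfer back and forth (via a commuting square relating $\mathrm{d}L^{\text{ver}}$, $\mathrm{d}L^{\prime\,\text{ver}}$, $\Phi$, and $\Phi^\ast$), and then substitutes these into the first equation of~\eqref{eqn:implicitELP} to reduce the remaining equivalence to the ordinary Euler--Lagrange--Poincar\'e equations, which is exactly Theorem~\ref{thm:weinsteinMartinez}. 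Your variational argument is self-contained where the paper's is not: you must establish the correspondence of admissible variations yourself (the forward map $X_{b,a}\mapsto X_{\Phi\circ b,\,\Phi\circ a}$ and, crucially, the lift in the backward direction), which is precisely the work that the paper outsources to the cited Weinstein--Mart\'inez theorem; in exchange, your route makes completely transparent where the fiberwise-isomorphism hypothesis is indispensable, namely in lifting $p^\prime$, $\delta p^\prime$, and the vertical parts of $\delta v^\prime$ back to the $A$ side. Two small points to tighten: the identity $\tilde{\Phi}_\ast X_{b,a} = X_{\Phi\circ b,\,\Phi\circ a}$ deserves a citation to Crainic--Fernandes or a short verification rather than bare assertion, and ``pulling back'' a $TQ^\prime$-connection along $f\colon Q\to Q^\prime$ is not canonical in general --- but by Remark~\ref{rmk:admissibleSubbundle} the admissible-variation subspaces are connection-independent, so you can simply fix arbitrary connections on each side and phrase the lift in terms of the (connection-free) condition that the base variations of $\delta v$ and $\delta p$ equal $\rho(b)$.
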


\begin{proof}
This can be shown directly from the variational principle~-- observing that admissible variations in $ \mathcal{F} _{ ( a, v, p ) } ( A , A ^\ast ) $ map to those in $ \mathcal{F} _{ ( a ^\prime , v ^\prime , p ^\prime ) } ( A ^\prime , A ^{ \prime \ast } ) $, and vice versa~-- but we give an equivalent proof using the implicit Euler--Lagrange--Poincar\'e equations together with the Weinstein--Mart\'inez reduction theorem (Theorem~\ref{thm:weinsteinMartinez}).

First, since $ \Phi $ is a f\/iberwise isomorphism, we have $ a = v $ if and only if $ a ^\prime = v ^\prime $. Moreover, since $ L = L ^\prime \circ \Phi $, the following diagram commutes:
 \begin{equation*}
 \begin{tikzcd}
 A \ar[r, "\Phi", "\cong"'] \ar[d, "\mathrm{d} L ^\text{ver}"'] &
 A ^\prime \ar[d, "\mathrm{d} L ^{\prime \text{ver}}"]\\
 A ^\ast & \ar[l, "\Phi ^\ast ", "\cong"'] A ^{ \prime \ast }.
 \end{tikzcd}
 \end{equation*}
 It follows from this that $ p = \mathrm{d} L ^\text{ver} (v) $ if and only if $ p ^\prime = \mathrm{d} L ^{ \prime \text{ver}} ( v ^\prime ) $. Finally, substituting these expressions for $v$ and $p$ into the f\/irst equation in~\eqref{eqn:implicitELP}, we have
 \begin{gather*}
 \rho ^\ast \mathrm{d} L ^\textup{hor} (a) + \overline{ \nabla }_a ^\ast \mathrm{d} L ^\textup{ver} (a) = 0 , \qquad \rho ^{\prime \ast} \mathrm{d} L ^{\prime \textup{hor}} (a ^\prime ) + \overline{ \nabla } _{a ^\prime } ^{^\prime \ast} \mathrm{d} L ^{\prime \textup{ver}} (a ^\prime ) = 0 .
 \end{gather*}
But these are just the Euler--Lagrange--Poincar\'e equations \eqref{eqn:connectionELP} for $L$ and $ L ^\prime $, respectively. So Theorem~\ref{thm:weinsteinMartinez} implies that one holds if and only if the other does.
\end{proof}

Fortunately, the f\/iberwise isomorphism assumption is still suf\/f\/icient to perform reduction when $ A = V Q \rightarrow Q $ and $ A ^\prime = V Q / G \rightarrow Q / G $, since the quotient map for the groupoid action in Lemma~\ref{lem:quotientMorphism} is a f\/iberwise isomorphism. (Indeed, Higgins and Mackenzie~\cite{HiMa1993} refer to Lie algebroid morphisms with this property as \emph{action morphisms}.) Intuitively, this is because the quotient is taken both on the total space and on the base, so the dimension of the f\/ibers remains the same.

\begin{Theorem} \label{thm:hpGroupoidReduction}
 Let $ G \rightrightarrows M $ be a Lie groupoid and $ Q \rightarrow M $ a principal $G$-space. Suppose the Lagrangian $ L \colon V Q \rightarrow \mathbb{R} $ is $G$-invariant, i.e., that it factors through the quotient morphism $ \Phi \colon V Q \rightarrow V Q / G $ as $ L = \ell \circ \Phi $, where $ \ell \colon V Q / G \rightarrow \mathbb{R} $ is called the reduced Lagrangian. Then $ ( a, v, p ) \in \mathcal{P} ( V Q , V ^\ast Q ) $ satisfies the Hamilton--Pontryagin principle for $L$ if and only if $ ( a ^\prime , v ^\prime , p ^\prime ) \in \mathcal{P} ( V Q / G, V ^\ast Q / G ) $ satisfies the Hamilton--Pontryagin principle for $ \ell $, where $ a ^\prime = \Phi \circ a $, $ v ^\prime = \Phi \circ v $, and $ p ^\prime = (\Phi ^\ast) ^{-1} \circ p $.
\end{Theorem}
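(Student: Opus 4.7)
The plan is to deduce this theorem as an immediate consequence of Theorem~\ref{thm:hpMorphismReduction}, applied to the quotient Lie algebroid morphism $ \Phi \colon V Q \rightarrow V Q / G $ furnished by Lemma~\ref{lem:quotientMorphism}. Since $ L = \ell \circ \Phi $ by hypothesis, the only real content of the proof is to verify the one additional requirement of Theorem~\ref{thm:hpMorphismReduction} beyond those of Theorem~\ref{thm:weinsteinMartinez}: namely, that $ \Phi $ is a fiberwise isomorphism of vector bundles. Once this is in hand, $ \Phi ^\ast $ is also a fiberwise isomorphism, so $ ( \Phi ^\ast ) ^{-1} $ is well-defined and the correspondence $ ( a, v, p ) \leftrightarrow ( \Phi \circ a , \Phi \circ v , ( \Phi ^\ast ) ^{-1} \circ p ) $ makes sense.

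To check the fiberwise isomorphism property, I would examine $ \Phi $ on an individual fiber $ V _q Q $ of $ V Q \rightarrow Q $ and show that it is a linear isomorphism onto the corresponding fiber $ ( V Q / G ) _{ [q] } $ of $ V Q / G \rightarrow Q / G $. Linearity is automatic, since the $G$-action on $ V Q $ by pushforwards $ g _\ast $ is fiberwise linear, and so the quotient map is linear on $ V _q Q $. Surjectivity is also immediate: an element of $ ( V Q / G ) _{ [q] } $ is an orbit of vertical vectors based at points of the $G$-orbit through $q$, and any such orbit has a representative lying in $ V _q Q $. The substantive step is injectivity, which uses freeness of the $G$-action: if $ v, w \in V _q Q $ lie in a common orbit, say $ g _\ast v = w $ for some $ g \in G $, then $ g q = \tau ( g _\ast v ) = \tau ( w ) = q $, so by freeness $ g = \epsilon ( \mu (q) ) $, which forces $ v = w $.

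Having verified that $ \Phi $ is a fiberwise linear isomorphism, Theorem~\ref{thm:hpMorphismReduction} applies verbatim and yields the stated equivalence of Hamilton--Pontryagin solutions. I anticipate no serious obstacle: the remark immediately preceding the theorem statement already flags the fiberwise isomorphism property as the crux of the matter, explaining intuitively that because the quotient is taken on both the total space and the base, the fiber dimension is preserved. The proof is thus a concise appeal to Theorem~\ref{thm:hpMorphismReduction} once Lemma~\ref{lem:quotientMorphism} (morphism property) and the freeness argument above (fiberwise isomorphism) are combined.
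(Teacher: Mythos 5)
Your proposal is correct and follows essentially the same route as the paper, which likewise proves the theorem by applying Theorem~\ref{thm:hpMorphismReduction} to the quotient morphism of Lemma~\ref{lem:quotientMorphism} and noting that it is a fiberwise isomorphism. The only difference is that you spell out the linearity/surjectivity/injectivity-via-freeness verification of the fiberwise isomorphism property, which the paper merely asserts (with an intuitive justification and a pointer to Higgins and Mackenzie in the preceding paragraph); your argument for that step is sound.
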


\begin{proof} Apply Theorem~\ref{thm:hpMorphismReduction} to the quotient morphism~$\Phi$, which is a f\/iberwise-isomorphic Lie algebroid morphism from $ V Q $ to $ V Q / G $.
\end{proof}

\begin{Example}As in Example~\ref{ex:groupActionReduction}, when $ G \rightrightarrows \bullet $ is a Lie group acting freely and properly on $ Q \rightarrow \bullet $, this corresponds to the case of ordinary Lagrangian reduction for the Hamilton--Pontryagin principle. In the special case where $ Q = G $ acts on itself by multiplication, this gives Euler--Poincar\'e-type reduction for the Hamilton--Pontryagin principle, as in Yoshimura and Marsden~\cite{YoMa2007}, Bou-Rabee and Marsden~\cite{BoMa2009}.
\end{Example}

\begin{Example}As in Example~\ref{ex:groupoidSelfReduction}, suppose $ G \rightrightarrows M $ is a Lie groupoid acting on itself by multiplication, so that the quotient morphism is $ \Phi \colon V ^\beta G \rightarrow V ^\beta G / G = A G $. If $ L \colon V ^\beta G \rightarrow \mathbb{R} $ and $ \ell \colon A G \rightarrow \mathbb{R} $ are Lagrangians satisfying $ L = \ell \circ \Phi $, then Theorem~\ref{thm:hpGroupoidReduction} implies that the implicit vertical Euler--Lagrange equations \eqref{eq:implicitVEL} on $ V ^\beta G $ reduce to the implicit Euler--Lagrange--Poincar\'e equations \eqref{eqn:implicitELP} on $ A G $. The even more special case where $ G \rightrightarrows \bullet $ is a Lie group again gives Hamilton--Pontryagin reduction from $G$ to $ \mathfrak{g} $, as in Yoshimura and Marsden~\cite{YoMa2007}, Bou-Rabee and Marsden~\cite{BoMa2009}.
\end{Example}

\subsection*{Acknowledgments and disclosures}
The authors wish to thank Rui Loja Fernandes for his helpful feedback on this work. This paper also benef\/ited substantially from the suggestions of the anonymous referees, to whom we wish to express our sincere gratitude.
This research was supported in part by grants from the Simons Foundation (award 279968 to Ari Stern) and from the National Science Foundation (award DMS~1363250 to Xiang Tang).
The authors declare that they have no conf\/lict of interest.

\pdfbookmark[1]{References}{ref}
\LastPageEnding


\begin{thebibliography}{99}
\footnotesize\itemsep=0pt

\bibitem{ArKoNe1988}
Arnold V.I., Kozlov V.V., Neishtadt A.I., Dynamical systems.~{III},
 \href{https://doi.org/10.1007/978-3-662-02535-2}{\textit{Encyclopaedia of Mathematical Sciences}}, Vol.~3, Springer-Verlag,
 Berlin, 1988.

\bibitem{BoMa2009}
Bou-Rabee N., Marsden J.E., Hamilton--{P}ontryagin integrators on {L}ie groups.
 {I}.~{I}ntroduction and structure-preserving properties, \href{https://doi.org/10.1007/s10208-008-9030-4}{\textit{Found.
 Comput. Math.}} \textbf{9} (2009), 197--219.

\bibitem{CeMaRa2001}
Cendra H., Marsden J.E., Ratiu T.S., Lagrangian reduction by stages,
 \href{https://doi.org/10.1090/memo/0722}{\textit{Mem. Amer. Math. Soc.}} \textbf{152} (2001), x+108~pages.

\bibitem{CoLeMaMaMa2006}
Cort\'es J., de~Le\'on M., Marrero J.C., Mart\'{\i}n~de Diego D., Mart\'{\i}nez
 E., A survey of {L}agrangian mecha\-nics and control on {L}ie algebroids and
 groupoids, \href{https://doi.org/10.1142/S0219887806001211}{\textit{Int.~J. Geom. Methods Mod. Phys.}} \textbf{3} (2006),
 509--558, \mbox{\href{http://arxiv.org/abs/math-ph/0511009}{math-ph/0511009}}.

\bibitem{CoMa2004}
Cort\'es J., Mart\'{\i}nez E., Mechanical control systems on {L}ie algebroids,
 \href{https://doi.org/10.1093/imamci/21.4.457}{\textit{IMA~J. Math. Control Inform.}} \textbf{21} (2004), 457--492,
 \href{http://arxiv.org/abs/#2}{math.OC/0402437}.

\bibitem{CrFe2003}
Crainic M., Fernandes R.L., Integrability of {L}ie brackets, \href{https://doi.org/10.4007/annals.2003.157.575}{\textit{Ann. of
 Math.}} \textbf{157} (2003), 575--620, \href{http://arxiv.org/abs/math.DG/0105033}{math.DG/0105033}.

\bibitem{LeMaMa2005}
de~Le\'on M., Marrero J.C., Mart\'{\i}nez E., Lagrangian submanifolds and
 dynamics on {L}ie algebroids, \href{https://doi.org/10.1088/0305-4470/38/24/R01}{\textit{J.~Phys.~A: Math. Gen.}} \textbf{38}
 (2005), R241--R308, \href{http://arxiv.org/abs/math.DG/0407528}{math.DG/0407528}.

\bibitem{DuZu2005}
Dufour J.-P., Zung N.T., Poisson structures and their normal forms,
 \href{https://doi.org/10.1007/b137493}{\textit{Progress in Mathematics}}, Vol.~242, Birkh\"auser Verlag, Basel, 2005.

\bibitem{GrGr2008}
Grabowska K., Grabowski J., Variational calculus with constraints on general
 algebroids, \href{https://doi.org/10.1088/1751-8113/41/17/175204}{\textit{J.~Phys.~A: Math. Theor.}} \textbf{41} (2008), 175204,
 25~pages, \href{http://arxiv.org/abs/0712.2766}{arXiv:0712.2766}.

\bibitem{GrUrGr2006}
Grabowska K., Urba\'nski P., Grabowski J., Geometrical mechanics on algebroids,
 \href{https://doi.org/10.1142/S0219887806001259}{\textit{Int.~J. Geom. Methods Mod. Phys.}} \textbf{3} (2006), 559--575,
 \href{http://arxiv.org/abs/math-ph/0509063}{math-ph/0509063}.

\bibitem{HiMa1993}
Higgins P.J., Mackenzie K.C.H., Duality for base-changing morphisms of vector
 bundles, modules, {L}ie algebroids and {P}oisson structures, \href{https://doi.org/10.1017/S0305004100071760}{\textit{Math.
 Proc. Cambridge Philos. Soc.}} \textbf{114} (1993), 471--488.

\bibitem{IgMaMaMa2008}
Iglesias D., Marrero J.C., Mart\'{\i}n~de Diego D., Mart\'{\i}nez E., Discrete
 nonholonomic {L}agrangian systems on {L}ie groupoids, \href{https://doi.org/10.1007/s00332-007-9012-8}{\textit{J.~Nonlinear
 Sci.}} \textbf{18} (2008), 221--276, \href{http://arxiv.org/abs/0704.1543}{arXiv:0704.1543}.

\bibitem{IgMaMaSo2008}
Iglesias D., Marrero J.C., Mart\'{\i}n~de Diego D., Sosa D., Singular
 {L}agrangian systems and variational constrained mechanics on {L}ie
 algebroids, \href{https://doi.org/10.1080/14689360802294220}{\textit{Dyn. Syst.}} \textbf{23} (2008), 351--397,
 \href{http://arxiv.org/abs/0706.2789}{arXiv:0706.2789}.

\bibitem{Mackenzie2005}
Mackenzie K.C.H., General theory of {L}ie groupoids and {L}ie algebroids,
 \href{https://doi.org/10.1017/CBO9781107325883}{\textit{London Mathematical Society Lecture Note Series}}, Vol.~213, Cambridge
 University Press, Cambridge, 2005.

\bibitem{MaMaMa2006}
Marrero J.C., Mart\'{\i}n~de Diego D., Mart\'{\i}nez E., Discrete {L}agrangian
 and {H}amiltonian mechanics on {L}ie groupoids, \href{https://doi.org/10.1088/0951-7715/19/6/006}{\textit{Nonlinearity}}
 \textbf{19} (2006), 1313--1348, \href{http://arxiv.org/abs/math.DG/0506299}{math.DG/0506299}.

\bibitem{MaMaSt2015}
Marrero J.C., Mart\'{\i}n~de Diego D., Stern A., Symplectic groupoids and
 discrete constrained {L}agrangian mechanics, \href{https://doi.org/10.3934/dcds.2015.35.367}{\textit{Discrete Contin. Dyn.
 Syst.}} \textbf{35} (2015), 367--397, \href{http://arxiv.org/abs/1103.6250}{arXiv:1103.6250}.

\bibitem{MaMiOrPeRa2007}
Marsden J.E., Misio{\l}ek G., Ortega J.P., Perlmutter M., Ratiu T.S.,
 Hamiltonian reduction by stages, \href{https://doi.org/10.1007/978-3-540-72470-4}{\textit{Lecture Notes in Mathematics}}, Vol.~1913, Springer, Berlin, 2007.

\bibitem{MaRa1999}
Marsden J.E., Ratiu T.S., Introduction to mechanics and symmetry. A~basic
 exposition of classical mechanical systems, \href{https://doi.org/10.1007/978-0-387-21792-5}{\textit{Texts in Applied
 Mathematics}}, Vol.~17, 2nd ed., Springer-Verlag, New York, 1999.

\bibitem{MaRaSc2000}
Marsden J.E., Ratiu T.S., Scheurle J., Reduction theory and the
 {L}agrange--{R}outh equations, \href{https://doi.org/10.1063/1.533317}{\textit{J.~Math. Phys.}} \textbf{41} (2000),
 3379--3429.

\bibitem{MaSc1993}
Marsden J.E., Scheurle J., Lagrangian reduction and the double spherical
 pendulum, \href{https://doi.org/10.1007/BF00914351}{\textit{Z.~Angew. Math. Phys.}} \textbf{44} (1993), 17--43.

\bibitem{MaSc1993b}
Marsden J.E., Scheurle J., The reduced {E}uler--{L}agrange equations, in
 Dynamics and Control of Mechanical Systems ({W}aterloo, {ON}, 1992),
 \textit{Fields Inst. Commun.}, Vol.~1, Amer. Math. Soc., Providence, RI,
 1993, 139--164.

\bibitem{MaWe2001}
Marsden J.E., West M., Discrete mechanics and variational integrators,
 \href{https://doi.org/10.1017/S096249290100006X}{\textit{Acta Numer.}} \textbf{10} (2001), 357--514.

\bibitem{Martinez2001}
Mart\'{\i}nez E., Lagrangian mechanics on {L}ie algebroids, \href{https://doi.org/10.1023/A:1011965919259}{\textit{Acta Appl.
 Math.}} \textbf{67} (2001), 295--320.

\bibitem{Martinez2007}
Mart\'{\i}nez E., Lie algebroids in classical mechanics and optimal control,
 \href{https://doi.org/10.3842/SIGMA.2007.050}{\textit{SIGMA}} \textbf{3} (2007), 050, 17~pages, \href{http://arxiv.org/abs/math-ph/0703062}{math-ph/0703062}.

\bibitem{Martinez2008}
Mart\'{\i}nez E., Variational calculus on {L}ie algebroids, \href{https://doi.org/10.1051/cocv:2007056}{\textit{ESAIM
 Control Optim. Calc. Var.}} \textbf{14} (2008), 356--380,
 \href{http://arxiv.org/abs/math-ph/0603028}{math-ph/0603028}.

\bibitem{Montgomery1990}
Montgomery R., Isoholonomic problems and some applications, \href{https://doi.org/10.1007/BF02096874}{\textit{Comm. Math.
 Phys.}} \textbf{128} (1990), 565--592.

\bibitem{MoVe1991}
Moser J., Veselov A.P., Discrete versions of some classical integrable systems
 and factorization of matrix polynomials, \href{https://doi.org/10.1007/BF02352494}{\textit{Comm. Math. Phys.}}
 \textbf{139} (1991), 217--243.

\bibitem{Routh1860}
Routh E.J., An elementary treatise on the dynamics of a system of rigid bodies.
 With numerous examples, Macmillan, Cambridge, 1860.

\bibitem{Stern2010}
Stern A., Discrete {H}amilton--{P}ontryagin mechanics and generating functions
 on {L}ie groupoids, \href{https://doi.org/10.4310/JSG.2010.v8.n2.a5}{\textit{J.~Symplectic Geom.}} \textbf{8} (2010), 225--238,
 \href{http://arxiv.org/abs/0905.4318}{arXiv:0905.4318}.

\bibitem{Weinstein1996}
Weinstein A., Lagrangian mechanics and groupoids, in Mechanics Day ({W}aterloo,
 {ON}, 1992), \textit{Fields Inst. Commun.}, Vol.~7, Amer. Math. Soc.,
 Providence, RI, 1996, 207--231.

\bibitem{Wong1970}
Wong S.K., Field and particle equations for the classical {Y}ang--{M}ills f\/ield
 and particles with isotopic spin, \href{https://doi.org/10.1007/BF02892134}{\textit{Il~Nuovo Cimento~A}} \textbf{65}
 (1970), 689--694.

\bibitem{YoMa2006a}
Yoshimura H., Marsden J.E., Dirac structures in {L}agrangian mechanics.
 {I}.~{I}mplicit {L}agrangian systems, \href{https://doi.org/10.1016/j.geomphys.2006.02.009}{\textit{J.~Geom. Phys.}} \textbf{57}
 (2006), 133--156.

\bibitem{YoMa2006b}
Yoshimura H., Marsden J.E., Dirac structures in {L}agrangian mechanics.
 {II}.~{V}ariational structures, \href{https://doi.org/10.1016/j.geomphys.2006.02.012}{\textit{J.~Geom. Phys.}} \textbf{57} (2006),
 209--250.

\bibitem{YoMa2007}
Yoshimura H., Marsden J.E., Reduction of {D}irac structures and the {H}amilton--{P}ontryagin principle, \href{https://doi.org/10.1016/S0034-4877(08)00004-9}{\textit{Rep. Math. Phys.}} \textbf{60}
 (2007), 381--426.

\end{thebibliography}
\end{document}